\documentclass{amsart}
\usepackage{graphicx}
\usepackage{amssymb}
\usepackage{epstopdf}
\usepackage{nicefrac}
\usepackage{enumerate}
\usepackage{hyperref}
\usepackage{float}
\usepackage{color}
\usepackage{pst-all}
\usepackage{tabularx}

\usepackage{graphicx}

\input xy
\xyoption {all}

\newtheorem{thm}{THEOREM}[section]
\newtheorem{conj}[thm]{CONJECTURE}
\newtheorem{cor}[thm]{COROLLARY}

\newtheorem{defn}[thm]{DEFINITION}
\newtheorem{ex}[thm]{EXAMPLE}

\newtheorem{lemma}[thm]{LEMMA}
\newtheorem{problem}[thm]{PROBLEM}
\newtheorem{prop}[thm]{PROPOSITION}

\newtheorem{remark}[thm]{REMARK}

\newcommand{\ds}{\displaystyle}

\newcommand{\cD}{{\mathcal D}}
\newcommand{\cE}{{\mathcal E}}
\newcommand{\cG}{{\mathcal G}}
\newcommand{\cH}{{\mathcal H}}

\newcommand{\cO}{{\mathcal O}}
\newcommand{\cP}{{\mathcal P}}

\newcommand{\e}{{\varepsilon}} 
\newcommand{\fA}{{\mathfrak{A}}}
\newcommand{\fD}{{\mathfrak{D}}}

\newcommand{\fG}{{\mathfrak{G}}}
\newcommand{\fI}{{\mathfrak{I}}}

\newcommand{\fM}{\mathfrak{M}}
\newcommand{\Fix}{{\rm Fix}} 
\newcommand{\fN}{{\mathfrak{N}}}
\newcommand{\fQ}{{\mathfrak{Q}}}
\newcommand{\fR}{{\mathfrak{R}}}
\newcommand{\fS}{{\mathfrak{S}}}

\newcommand{\fX}{{\mathfrak{X}}}

\newcommand{\G}{\Gamma}
\newcommand{\F}{{\mathcal F}}

\newcommand{\Aut}{{\rm Aut}} 
\newcommand{\Homeo}{{\rm Homeo}}

\newcommand{\Iso}{{\rm Iso}} 

\newcommand{\mQ}{{\mathbb Q}}
\newcommand{\mR}{{\mathbb R}}
\newcommand{\mS}{{\mathbb S}}
\newcommand{\mT}{{\mathbb T}}
\newcommand{\mZ}{{\mathbb Z}}
\newcommand{\oH}{\overline{H}}
\newcommand{\oG}{\overline{\Gamma}}
\newcommand{\ophi}{\overline{\phi}}
\newcommand{\oPhi}{\overline{\Phi}}

\newcommand{\vp}{{\varphi}}

\newcommand{\whg}{\widehat{g}}

\newcommand{\whh}{\widehat{h}}
\newcommand{\whk}{\widehat{k}}
\newcommand{\whp}{\widehat{p}}
\newcommand{\whq}{\widehat{q}}
\newcommand{\whx}{\widehat{x}}
\newcommand{\why}{\widehat{y}}
\newcommand{\whz}{\widehat{z}}

\newcommand{\whGamma}{\widehat{\Gamma}}

\newcommand{\whG}{\widehat{\Gamma}}

\newcommand{\whM}{\widehat{M}}

\newcommand{\whmZ}{\widehat{\mZ}}
\newcommand{\whphi}{\widehat{\phi}}

\newcommand{\whmT}{{\widehat{\mT}}}

\newcommand{\ok}{\overline{k}}

\newcommand{\obeta}{\overline{\beta}}
\newcommand{\ogamma}{\overline{\gamma}}
\newcommand{\omu}{\overline{\mu}}

\newcommand{\FP}{{\mathcal F}_{\cP}} 

\newcommand{\MCG}{{\rm Mod}}
\newcommand{\Comm}{{\rm Comm}}

\newcommand{\whpsi}{{\widehat{\psi}}}
\newcommand{\whvp}{{\widehat{\varphi}}}

\newcommand{\wtgamma}{\widetilde{\gamma}}

\newcommand{\wtM}{\widetilde{M}}

\newcommand{\wth}{\widetilde{h}}
\newcommand{\wtx}{\widetilde{x}}

\newcommand{\wtH}{\widetilde{H}}

\newcommand\mor{\mathrel{\stackrel{\makebox[0pt]{\mbox{\normalfont\tiny a}}}{\sim}}}
\newcommand\comm{\mathrel{\stackrel{\makebox[0pt]{\mbox{\normalfont\tiny c}}}{\sim}}}

\begin{document}

\title{Mapping class groups  of solenoidal manifolds} \thanks{This manuscript was typeset and   spell-checked using LaTeX, and references found using the arXiv and MathSciNet. No other computer resources or AI programs  were used in its preparation. \\
The authors are grateful  to Daniel Studenmund for   helpful conversations.\\
OL is partially supported by the Dutch Research Council (NWO) under the grant OCENW.M.24.200 (grant ID https://doi.org/10.61686/WMQWN26662)}  
 
\author{Steven Hurder}
\address{Steven Hurder, Department of Mathematics, University of Illinois at Chicago, 322 SEO (m/c 249), 851 S. Morgan Street, Chicago, IL 60607-7045}
\email{hurder@uic.edu}

\author{Olga Lukina}
\address{Olga Lukina, Mathematical Institute, 
  Leiden University,
P.O. Box 9512,
2300 RA Leiden,
The Netherlands}
\email{o.lukina@math.leidenuniv.nl}

\thanks{Version date: October 10, 2026}

\thanks{2020 {\it Mathematics Subject Classification}.   
  Primary: 20E18, 20F28 , 37B05  ;  Secondary: 20F65, 57K20, 57S25}
 
\thanks{Keywords: odometer actions, mapping class group, solenoidal manifold, foliated spaces,   virtual automorphism, abstract commensurator.}

\begin{abstract} The goal of this work is to calculate the   mapping class groups of  solenoidal manifolds. We introduce  the dynamical commensurator group associated to an odometer action, and develop   methods to calculate the mapping class group using the dynamical commensurator.     Our results generalize the previous results of   Odden for the mapping class group of the universal solenoid over a Riemann surface, and  of   Bering and Studenmund for the pointed mapping class group of the universal solenoid over a compact aspherical manifold.  We include a selection of examples.
\end{abstract}

\maketitle

\section{Introduction}\label{sec-intro}

 Let $\fM$ be a  compact connected metric space, and $\Homeo(\fM)$ denote its group of homeomorphisms, equipped with the compact open topology. The group $\Homeo(\fM)$ is huge (unless $\fM$ is a singleton) and   understanding it is a challenging  problem.
 Fortunately, there is a reduction of this problem to the one which is not just manageable, but reveals many interesting phenomena.  
 
 Let $\Homeo_0(\fM)$  denote the   connected component of the identity in $\Homeo(\fM)$, which  is a closed normal subgroup. 
 The \emph{mapping class group} of $\fM$ is   the quotient 
 \begin{equation}
\MCG(\fM) = \Homeo(\fM)/\Homeo_0(\fM)   ~ .
\end{equation}
When $\fM = M$ is a closed manifold,   $\MCG(M)$ is   well-understood in special cases.  For example,  the  mapping class groups   $\MCG(\mS^2)$ and $\MCG(\mT^2)$ are completely known.  For a closed surface $\Sigma_g$ of genus $g \geq  2$, the structure of $\MCG(\Sigma_g)$ is far more complex, and has been extensively investigated, as described for example in the treatise  
\cite{FarbMargalit2012}.

The mapping class groups  for  shift spaces derived from dynamical systems   were studied by   Boyle and Chuysurichay \cite{BoyleChuysurichay2018}, and   also  later by Schmiedling and Yang \cite{SchmiedlingYang2021}.  They studied $\MCG(\Sigma_T \fX)$, where   $\Sigma_T \fX$ is the suspension   of a  non-trivial irreducible shift $T \colon \fX \to \fX$ of finite type,  and   obtained many remarkable properties of these groups.

In this work, we study $\MCG(\fM_{\cP})$   where $\fM_{\cP}$ is a \emph{solenoidal manifold}, which is homeomorphic to the  the suspension of an equicontinuous minimal action of a group $\G$ on a Cantor set $\fX$. Recall that a  solenoidal manifold   is  defined as the inverse limit of   a tower of proper coverings,
$$
   \fM_{\cP} \equiv  \varprojlim \,\{\, p_{\ell} \colon  M_{\ell} \to M_{\ell -1} \mid  \ell \geq 1\} ~  .
$$
    Here, each $M_\ell$ is a compact connected manifold of dimension $n$, and $p_{\ell}$ is a  covering map of finite degree greater than one. Note that the maps $p_{\ell}$ are not assumed to be regular (or normal) coverings. The collection $\cP = \{p_{\ell} \colon  M_{\ell} \to M_{\ell -1} \mid  \ell \geq 1\}$ is called a \emph{presentation} of the solenoid.  The   manifold $M_0$ is called the \emph{base}, and the   solenoid $ \fM_{\cP}$ is said to have dimension $n$. McCord showed in \cite{McCord1965} that such  $\fM_{\cP}$ is a foliated space   in the sense of \cite{MS2006}, with foliation $\F_{\cP}$ whose leaves are its path connected components  and have dimension $n$.

For a closed surface $\Sigma_g$ of genus $g \geq 2$, Sullivan considered the space   $\widehat{\Sigma_g}$ formed from the inverse limit of all finite coverings of $\Sigma_g$. This  is called the \emph{universal hyperbolic solenoid}. The space of hyperbolic metrics on the leaves of the   foliation by surfaces of $\widehat{\Sigma_g}$ gives rise to  the universal Teichm\"{u}ller space, which was introduced and studied in the   works of  Biswas, Nag and Sullivan   \cite{BNS1996,BiswasNag1998,BiswasNag2000,Sullivan1993}.  Odden   calculated the group $\MCG(\widehat{\Sigma_g})$ completely in \cite{Odden2005}.

A key idea introduced in the work of Odden is to consider the subgroup $\MCG(\fM_{\cP} , \whx) \subset  \MCG(\fM_{\cP})$
of isotopy classes of homeomorphisms that fix a basepoint $\whx \in \fM_{\cP}$. The leaf $L_{\whx} \subset \fM_{\cP}$ of the foliation of $\fM_{\cP}$ which contains $\whx$ is called the \emph{baseleaf}, and is   invariant under   $h \in \Homeo(\fM_{\cP}, \whx)$, so one can consider the restricted homeomorphism $h \colon L_{\whx} \to L_{\whx}$. If one considers the foliation $\F_{\cP}$ as a dynamical system, then $h \colon  L_{\whx} \to L_{\whx}$ can be considered as a ``non-linear time change'' for an orbit of the dynamical system. Correspondingly,   the map $h \in \MCG(\fM_{\cP} , \whx)$ is a ``global time change''. The strategy for analyzing $\MCG(\fM_{\cP})$ is then to first understand the group $\MCG(\fM_{\cP} , \whx)$ generated by the ``time changes'', and then use these results to calculate $\MCG(\fM_{\cP})$.

Belk and Forrest \cite{BelkForrest2014} and  Bering and Studenmund \cite{BeringStudenmund2024}   considered the group of pointed homotopy self-equivalences  $ \cE(\whM , \whx)$,  where $\whM$ is   the universal solenoidal manifold   over a compact connected $M$.   That is,  $\whM$ is the inverse limit of all finite coverings of $M$. For a manifold $M$ which has contractible universal cover, they relate the group $ \cE(\whM , \whx)$    with the abstract commensurator group $\Comm(\G)$, for $\G = \pi_1(M,x)$. Their work is one of the  motivations for this work, in which we address:
 \begin{problem}\label{prob-basic0} Let $M_0$ be a closed manifold, and let $\fM_{\cP}$ be a solenoidal manifold over $M_0$ with a presentation $\cP$ and a choice of basepoint $\whx \in \fM_{\cP}$. 
Give an algebraic model for $\MCG(\fM_{\cP}, \whx)$   which is    computable in terms of the   topology of $M_0$ and the   properties of the tower of coverings  $\cP$, and use this to calculate $\MCG(\fM_{\cP})$.
  \end{problem}

Recall that a topological space $X$ is \emph{homogeneous} if for every pair of points $x,y \in X$ there exists $h \in \Homeo(X)$ such that $h(x) = y$. 
  Fokkink and Oversteegen showed in \cite{FO2002} that $\fM_{\cP}$ is a homogeneous space  exactly when   the fiber $\fX_0$ of the fibration $\whq_0 \colon \fM_{\cP} \to M_0$  is a profinite group $\fG$, and the monodromy of the fibration $\whq_0$ is via a group representation into $\fG$. In particular, the universal solenoid is always a homogeneous space, see Section \ref{subsec-universal}. Theorem~\ref{thm-homogeneous} and Corollary~\ref{cor-homogeneous} give the      solutions to Problem~\ref{prob-basic0} when   $\fM_{\cP}$ is a homogeneous space.

 When   $\fM_{\cP}$ is  not homogeneous, the study of its  mapping class group  $\MCG(\fM_{\cP})$  requires making 
   assumptions on the dynamical properties of the monodromy of the fibration $\fM_{\cP} \to M_0$.
     In addition to the ideas introduced in the   works \cite{Odden2005,BelkForrest2014,BeringStudenmund2024}, the analysis of this  general case introduces  the \emph{dynamical commensurator group} for a generalized  odometer action $\Phi \colon \G \times \fX \to \fX$, and   develops  its relation to the abstract  commensurator group $\Comm(\G)$  using ideas from the   dynamics of   Cantor actions  \cite{CHL2019a,DHL2016a,DHL2016c}.   Various partial solutions to Problem~\ref{prob-basic0} for non-homogeneous solenoidal manifolds  are developed in this work, in particular as given   in Theorem~\ref{thm-nonhomogeneous} and Corollary~\ref{cor-nonhomogeneous}.

\section{Statement of Results} \label{sec-results}

   Let $M_0$ be a compact connected manifold without boundary, and set  $\G = \pi_1(M_0,x_0)$ for a choice of basepoint $x_0 \in M_0$. 
 A  \emph{group chain} in $\G$ is a  strictly descending chain of subgroups of finite index,  $\cG = \{\G = \G_0 \supset \G_1 \supset \G_2 \supset \cdots\}$.
 The group chain $\cG$ and basepoint $x_0$ determine a \emph{tower} of finite index coverings of $M_0$ by closed manifolds
\begin{equation}\label{eq-presentation}
\cP_{\cG}  \equiv ~  \left\{ M_0   \stackrel{p_1}{\longleftarrow}    M_1    \stackrel{p_2}{\longleftarrow}  M_2   \stackrel{p_3}{\longleftarrow}      \cdots  \right\},
\end{equation}
where $M_0$ is called the base of the tower. The inverse limit space 
\begin{equation}\label{eq-invlimit}
\fM_{\cP} = \lim_{\longleftarrow} ~ \{p_{\ell+1} \colon M_{\ell +1} \to M_{\ell}\}  \subset \prod_{\ell \geq 0} ~ M_{\ell}
\end{equation}
is called  a \emph{weak solenoid} by McCord \cite{McCord1965},  or  a \emph{solenoidal manifold} by Sullivan \cite{Sullivan2014,Verjovsky2014}.

The tower   of coverings $\cP_{\cG}$ is said to be a \emph{presentation} for $\fM_{\cP}$, which is determined by the group chain $\cG$. 
The   projection  $\whq_{0} \colon \fM_{\cP} \to M_{0}$  is a fibration, whose fiber   $\fX_0  = \whq_{0}^{-1}(x_{0})$    is a Cantor space. That is, $\fX_0$ is a compact totally disconnected perfect metrizable topological space.

As stated in the introduction,  our approach to Problem \ref{prob-basic0} follows the works of  Odden \cite{Odden2005}, Belk and Forrest \cite{BelkForrest2014},  and Bering and Studenmund \cite{BeringStudenmund2024}. The first step is to  calculate 
  the \emph{pointed mapping class group} of $\fM_{\cP}$. 
Let $\Homeo(\fM_{\cP}; \whx)$ denote the subgroup of  $\Homeo(\fM_{\cP})$ which   fixes a basepoint $\whx \in \fM_{\cP}$, and set
\begin{equation}
\MCG(\fM_{\cP} ; \whx) = \Homeo(\fM_{\cP}; \whx)/\Homeo_0(\fM_{\cP}; \whx) ~ .
\end{equation}
   The key idea is to relate  $\MCG(\fM_{\cP} ; \whx)$ with the abstract commensurator group of $\G = \pi(M_0,x_0)$.

    A \emph{commensurator} for a countable group $\G$ is a pair of finite-index subgroups $H, K \subset \G$ and an isomorphism $\phi \colon H \to K$.
  Two commensurators $\phi_1 \colon H_1 \to K_1$ and $\phi_2 \colon H_2 \to K_2$ are equivalent, $\phi_1 \sim \phi_2$, if there exists a finite index subgroup $H_3 \subset H_1 \cap H_2$ such that $\phi_1 | H_3 = \phi_2 | H_3$.
  
\begin{defn}\label{def-comm}
 The    set of all
commensurators for $\G$, modulo  the equivalence  relation  $\sim$,
is called the \emph{abstract commensurator group} $\Comm(\G)$. Let $[\phi]$ denote the equivalence class of a commensurator $\phi$.
 \end{defn}

  Let $\G' \subset \G$ be a finite index subgroup. Given    a commensurator $[\phi] \in \Comm(\G)$ represented by  $\phi \colon H \to K$, observe that $\phi \colon H \cap \G' \to K \cap \G'$ defines a commensurator in $\Comm(\G')$. 
The converse   holds, hence  $\Comm(\G') \cong \Comm(\G)$.
Thus,   $\Comm(\G)$ can be intuitively viewed as the group of ``germs''
of isomorphisms between finite-index subgroups of $\G$.   The surveys \cite{Studenmund2015,Studenmund2024}  discuss    properties and calculations of abstract commensurators  for many classes of groups.

The fundamental idea behind the calculation of $\MCG(\fM_{\cP} ; \whx)$  is a result from pro-homotopy theory.
 Mioduszewski showed in  \cite{Mioduszewski1963} that continuous maps between inverse limits of polyhedra have a special form, which up to homotopy, gives  a ``zig-zag correspondence'' between the towers of maps.  Rogers and Tollefson gave a sharper version of this result in \cite{RT1972b} for inverse limits of covering spaces, and in particular for a homeomorphism  $h \in \Homeo(\fM_{\cP}; \whx)$. This result implies that $h$ induces a sequence of   maps between covering spaces of the base $M$, and hence   induces maps on their fundamental groups. This naturally leads to the appearance of commensurators in the study of $\MCG(\fM_{\cP} ; \whx)$.

  \begin{thm}\label{thm-chi}
 Let $\fM_{\cP}$ be a solenoidal manifold with base $M_0$ defined by a group chain $\cG$ in $\G = \pi_1(M_0, x_0)$. Then there is a well-defined characteristic map
\begin{equation}\label{eq-chi}
\chi \colon \MCG(\fM_{\cP} ; \whx)  \to \Comm(\G) \ .
\end{equation}
 \end{thm}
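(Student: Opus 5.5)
The plan is to build $\chi$ on the level of homeomorphisms, using the Rogers--Tollefson form of Mioduszewski's theorem \cite{RT1972b}, and then to show it descends to isotopy classes.

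\textbf{Step 1: the commensurator attached to $h$.} Fix $h \in \Homeo(\fM_{\cP};\whx)$ and write $\whx = (x_\ell)_{\ell \geq 0}$. By \cite{RT1972b}, $h$ is homotopic, through basepoint-preserving maps, to a map induced by a level-shifting family of pointed maps. That is, there are an index $\ell$ and a pointed continuous map $f \colon (M_\ell, x_\ell) \to (M_0, x_0)$ with $\whq_0 \circ h \simeq f \circ \whq_\ell$ rel $\whx$. Applying the same statement to $h^{-1}$ gives an index $m$ and a pointed map $g \colon (M_m, x_m) \to (M_0, x_0)$ with $\whq_0 \circ h^{-1} \simeq g \circ \whq_m$. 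One can lift $f$ so that it is compatible with higher levels, giving maps $f_k \colon M_{\ell_k} \to M_k$ for all $k$, and similarly for $g$.

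Identify $\pi_1(M_\ell, x_\ell)$ with $\G_\ell$. Then $f_* \colon \G_\ell \to \G$ is a homomorphism. To see that it is injective, compose the lifts of $f$ and $g$: the composite $g_{\ell} \circ f_{m'}$ is a pointed map $M_{N} \to M_\ell$ that covers, up to homotopy, the identity of the solenoid. It must therefore be homotopic to the covering projection $M_N \to M_\ell$, so on $\pi_1$ it is the inclusion $\G_N \subset \G_\ell$. This shows $f_*$ is injective on $\G_N$, which has finite index. Running the same argument in the other order shows $f_*(\G_N)$ has finite index in $\G$. Hence $\phi_h = f_* \colon \G_N \to f_*(\G_N)$ is a commensurator, and I set $\chi(h) = [\phi_h]$.

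\textbf{Step 2: well-definedness.} There are two sources of choice.
\begin{itemize}
\item Different representatives $f, f'$ given by \cite{RT1972b}, possibly at different levels. Pull both back to a common level $M_k$. There $f \circ p$ and $f'$ are pointed homotopic, since both induce maps homotopic to $\whq_0 \circ h$ on the solenoid and the limit is taken over a tower of coverings. So they agree on $\pi_1$ of a finite-index subgroup, and the classes in $\Comm(\G)$ coincide.
\item Isotopy. If $h_t$ is an isotopy in $\Homeo(\fM_{\cP};\whx)$, the maps $\whq_0 \circ h_t$ form a pointed homotopy. The factorization through a finite level can be chosen uniformly in $t$, by compactness of $[0,1] \times \fM_{\cP}$ and the same approximation argument. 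So $f_*$ is unchanged.
\end{itemize}

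\textbf{Step 3: homomorphism.} For $h, h'$, composing the level maps $f^{h} \circ f^{h'}_{\text{lift}}$ represents $h \circ h'$. Therefore $\phi_{hh'} = \phi_h \circ \phi_{h'}$ on a suitable finite-index subgroup, and $\chi$ is a homomorphism (with the appropriate convention for the order of composition).

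\textbf{Main obstacle.} The hardest step is making precise that pointed maps out of $\fM_{\cP}$ to $M_0$ factor, up to pointed homotopy, through some $M_\ell$, and that two factorizations become pointed homotopic at a deeper level. This is the pro-homotopy content behind the phrase ``$\pi_1$ commutes with the limit.'' I would prove it directly. First, $M_0$ is an ANR, so any map from $\fM_{\cP} \subset \prod M_\ell$ that is close enough to a factored map is homotopic to it. Second, continuous maps from the inverse limit factor approximately through finite stages. Third, basepoints are preserved, because the homotopies can be taken rel $\whx$ after a small adjustment near $x_\ell$ in the manifold $M_\ell$.
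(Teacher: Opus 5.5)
Your proof is correct, but it is organized differently from the paper's.

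The paper works with the $\e$-homotopy supplied by \cite{RT1972b}, which does not fix $\whx$. It defines $h_{\#}$ from $(\vp_{i_0})_{\#}$ followed by the short-geodesic change-of-basepoint isomorphism $\iota_{y',y}$. It takes injectivity and finite index of $(\vp_{i_0})_{\#}$ directly from the ``moreover'' clause of the Rogers--Tollefson theorem. It proves independence of the approximation by lifting a loop in $M_{k_0}$ to the leaf $L_{\whx}$ and noting that the two image loops in $M_0$ stay within a convexity radius of each other. It proves isotopy invariance by cutting the isotopy into $\e$-small steps.

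You instead push the basepoint issue onto the target. There $M_0$ and $M_k$ are ANRs, so uniformly close maps can be made pointed-homotopic. You then rely on a single pro-homotopy fact: the pointed homotopy classes $[\fM_{\cP}, M_k]_*$ form the direct limit of the classes $[M_N, M_k]_*$. This combines approximate factorization through a finite stage with stabilization of homotopies, the latter obtained by applying the same factorization to $\fM_{\cP}\times[0,1] = \varprojlim M_N \times [0,1]$. This buys you two things:
\begin{enumerate}
\item Independence of choices and isotopy invariance become one statement, since an isotopy in $\Homeo(\fM_{\cP};\whx)$ is simply a pointed homotopy of $\whq_0\circ h$.
\item The commensurator property is derived by playing the approximations of $h$ and $h^{-1}$ against each other. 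This makes visible that invertibility of $h$ is what forces $f_*$ to be virtually injective with finite-index image, instead of importing this from \cite{RT1972b}.
\end{enumerate}
This is essentially the pro-categorical route the paper mentions in the remark after its proof. It also gives that $\chi$ is a homomorphism, which goes beyond the statement.

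Two points need care:
\begin{enumerate}
\item The pointed upgrade is not part of \cite{RT1972b}. It should be done, as your last paragraph indicates, for maps into $M_0$ and $M_k$ rather than for self-maps of $\fM_{\cP}$, which is not an ANR.
\item Identities such as ``$g_\ell\circ f_{m'}$ is homotopic to the covering projection'' hold only after precomposing with a further projection $M_{N'}\to M_N$. This does not affect commensurator classes.
\end{enumerate}
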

 
The construction of the map $\chi$ is given in Section~\ref{sec-char}.  The commensurator $\chi([h]) \in \Comm(\G)$ is the ``virtual time change'' associated to the homeomorphism $h$.

The map $\chi$  need not be   surjective. A  commensurator $[\phi] \in \Comm(\G)$  is in the image of $\chi$ only if it is represented by a map which preserves the group chain defining the tower of  coverings   $\cP$, up to ``zig-zag'' equivalence.

The map $\chi$     need not be injective, as there may be non-trivial $h \in \MCG(\fM_{\cP} ; \whx)$ which acts  as the identity on a finite index subgroup of $\G$, and hence $\chi(h)$ is the identity.

The first step towards solving Problem~\ref{prob-basic0} is then to solve the following:

 \begin{problem}\label{prob-chi} 
 Determine the image of the characteristic map $\chi \colon \MCG(\fM_{\cP} ; \whx)  \to \Comm(\G)$, and give conditions on the manifold $M_0$  and group  chain $\cG$  which imply that  the map $\chi$ is injective.
\end{problem}
 
Our approach is based on a result from the classification theory of solenoidal manifolds    in \cite{CHL2019a}.
 A solenoidal manifold $\fM_{\cP}$ has an associated Cantor action $\Phi \colon \G \times \fX_0 \to \fX_0$ of $\G$ on the fiber $\fX_0$, where $\Phi$  is defined by the monodromy action of  the fibration $\whq_0 \colon \fM_{\cP} \to M_0$.   This  monodromy action defines an \emph{odometer} in the sense of  \cite{CortezPetite2008,CortezMedynets2016}; see   Section~\ref{sec-odometers}. Every odometer action of a finitely-presented group can be realized as the monodromy of a solenoidal manifold; see Section~\ref{sec-classify} or   \cite{CandelConlon2000,CHL2019a,MS2006}.  The study of odometer actions is a central aspect of  the classification theory of solenoidal manifolds.
 The notion of a \emph{dynamical commensurator} for an odometer   $(\fX, \G, \Phi)$ is a natural consequence of this study.

 First, we recall a basic definition.  An odometer action $(\fX,\G,\Phi)$ admits arbitrarily small clopen sets $U \subset \fX$ such that the collection $\{\Phi(g)(U) : g \in G\}$ forms a partition of $\fX$. We call a set $U$ with this property an \emph{adapted set}.
 
 \begin{defn}\label{def-dyncomm-point}
 Let  $x \in \fX$, then a homeomorphism $h_U \colon U \to V$  is said to be a \emph{dynamical commensurator at} $x$, if $U, V \subset \fX$ are adapted subsets with $x \in U \cap V$,  $h_U(x) = x$,  and $h_U$ induces an isomorphism $\Theta_U \colon \cH_U \to \cH_V$ of their restricted holonomy actions.
 \end{defn}
 
  Two commensurators at $x$, $h_U \colon U \to V$ and $h_{U'} \colon U' \to V'$, are equivalent if there exists an adapted set $U''$ with $x \in U'' \subset U \cap U'$ such that $h_U|U'' = h'_{U'}|U''$. We   write $(h_U,U,V) \comm (h'_{U'},U',V')$.

\begin{defn} \label{def-dyncomm}
Let $(\fX, \G, \Phi)$ be an odometer. For  $x \in \fX$, the \emph{dynamical commensurator group} of $(\fX, \G, \Phi)$ at $x$ is the set of germs of dynamical commensurators,
\begin{equation}\label{eq-dyncomm}
\Comm(\fX, \G, \Phi, x) = \{ h \colon U \to V \mid x \in U \cap V\} \slash  \comm  \  .
\end{equation}
 \end{defn}
A dynamical commensurator is the analog of  ``Morita equivalence'' in the category of foliated spaces, and correspondingly,   $\Comm(\fX, \G, \Phi, x)$ is the group of pointed Morita equivalences. The method for calculating  the groups $\Comm(\fX, \G, \Phi, x)$ in examples is discussed in Section~\ref{subsec-dyncomm}.

 Given  a solenoidal manifold $\fM_{\cP}$, let     $(\fX_0, \G, \Phi)$ denote the associated    monodromy fiber action. 
Given a choice of basepoint $\whx \in \fM_{\cP}$   let $x  \in \fX_0$ denote the corresponding point over $x_0 = \whq_0(\whx) \in M_0$.

The following   result  follows from     \cite{CHL2019a}, and   is discussed in more detail in Section \ref{sec-odometers}.

\begin{thm}\label{thm-charmap}
Suppose that $\fM_{\cP}$ and $\fM'_{\cP'}$ are   solenoidal manifolds.
  If $\fM_{\cP}$ and $\fM'_{\cP'}$ are homeomorphic,  then their monodromy odometers  $(\fX_0, \G, \Phi)$ and  $(\fX_0', \G', \Phi')$ are  return equivalent. Moreover, for $x \in \fX_0$  corresponding to $\whx \in \fM_{\cP}$, there is a well-defined characteristic map
  \begin{equation}
\sigma_{\cP,  \whx} \colon \MCG(\fM_{\cP} ; \whx) \to \Comm(\fX_0, \G, \Phi, x) \ .
\end{equation}
\end{thm}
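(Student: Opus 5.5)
The plan has two parts: first the return equivalence of the monodromy odometers, then the construction of $\sigma_{\cP,\whx}$. Both rest on one fact about $\fM_{\cP}$. It is a foliated space whose leaves are its path components, and each fiber $\whq_\ell^{-1}(x_\ell) \subset \fM_{\cP}$ is a complete transversal. Moreover, every clopen subset of a fiber $\fX_\ell$ that is a fiber of some further projection $\whq_m$ is an adapted set for the monodromy action, and its restricted holonomy is the monodromy action of $\G_m$ on it.

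\emph{Return equivalence.} Let $h \colon \fM_{\cP} \to \fM'_{\cP'}$ be a homeomorphism. It maps path components to path components, so it is a leafwise homeomorphism of the foliated spaces. By the Rogers--Tollefson form of the Mioduszewski zig-zag theorem, after an isotopy along leaves we may assume $h$ is covered by maps of the towers. That is, for each $\ell$ there is $m$ with $h$ inducing a map $M_m \to M'_\ell$ compatible with the projections, and this map is a finite covering up to homotopy.

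From this I would take a small clopen transversal $U \subset \fX_0$ at the basepoint: a fiber of $\fM_{\cP} \to M_m$ over a point near $x$. Its image $h(U)$ lies in a foliation chart of $\fM'_{\cP'}$. Sliding $h(U)$ along the leaves, that is, composing with the holonomy of the plaques, gives a clopen subset $V \subset \fX'_0$. Because $h$ conjugates the leafwise holonomy pseudogroups, the induced map $U \to V$ conjugates the restricted holonomy actions $\cH_U$ and $\cH'_V$. Shrinking $U$ so that $V$ is also adapted (possible because adapted sets form a neighborhood base in an odometer) yields return equivalence.

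\emph{The map $\sigma_{\cP,\whx}$.} Let $h \in \Homeo(\fM_{\cP};\whx)$ and apply the construction above with $\fM'_{\cP'} = \fM_{\cP}$. Since $h(\whx) = \whx$, the projection of $h(U)$ to $\fX_0$ along plaques fixes $x$. This gives a dynamical commensurator $h_U \colon U \to V$ at $x$, and I set $\sigma_{\cP,\whx}([h]) = [h_U]$. Three checks are needed.
\begin{enumerate}[(i)]
\item Independence of the choices of $U$ and of the chart: different choices agree on a smaller adapted set, because holonomy along plaques is locally unique.
\item Compatibility with composition: the germ of $h_U \circ k_{U'}$ is the germ of $(h\circ k)_{U''}$, so $\sigma_{\cP,\whx}$ is a homomorphism.
\item Isotopy invariance, which I expect to be the main obstacle.
\end{enumerate}

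For (iii), let $h_t$ be a path in $\Homeo(\fM_{\cP};\whx)$. Each point $y \in U$ traces a path $t \mapsto h_t(y)$. Its image lies in a single path component, hence in a leaf, and it moves continuously in $t$. The transverse coordinate of $h_t(y)$ in the totally disconnected fiber must then be locally constant in $t$. The issue is uniformity in $y$. I would first use a Lebesgue-number argument on $[0,1]$ with finitely many foliation charts. This shows that the transverse projection of $h_t(U'')$ is independent of $t$ for a sufficiently small adapted $U'' \ni x$, using compactness of $U$ and local constancy of maps into Cantor sets. Once the transverse germ is constant in $t$, $h_0$ and $h_1$ define the same germ, so $\sigma_{\cP,\whx}$ descends to $\MCG(\fM_{\cP};\whx)$. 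The delicate point is that plaque holonomy along the path may wind around the leaf. I would handle this by subdividing the path $h_t$ into short pieces, each staying inside one chart, and composing the resulting holonomy germs, which are identities.
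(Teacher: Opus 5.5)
Your outline matches the paper's, which cites \cite{CHL2019a} for return equivalence and notes that the germ of the plaque-projected map is unchanged by a leafwise homotopy fixing $\whx$. The gap is in the one step that carries the real content: ``$h$ conjugates the holonomy pseudogroups, hence $h_U$ conjugates $\cH_U$ to $\cH'_V$.''

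\textbf{Pseudogroup conjugacy is not enough.} It only says that, for $g\in\G_U$, the map $h_U\circ\Phi(g)\circ h_U^{-1}$ agrees near each $h_U(y)$ with some $\Phi'(g'_y)$. Here $g'_y\in\G'$ is the class of the closed-up base projection of $h\circ\wtgamma_y$, and it is merely locally constant in $y$ on the Cantor set $U$. The restricted pseudogroup is much larger than $\cH_U$. For example, take the dyadic odometer with $U=2\whmZ_2$, and the involution equal to the identity on $4\whmZ_2$, to $+4$ on $2+8\whmZ_2$, and to $-4$ on $6+8\whmZ_2$. It fixes $0$ and normalizes the pseudogroup. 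Yet it conjugates translation by $2$ to a map that is $+6$ near $0$ and $-2$ near $4$, which is not in $\cH_U$.

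What you must prove is that $y\mapsto g'_y$ is constant on a small enough $U$. That is exactly where the zig-zag enters. For $m$ large, $\whq'_0\circ h$ is uniformly close to $\varphi\circ\whq_m$ for a single map $\varphi$ on $M_m$. So every loop $\whq'_0\circ h\circ\wtgamma_y$ is close to the one loop $\varphi\circ\gamma$, and $g'_y=\varphi_\#(g)$ for all $y$. You invoke Rogers--Tollefson but never use it at this point.

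Moreover, Theorem~\ref{thm-RT} gives an $\e$-homotopy, not an isotopy, to a map $\whvp$ that need not be a homeomorphism. So you cannot ``assume $h$ is covered by tower maps.'' Keep $h$ to obtain the homeomorphism $h_U$, and use $\whvp$ only to compute the classes $g'_y$. This works because the traces of a small homotopy stay in single plaques, so $h$ and $\whvp$ have the same plaque projection on small $U$. (Also, Rogers--Tollefson give only that $\varphi_\#$ is injective with finite-index image, not that $\varphi$ is a covering up to homotopy; that is all you need.)

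\textbf{Adaptedness of $V$ is also unjustified.} ``Adapted sets form a neighbourhood base'' does not let you shrink $U$ until $h_U(U)$ is adapted. The set $h_U(U)$ is a minimal set of the finite-index subgroup $\psi(\G_U)$, $\psi=\varphi_\#$, and such minimal sets need not be adapted. Reverse the order instead:
\begin{itemize}
\item Once $h_U\circ\Phi(g)\circ h_U^{-1}=\Phi'(\psi(g))|_V$ for all $g\in\G_U$, choose an adapted $V_1\subset V$ and set $U_1=h_U^{-1}(V_1)$.
\item The intertwining relation forces $U_1$ to be adapted; this is the role of \cite[Lemma~3.4]{CHL2019a}.
\item It also gives $h_U\cH_{U_1}h_U^{-1}\subseteq\cH'_{V_1}$.
\item The reverse inclusion, needed for $\Theta_{U_1}$ to be an isomorphism, comes from running the same argument for $h^{-1}$, after shrinking further if necessary.
\end{itemize}

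\textbf{Your step (iii) is fine, and simpler than you fear.} Since $h_t(\whx)=\whx$ for all $t$, uniform continuity of the isotopy on $\fM_\cP\times[0,1]$ gives an adapted $U''\ni x$ with $h_t(U'')$ inside one chart at $\whx$ for every $t$. Each trace then stays in a single plaque, so the plaque projection of $h_t|U''$ is independent of $t$. No winding around the leaf can occur.
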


   Theorems~\ref{thm-chi} and \ref{thm-charmap} yield  maps $\chi$ and $\sigma_{\cP,  \whx}$    in  the  diagram
  \begin{align} \label{eq-diagram}
 \xymatrix{ 
   &  \Comm(\G)        \\
  \MCG(\fM_{\cP} ; \whx)  \ar[ru]^{\chi}  \ar[rd]^{\sigma_{\cP,  \whx}}&      \\
  & \Comm(\fX_0, \G, \Phi, x)     \ar@{-->}[uu]_{\Theta_{\cP} \ ?}
} 
\end{align}
\begin{defn}\label{def-regular}
An odometer action $(\fX_0, \G, \Phi)$ is \emph{regular} if there exists  
\begin{equation}\label{eq-regular}
\Theta_{\cP} \colon \Comm(\fX_0, \G, \Phi, x) \to \Comm(\G)
\end{equation}
 such that the diagram \eqref{eq-diagram} commutes, and $\Theta_{\cP}$ is an injection. 
\end{defn}
Our approach to Problem~\ref{prob-chi} is then based on solving the following two problems.
 \begin{problem}\label{prob-lifting}
 Give conditions on   $\cP_{\cG}$  which suffice to imply that $(\fX_0, \G, \Phi)$ is regular.
  \end{problem}
Note that   Problem~\ref{prob-lifting} only involves the  local automorphism  properties of the dynamical system $(\fX_0, \G, \Phi)$. 
The second problem involves topological aspects of $M_0$ and its coverings. 
 \begin{problem}\label{prob-surjective}
 Give conditions on $M_0$ and the tower of coverings $\cP_{\cG}$ which suffice to imply that $\sigma_{\cP,  \whx}$ is surjective, or better, is  an isomorphism.
 \end{problem}

Solutions to Problems~\ref{prob-lifting} and \ref{prob-surjective}  reduce the calculation of   $\MCG(\fM_{\cP} ; \whx)$ to calculating the   group  $\Comm(\fX_0, \G, \Phi, x)$ of germs of pointed isomorphisms of the odometer action $(\fX_0, \G, \Phi)$  associated to the fibration $\fM_{\cP} \to M_0$.  There are several  approaches to solving  these problems.

The works   \cite{BelkForrest2014,BeringStudenmund2024,Odden2005} consider the case of \emph{universal solenoids}, which we briefly describe.    The collection of all finite-index subgroups of $\G$ forms a partially ordered set $\cG_u(\G)$, with maps defined by inclusions. In this situation, there is a group chain $\cG_{\nu}(\G) \subset \cG_u(\G)$ of characteristic normal subgroups of $\G$ which is cofinal in the set of all finite index subgroups. Then  the \emph{universal solenoid} over $M_0$ is defined to be $\widehat{M_0} = \fM_{\cP}$ where $\cP_{\cG_\nu}$ is the tower of coverings of $M_0$ defined by $\cG_{\nu}(\G)$. The fiber of     $\whq_0 \colon \widehat{M_0}  \to M_0$ is, up to isomorphism, the profinite completion $\whG$ of $\G$.

For a closed surface $\Sigma_g$ of genus $g \geq 2$, the space $\widehat{\Sigma_g}$ is called the \emph{universal hyperbolic solenoid}. This space was introduced in the   works of  Biswas, Nag and Sullivan   \cite{BNS1996,BiswasNag1998,BiswasNag2000,Sullivan1993}  in their study of the universal Teichm\"{u}ller space  of $\widehat{\Sigma_g}$. Odden proved in his thesis the seminal result:
\begin{thm}[Odden, \cite{Odden2005}] \label{thm-odden}
The map $\chi \colon \MCG(\widehat{\Sigma_g} ; \whx)  \to \Comm(\G)$ is an isomorphism.
\end{thm}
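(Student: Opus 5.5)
To prove Odden's Theorem~\ref{thm-odden}, I would show separately that $\chi$ is surjective and that it is injective, using throughout that $\G = \pi_1(\Sigma_g)$, $g\ge 2$, is the fundamental group of a closed aspherical surface.

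\textbf{Surjectivity.} Let $[\phi] \in \Comm(\G)$ be represented by $\phi \colon H \to K$, with $H,K$ of finite index. Let $\Sigma_H$ and $\Sigma_K$ be the finite covers of $\Sigma_g$ corresponding to $H$ and $K$.
\begin{enumerate}[(1)]
\item Since $\Sigma_H$ and $\Sigma_K$ are closed aspherical surfaces, the Dehn--Nielsen--Baer theorem realizes $\phi$ by a pointed homeomorphism $f \colon \Sigma_H \to \Sigma_K$.
\item Because the chain $\cG_\nu(\G)$ is cofinal among all finite-index subgroups, the universal solenoid can be presented equally well as the inverse limit of all finite covers of $\Sigma_H$, or of all finite covers of $\Sigma_K$. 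So $\widehat{\Sigma_H} \cong \widehat{\Sigma_g} \cong \widehat{\Sigma_K}$, by homeomorphisms preserving the basepoint $\whx$.
\item The homeomorphism $f$ lifts to a homeomorphism between corresponding finite covers: the cover of $\Sigma_H$ for $H' \subset H$ maps to the cover of $\Sigma_K$ for $\phi(H')$. Since $\phi$ carries finite-index subgroups of $H$ bijectively onto those of $K$, these lifts are compatible and assemble to a pointed homeomorphism $\widehat f$ of $\widehat{\Sigma_g}$.
\item By the construction of $\chi$ in Section~\ref{sec-char}, via the Rogers--Tollefson zig-zag, $\chi([\widehat f]) = [\phi]$.
\end{enumerate}

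\textbf{Injectivity.} Suppose $h \in \Homeo(\widehat{\Sigma_g};\whx)$ satisfies $\chi([h]) = 1$.
\begin{enumerate}[(1)]
\item By Rogers--Tollefson, up to pointed homotopy $h$ is induced by a level map. That is, there are pointed maps $h_\ell \colon M_{n_\ell} \to M_\ell$ between finite covers, compatible with the projections, such that $h$ is the inverse limit of the $h_\ell$.
\item The condition $\chi([h]) = 1$ means that, on a finite-index subgroup, $(h_\ell)_*$ agrees with the inclusion-induced map. Since the subgroups in $\cG_\nu$ are characteristic, this holds at every level past some $\ell_0$. Asphericity of the $M_\ell$ then makes each $h_\ell$ pointed homotopic to the covering projection $p \colon M_{n_\ell} \to M_\ell$.
\item These homotopies must be chosen compatibly in $\ell$. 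I would lift the homotopy at level $\ell_0$ through the coverings. The lift is unique once it is required to be based at $\whx$, so the lifts are automatically compatible and define a homotopy $h \simeq \mathrm{id}$ on $\widehat{\Sigma_g}$ fixing $\whx$.
\end{enumerate}

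\textbf{Main obstacle: from homotopy to isotopy.} Step (3) only gives a homotopy, whereas $\MCG$ is defined through the identity component $\Homeo_0(\widehat{\Sigma_g};\whx)$, which requires a path of homeomorphisms. At a single finite level this is Baer--Epstein: homotopic homeomorphisms of a closed surface are isotopic. The difficulty is making the isotopies compatible up the tower.

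What I would try first is to fix a hyperbolic metric on $\Sigma_g$ and pull it back to every cover, then replace each straight-line homotopy by a geodesic (or harmonic-map) isotopy. These canonical constructions commute with deck transformations and with the covering maps, so they should pass to the inverse limit. Two points need care:
\begin{itemize}
\item The isotopies must lie in $\Homeo_0$ continuously in the compact-open topology of the solenoid.
\item They must fix $\whx$.
\end{itemize}
Here I would argue leafwise on the dense baseleaf, using that $\widehat{\Sigma_g}$ is a foliated space with hyperbolic surface leaves, and then extend by continuity. This compatibility is the crux of Odden's argument, and I expect it to consume most of the work.
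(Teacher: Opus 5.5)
\emph{Verdict: there is a genuine gap. Surjectivity is fine, but injectivity is only established up to homotopy, and your proposed homotopy-to-isotopy step does not work as stated.}

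\emph{What works.} Your surjectivity argument is sound. It is essentially the realization step the paper uses for its generalization (Theorems~\ref{thm-surjective} and \ref{thm-universalthm}). Dehn--Nielsen--Baer is the Borel property for surfaces, and lifting $f$ to the covers indexed by $H' \mapsto \phi(H')$ plays the role of the suspension map $\fM_{U_\ell} \to \fM_{V_\ell}$. Your injectivity steps (1)--(3) are also essentially correct, after three repairs:
\begin{enumerate}[(a)]
\item Rogers--Tollefson only gives an $\e$-homotopy from $h$ to the map induced by the $\vp_{i_\ell}$, and this homotopy need not fix $\whx$. So $h$ is not their inverse limit.
\item $(h_\ell)_\#$ is the inclusion at all deep levels because the chain is cofinal and $p \circ h_\ell = h_0 \circ p$ with $(p^\ell_0)_\#$ injective. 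It has nothing to do with the subgroups being characteristic.
\item Reducing injectivity to the kernel uses that $\chi$ is a homomorphism, which you should check.
\end{enumerate}
Even so, these steps only show that every class in $\ker\chi$ is represented by a homeomorphism that is \emph{pointed-homotopic} to the identity. That is the homotopy-level statement, in the spirit of Theorem~\ref{thm-BS2024}. Since $\MCG(\widehat{\Sigma_g};\whx)$ is defined by isotopy, the step that separates Odden's theorem from its homotopy version is exactly the one you leave open.

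\emph{Why the suggested tools fail.}
\begin{enumerate}[(a)]
\item The geodesic straight-line homotopy on the hyperbolic leaves is equivariant and descends to the solenoid. But its intermediate maps need not be injective, so it only reproduces the homotopy you already have.
\item Harmonic-map methods need smooth, finite-energy maps. Here $h$ is only a homeomorphism, whose leafwise restrictions are in general just uniform quasi-isometries of $\mathbb{H}^2$. Also, the harmonic representative of the class of the identity is the identity itself, so no path starting at $h$ is produced.
\item Baer--Epstein cannot be applied level by level. The $\vp_{i_\ell}$ are not homeomorphisms, and $h$ is in general not induced by a homeomorphism at any finite level.
\item A family of homeomorphisms of the dense baseleaf extends to $\widehat{\Sigma_g}$ only if it is uniformly continuous for the solenoidal metric (almost equivariant under deep finite-index subgroups, uniformly in $t$) and the extensions are again homeomorphisms. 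So ``extend by continuity'' presupposes what has to be proved.
\end{enumerate}

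\emph{What is missing.} You need a way of producing isotopies that is canonical --- natural under M\"obius transformations, hence compatible with all the deck groups at once --- and continuous in its input, so that transverse continuity comes for free. The paper cites Odden's theorem rather than reproving it, and describes his method as based on boundary maps of the hyperbolic disk. That is where such a tool enters:
\begin{enumerate}[(a)]
\item The leafwise restrictions of $h$ extend to $S^1_\infty$.
\item The condition $\chi([h])=1$ forces the boundary map on the baseleaf to commute with a finite-index Fuchsian subgroup. Since fixed points of its hyperbolic elements are dense, the boundary map is the identity.
\item A conformally natural extension of circle homeomorphisms, of Douady--Earle type, is the kind of tool that turns this into an honest isotopy in $\Homeo(\widehat{\Sigma_g};\whx)$.
\end{enumerate}
In the paper's general framework, this is what the ``unique up to isotopy'' clause of the ultra Borel hypothesis is invoked for, through the factorization $\chi = \Theta_{\cG} \circ \sigma_{\cP,\whx}$.
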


More generally,  suppose that $M_0$ is a closed aspherical manifold, that is, the universal cover of $M_0$ is contractible.    Let $\widehat{M_0}$ denote  the universal solenoid over $M_0$.  The work of   Bering and Studenmund \cite{BeringStudenmund2024}   considered the  group  $\cE(\widehat{M_0}  ; \whx)$  of pointed homotopy self-equivalences of $\widehat{M_0}$.
 
\begin{thm}[Bering \& Studenmund,  \cite{BeringStudenmund2024}] \label{thm-BS2024} Assume that $M_0$ is a closed aspherical manifold, and assume its fundamental group $\G$ is residually finite. Then the characteristic map $\chi$ induces  an isomorphism
 $ \cE(\widehat{M_0} , \whx) \cong  \Comm(\G)$.
\end{thm}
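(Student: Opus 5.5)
Since $M_0$ is aspherical with $\G = \pi_1(M_0,x_0)$ residually finite, every finite cover $M_\ell$ of $M_0$ is again aspherical, a $K(\G_\ell,1)$. The plan is to build $\chi$ on $\cE(\widehat{M_0},\whx)$ exactly as for Theorem~\ref{thm-chi}, and then construct an inverse from commensurators to pointed homotopy classes. The tower for $\widehat{M_0}$ is given by the cofinal chain $\cG_\nu(\G)$ of characteristic subgroups $\G_\ell$. I would use the pro-homotopy results of Mioduszewski and Rogers--Tollefson in the following form: a pointed map $f\colon \widehat{M_0}\to \widehat{M_0}$ is determined up to pointed homotopy by a coherent family of pointed maps $f_\ell \colon M_{n(\ell)} \to M_\ell$ with $p_{\ell}\circ f_{\ell} \simeq f_{\ell-1}\circ p$. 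Here I use that $M_\ell$ is an ANR, so $[\widehat{M_0},M_\ell]_* = \varinjlim_k [M_k, M_\ell]_*$.

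\textbf{Definition of $\chi$.} Each $f_\ell$ induces a homomorphism $(f_\ell)_* \colon \G_{n(\ell)} \to \G_\ell$. If $f$ is a homotopy equivalence with pointed homotopy inverse $g$, the composites $g\circ f$ and $f\circ g$ are homotopic to the identity at the level of the tower. So the levelwise homomorphisms compose, up to restriction to finite-index subgroups, to the inclusions. For a fixed $\ell$, I would then check that $(f_0)_*$ restricted to $\G_{n(0)}$ is injective with image of finite index, using the fact that $g$ supplies a left inverse on a smaller finite-index subgroup together with residual finiteness. Injectivity holds because a kernel element would survive in every finite quotient, and residual finiteness rules this out. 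Finite index of the image I expect to follow from covering space theory: $f_0$ lifts to a map between finite covers which, composed with the lift of $g$, is a homotopy equivalence. This gives a commensurator $[(f_0)_*]$. Compatibility of the tower ensures that the class is independent of the level and of the choice of representatives, and that homotopic maps give the same class. Functoriality under composition is immediate from the definition, so $\chi$ is a homomorphism.

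\textbf{Surjectivity.} Given $\phi\colon H\to K$, for each $\ell$ choose $k$ with $\G_k \subset H$ and $\phi(\G_k) \subset \G_\ell$. This is possible because $\phi^{-1}(\G_\ell\cap K)$ has finite index in $H$, and the chain $\G_k$ is cofinal. Since $M_\ell$ is a $K(\G_\ell,1)$, the homomorphism $\phi|_{\G_k}$ is realized by a pointed map $M_k\to M_\ell$, unique up to pointed homotopy. Uniqueness gives the coherence of these maps, hence a map $\widehat{M_0}\to\widehat{M_0}$. Doing the same for $\phi^{-1}$ gives a homotopy inverse, again by uniqueness.

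\textbf{Injectivity.} Suppose $\chi([f])$ is trivial. Then each $(f_\ell)_*$ agrees with an inclusion on some finite-index subgroup $\G_{k}$. By asphericity, $f_\ell \simeq p$ (the covering projection $M_k \to M_\ell$) as pointed maps. Hence $f$ is pointed homotopic to the identity in the pro-category, and so genuinely homotopic via the $\varinjlim$ description.

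\textbf{Main obstacle.} The step I expect to be hardest is justifying that pro-level (levelwise) homotopies assemble into an actual pointed homotopy on $\widehat{M_0}$. There may be a $\varprojlim^1$ obstruction. I would try first to kill it by noting that the pointed homotopy classes are rigid: by asphericity, $[M_k,M_\ell]_*$ is the set of homomorphisms $\G_k\to\G_\ell$, with no higher homotopy groups. The Milnor sequence term therefore involves only $\pi_1$ of mapping spaces, whose pointed components are contractible for aspherical targets.
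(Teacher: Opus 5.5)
The paper does not prove this statement; it cites Bering--Studenmund. The remark following Theorem~\ref{thm-chi} describes their argument as exactly your route. Asphericity turns the pro-map of the tower of covers into a pro-map of the tower of finite-index subgroups, and pro-automorphisms of that tower are then identified with $\Comm(\G)$.

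Your outline is essentially sound, but one step is misjustified. Residual finiteness does not make $(f_0)_*$ injective on all of $\G_{n(0)}$: it says a nontrivial element survives in \emph{some} finite quotient, which places no constraint on a kernel. What you actually have is injectivity on the smaller finite-index subgroup $\G_m$ on which $(g_0)_*\circ(f_0)_*$ is the inclusion, and that is all a commensurator requires. If you want injectivity on $\G_{n(0)}$, argue as follows. The kernel meets $\G_m$ trivially, so it injects into the finite coset space $\G_{n(0)}/\G_m$. And $\G$ is torsion-free because $M_0$ is a finite-dimensional $K(\G,1)$. Residual finiteness is not the relevant input here.

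There are two smaller points. First, in the surjectivity step, your homotopy-coherent family must be made strictly commuting before it defines a map of inverse limits. This is immediate: since $\phi(\G_k)\subset\G_\ell$, the lifting criterion lets you lift $f_{\ell-1}\circ p$ through the covering $p_\ell$.

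Second, the $\varprojlim^1$ obstacle is milder than you expect. A pointed homotopy from $\whq_0\circ f$ to $\whq_0\circ g$ lifts uniquely to each $M_\ell$ starting at $\whq_\ell\circ f$, and the lifts are compatible by uniqueness. At time one, the lift agrees with $\whq_\ell\circ g$ by unique lifting on the connected space $\widehat{M_0}$, since both send $\whx$ to $x_\ell$. So pointed homotopy classes of maps into $\widehat{M_0}$ are already detected in $[\widehat{M_0},M_0]_*=\varinjlim_k \mathrm{Hom}(\G_k,\G)$. This gives both the injectivity of $\chi$ and your homotopy-inverse claim directly, without any analysis of mapping spaces.
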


 For both of these previous results, the fact that the groups $\G$ considered are torsion free and the universal group chains  defining  $\fM_{\cP}$ are normal imply that Problem ~\ref{prob-lifting} always has a solution. In addition, the result  of Odden uses the theory of surface homeomorphisms to show that Problem~\ref{prob-surjective} has a solution. We next describe a dynamical approach  to Problems \ref{prob-lifting} and \ref{prob-surjective}.

Section~\ref{sec-dynamics} discusses some dynamical properties of odometer actions. In particular, we recall the notion of topologically free actions, and introduce  the more general  notion  of   a \emph{coherent} odometer action.  Then we have:
 
\begin{thm}\label{thm-injective}  If   $(\fX, \G, \Phi)$  is  an effective and  coherent  odometer, then the action is regular.
\end{thm}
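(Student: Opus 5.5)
The plan is to construct $\Theta_{\cP}$ directly from the restricted holonomy groups, and to use effectiveness together with coherence only at one point: to identify each restricted holonomy group $\cH_U$ with a finite index subgroup of $\G$. For an adapted set $U \subset \fX$, let $\G_U = \{g \in \G \mid \Phi(g)(U) = U\}$. Since the translates of $U$ partition $\fX$ into finitely many pieces, $\G_U$ has finite index in $\G$. The restricted holonomy action $\cH_U$ is the image of the restriction map $\rho_U \colon \G_U \to \Homeo(U)$. I will read coherence as follows: whenever an element of $\G_U$ acts as the identity on some open subset of an adapted $U$, it acts as the identity on a neighborhood of every point of its orbit closure. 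By minimality and equicontinuity of the odometer, it then acts as the identity on all of $\fX$. If the paper's definition in Section~\ref{sec-dynamics} is stated in a different form, the first step below is where it must be translated into this consequence.

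\emph{Step 1 ($\rho_U$ is injective).} Suppose $g \in \ker \rho_U$. Then $g$ fixes the open set $U$ pointwise. Coherence upgrades this to $g$ acting trivially on $\fX$, and effectiveness gives $g = e$. Hence $\cH_U \cong \G_U$, and the same holds for $V$ and for every smaller adapted set. Given a dynamical commensurator $h_U \colon U \to V$ at $x$ with induced isomorphism $\Theta_U \colon \cH_U \to \cH_V$, I define
$$\phi_{h} = \rho_V^{-1} \circ \Theta_U \circ \rho_U \colon \G_U \to \G_V .$$
This is an isomorphism between finite index subgroups, that is, a commensurator of $\G$.

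\emph{Step 2 ($\Theta_{\cP}$ is a well-defined homomorphism).} Let $U'' \subset U$ be adapted with $x \in U''$. The isomorphism induced by $h_U|U''$ is the restriction of $\Theta_U$ to $\cH_{U''}$, which corresponds to $\G_{U''} \subset \G_U$, a subgroup of finite index. So equivalent dynamical commensurators $(h_U,U,V) \comm (h'_{U'},U',V')$ give commensurators that agree on the finite index subgroup $\G_{U''}$. Therefore $\Theta_{\cP}([h_U]) = [\phi_h] \in \Comm(\G)$ is well defined. To compose two germs, first shrink so that the range of the first equals the domain of the second; the induced isomorphisms then compose, so $\Theta_{\cP}$ is a homomorphism.

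\emph{Step 3 (injectivity).} Suppose $[\phi_h]$ is trivial. Then there is an adapted $W \ni x$ with $W \subset U$ and $\phi_h(g) = g$ for all $g \in \G_W$. This means $h_U \circ \Phi(g) = \Phi(g) \circ h_U$ on $W$ for $g \in \G_W$. Since $h_U(x) = x$, we get $h_U(\Phi(g)x) = \Phi(g)x$ for all $g \in \G_W$. The restricted action of $\G_W$ on $W$ is minimal, because it is again an odometer, so the orbit $\G_W \cdot x$ is dense in $W$. By continuity $h_U|W = \mathrm{id}$, and the germ is trivial.

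\emph{Step 4 (commutativity of diagram \eqref{eq-diagram}).} This is the step I expect to be the main obstacle, because it requires comparing two independent constructions. One is $\chi$ of Section~\ref{sec-char}, built from Rogers--Tollefson zig-zag maps on fundamental groups of the coverings $M_\ell$. The other is $\sigma_{\cP,\whx}$ from Theorem~\ref{thm-charmap}, built from the fiber action. The approach I would take is to choose clopen sets $U_\ell = \whq_\ell^{-1}(x_\ell) \cap \fX_0$, whose stabilizers are exactly $\G_\ell = \pi_1(M_\ell, x_\ell)$ under the monodromy identification. For $h \in \Homeo(\fM_{\cP};\whx)$, the zig-zag map $M_{\ell'} \to M_\ell$ induces on $\pi_1$ a homomorphism, and I will check that it coincides with $\rho^{-1}\circ\Theta\circ\rho$ for the germ $\sigma_{\cP,\whx}([h])$. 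The check is done by lifting loops at $x_{\ell'}$ to leafwise paths starting at $\whx$, applying $h$, and comparing endpoints in the fiber. The endpoints encode the holonomy transformation, so the two commensurators agree on $\G_{\ell'}$. The delicate point is that the homotopy class of the zig-zag map is only defined up to isotopy of $h$, so one must verify that isotopies fixing $\whx$ do not change the induced map on $\G_{\ell'}$; this is where well-definedness of $\chi$ (Theorem~\ref{thm-chi}) will be invoked.
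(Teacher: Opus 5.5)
Your Step~1 fails under the paper's definition of coherence, and it cannot be repaired by reinterpreting the definition. Definition~\ref{def-coherent} only requires each $\Phi_U \colon \G_U \to \Homeo(U)$ to have \emph{finite} kernel. Your reading is that an element acting trivially on an open subset of an adapted set acts trivially everywhere. Combined with effectiveness, that reading is exactly topological freeness (compare Lemma~\ref{lem-coherentqa}), so as written you have proved only Corollary~\ref{cor-eqa}.

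The difference is genuine. In Example~\ref{ex-product-lqa-coherent}, the action of $\G=\G_0\times H$ on $\fX_0\times H/K$ is effective and coherent. Yet every element of $\{e\}\times K$ is the identity on the adapted set $U_e=\fX_0\times\{eK\}$ without being the identity on all of $\fX$. There $\rho_{U_e}$ has nontrivial kernel and $\cH_{U_e}\not\cong\G_{U_e}$, so $\rho_V^{-1}\circ\Theta_U\circ\rho_U$ is not a well-defined map into $\G$. Apart from this, your outline matches the paper's: lift $\Theta_U$ through the restriction maps, check invariance under germs, and compare with the zig-zag maps on the sets $U_\ell$.

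The missing idea is Lemma~\ref{lem-lifting}: pass to a finite index subgroup on which the restriction map becomes injective. The kernel $K_U=\ker\Phi_U$ is finite and $\Phi$ is effective, so each $k\in K_U\setminus\{e\}$ moves some point. Hence $k$ moves some translate of any sufficiently small adapted set $U'$. Choose $U'$ small enough to work for the finitely many such $k$ at once. Then the kernel $C_{U'}$ of the action on $\G/\G_{U'}$ is a finite index normal subgroup with $C_{U'}\cap K_U=\{e\}$, so $\Phi_U$ is injective on $\G'_U=\G_U\cap C_{U'}$. Do the same for $V$, and define the commensurator as $\Phi_V^{-1}\circ\Theta_U\circ\Phi_U$ on the finite index subgroup $\G''_U=\G'_U\cap\Phi_U^{-1}\bigl(\Theta_U^{-1}(\Phi_V(\G'_V))\bigr)$.

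Your Steps~2--4 must then be run on subgroups of this kind, not on $\G_{U''}$ or $\G_{\ell'}$. This matters most in Step~4. Comparing endpoints of lifted loops recovers only the holonomy $\Phi_U(g)$, that is, the group element modulo a finite kernel. So agreement with $\chi(h)$ is obtained only after this restriction.

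A smaller point in Step~3: triviality of $[\phi_h]$ gives $\phi_h=\mathrm{id}$ only on some finite index subgroup $\Lambda$. This $\Lambda$ need not contain $\G_W$ for any adapted $W\ni x$; for the chain $2^\ell\mZ$ in $\mZ$, no such $\G_W$ lies in $3\mZ$. Instead, use the orbit closure of $x$ under the normal core of $\Lambda\cap\G_W$, which is a clopen neighborhood of $x$ inside $W$, and take an adapted set inside it. With these repairs your injectivity argument goes through, and it is a useful addition, since the paper leaves injectivity implicit.
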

Then as a special case we have:
   
 \begin{cor} \label{cor-eqa}
If an odometer   $(\fX, \G, \Phi)$ is topologically free, then it is regular.
\end{cor}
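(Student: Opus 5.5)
The plan is to deduce Corollary~\ref{cor-eqa} from Theorem~\ref{thm-injective} by checking its two hypotheses for a topologically free odometer $(\fX, \G, \Phi)$: the action is effective, and it is coherent. The content therefore lies in verifying that topological freeness implies both properties. Regularity then follows directly from Theorem~\ref{thm-injective}.

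\textbf{Effectiveness.} Recall that $(\fX,\G,\Phi)$ is topologically free if for every $g \neq e$ the fixed-point set $\Fix(\Phi(g))$ has empty interior. Suppose $\Phi(g) = \mathrm{id}$. Then $\Fix(\Phi(g)) = \fX$, which has nonempty interior. Topological freeness then forces $g = e$, so the action is effective.

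\textbf{Coherence.} This is the main step, and it depends on the precise definition of coherence given in Section~\ref{sec-dynamics}. I expect coherence to be a condition on local germs. Roughly: whenever $g \in \G$ acts as the identity on some open set $W$, it acts as the identity on an entire adapted neighborhood. Equivalently, the germinal (local) holonomy group at points of $\fX$ is trivial, or is controlled uniformly across adapted sets. For a topologically free action, any $g$ acting trivially on an open set has a fixed-point set with nonempty interior, so $g = e$. Every such local condition is then satisfied vacuously: the only element with trivial germ on an open set is the identity.

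If the definition of coherence is instead phrased through the restricted holonomy groups $\cH_U$ of adapted sets, I would argue as follows. Topological freeness passes to the restricted actions of the stabilizer $\G_U$ on $U$, since $U$ is open. Hence each map $\G_U \to \cH_U$ is injective, and its kernel $\G_U \cap \ker$ is trivial, uniformly in $U$. This uniform triviality should be exactly the compatibility that coherence demands.

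The main obstacle is matching this argument to the exact formulation of coherence in Section~\ref{sec-dynamics}. I would first try to show that coherence is implied by the statement ``for every adapted $U$, the map $\G_U \to \Homeo(U)$ is injective''. This statement is immediate from topological freeness, because a nontrivial element acting trivially on the open set $U$ would have a fixed set with interior.
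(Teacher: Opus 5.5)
Your proposal is correct and follows the paper's route: the corollary is deduced from Theorem~\ref{thm-injective}, using that a topologically free action is effective and also coherent, as in Lemma~\ref{lem-coherentqa}. The paper defines coherence as finiteness of the kernel of $\Phi_U \colon \G_U \to \Homeo(U)$ for every adapted $U$. Your second branch is therefore exactly the paper's argument: the kernel is trivial, because a nontrivial element acting as the identity on the nonempty open set $U$ would contradict topological freeness. The hedging about other possible formulations of coherence is unnecessary.
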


Combine this result with Proposition~\ref{prop-nilpotentcoherent} to obtain the following general result:  
   \begin{cor} \label{cor-nilp}
If $\G$ is   nilpotent, then an effective odometer action $(\fX, \G, \Phi)$ is regular.
\end{cor}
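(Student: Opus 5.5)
The plan is to derive Corollary~\ref{cor-nilp} from Theorem~\ref{thm-injective}: show that any effective odometer action of a nilpotent group is coherent. This is exactly the content I expect Proposition~\ref{prop-nilpotentcoherent} to supply, so the corollary should follow by composing the two results. Concretely, I would proceed in three steps.

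First, recall the structure of the odometer. Choose a group chain $\{\G_\ell\}$ in $\G$ so that $\fX \cong \varprojlim \G/\G_\ell$, with $x$ corresponding to the coset sequence $(e\G_\ell)$. Effectiveness of $\Phi$ means the core $\bigcap_\ell \bigcap_{g} g\G_\ell g^{-1}$ is trivial.

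Second, verify the hypothesis of coherence. Apply Proposition~\ref{prop-nilpotentcoherent} to conclude that when $\G$ is nilpotent, the effective action $(\fX,\G,\Phi)$ is coherent. If I had to argue this directly, the key input would be that a finitely generated nilpotent group is Noetherian: every subgroup is finitely generated and ascending chains of subgroups stabilize. This is what controls the ``local'' kernels of the restricted holonomy actions on adapted sets $U$. One then uses the fact that, in a nilpotent group, the elements acting trivially on a neighborhood of $x$ form a normal subgroup of a finite-index subgroup. From this one shows that the germinal behavior of the restricted actions $\cH_U$ is determined coherently along the chain, as the definition of coherent requires.

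Third, apply Theorem~\ref{thm-injective}. This yields the map $\Theta_{\cP} \colon \Comm(\fX,\G,\Phi,x) \to \Comm(\G)$, which makes diagram~\eqref{eq-diagram} commute and is injective. Hence the action is regular in the sense of Definition~\ref{def-regular}.

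The main obstacle is the second step, namely ensuring that the notion of coherence used in Theorem~\ref{thm-injective} exactly matches what Proposition~\ref{prop-nilpotentcoherent} delivers. In particular, one has to check that Proposition~\ref{prop-nilpotentcoherent} requires only effectiveness, and not topological freeness, which can fail for nilpotent groups such as the discrete Heisenberg group. Given the proposition as stated, however, the corollary is immediate.
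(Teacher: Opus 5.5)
Your reduction matches the paper's. The paper obtains Corollary~\ref{cor-nilp} by feeding Proposition~\ref{prop-nilpotentcoherent} into Corollary~\ref{cor-eqa}, that is, into Theorem~\ref{thm-injective} via Lemma~\ref{lem-coherentqa}. (The proposition asserts topological freeness, which is stronger than the coherence you need.) The gap is that the only step with content, \emph{effective nilpotent $\Rightarrow$ coherent}, is never established.

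Your fallback sketch does not prove it. Coherence means $K_U=\ker(\Phi_U\colon \G_U\to\Homeo(U))$ is finite for every adapted $U$, and Noetherianity cannot give that. Take $\mZ^2\rtimes\mZ/2$ with the coordinate swap: it is Noetherian, yet the chain $\G_\ell=\mZ\times 2^{\ell}\mZ$ ($\ell\geq 1$) gives an effective odometer in which the adapted set with stabilizer $\mZ\times 2\mZ$ has kernel $\mZ\times\{0\}$.

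Your remark about the Heisenberg group is also wrong. For torsion-free nilpotent $\G$ the argument below gives $K_U=\{e\}$, so by Lemma~\ref{lem-coherentqa} effective odometers are topologically free.

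Finally, you cannot safely lean on Proposition~\ref{prop-nilpotentcoherent} ``as stated'': it fails when $\G$ has torsion, so the paper's own route has this defect. Take $\G=\mZ\times D_4$, with $D_4$ the dihedral group of order $8$, acting on $\whmZ\times D_4/K$ where $K=\langle s\rangle$ is a core-free reflection subgroup. This is an effective odometer of a nilpotent group, but $(0,s)$ fixes the clopen set $\whmZ\times\{K\}$ pointwise. The faulty step in the proof is $h\cdot g\cdot y=h\cdot g\cdot[g,h]\cdot y$, which requires $[g,h]\cdot y=y$. The induction hypothesis provides nothing of the kind; here $[r,s]=r^2$ moves $K$.

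What is true, and is exactly what Theorem~\ref{thm-injective} needs, is coherence for every finitely generated nilpotent $\G$. Fix an adapted $U$.
\begin{enumerate}
\item Since $g$ acts trivially on $\gamma U$ iff $\gamma^{-1}g\gamma\in K_U$, effectiveness says $\bigcap_{\gamma\in\G}\gamma K_U\gamma^{-1}=\{e\}$.
\item Because $\G$ is residually finite with finite torsion subgroup, there is a torsion-free normal subgroup $N\subset\G_U$ of finite index in $\G$.
\item Then $K_U\cap N$ is normal in the nilpotent group $N$. So if it is nontrivial, it contains some $z\neq e$ in $Z(N)$.
\item $Z(N)$ is a torsion-free abelian normal subgroup of $\G$. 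Conjugation by any $\gamma\in\G$ on it is unipotent, since $\G$ is nilpotent, and of finite order, since $\gamma^m\in N$ for some $m$. Over $\mQ$ this forces it to be the identity.
\item Thus $z\in Z(\G)$ lies in every conjugate of $K_U$, contradicting step 1.
\end{enumerate}
Hence $K_U\cap N=\{e\}$ and $|K_U|\leq[\G:N]$, so the action is coherent and Theorem~\ref{thm-injective} yields regularity. With this lemma in place of Proposition~\ref{prop-nilpotentcoherent}, your three-step plan is complete.
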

 
The solution of Problem~\ref{prob-surjective}   requires   deep results from  the topology of manifolds.  We    are  given a class     in the algebraically defined group $\Comm(\fX_0, \G, \Phi, x)$ that maps to $\xi \in \Comm(\G)$,  and must show there  exists  $h_{\xi}$ in the topological group $\Homeo(\fM_{\cP}; \whx)$ so that $\chi([h_{\xi}]) = \xi$. Our    construction of the homeomorphism $h_{\xi}$ uses that  the base manifold $M_0$ is strongly Borel, as in Definition~\ref{def-stronglyborel}. The Borel property says  that   homotopy equivalent manifolds are homeomorphic. This should be contrasted to the approach used by Odden,  which is based on relationship between maps on  the boundary of the hyperbolic disk and homeomorphisms of the disk.

We  also  introduce the class of manifolds called ``ultra Borel'' for which  the isotopy class of the homeomorphism $h_{\xi}$ constructed from a class in $\Comm(\fX_0, \G, \Phi, x)$ is uniquely defined. There are many classes of closed manifolds which satisfy the ultra Borel condition, as discussed in  Section~\ref{sec-classify}.

\begin{thm} \label{thm-surjective}
Assume that $M$ is an ultra Borel connected manifold, and hence is aspherical with torsion-free fundamental group $\G =\pi_1(M,x)$. Let $\fM_{\cP}$ be a solenoidal manifold over $M$ with  a simply connected leaf. 
Then the map 
$\ds 
 \sigma_{\cP,  \whx} \colon \MCG(\fM_{\cP} ; \whx) \to \Comm(\fX_0, \G, \Phi, x)  
$
is an isomorphism. Thus, the map $\chi \colon \MCG(\fM_{\cP} ; \whx)  \to \Comm(\G)$ is an isomorphism onto the image of $\Theta_{\cG}$.
\end{thm}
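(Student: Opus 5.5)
We prove that $\sigma_{\cP,\whx}$ is injective and that it is surjective, and then deduce the statement about $\chi$ from the commutative diagram \eqref{eq-diagram}.

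\textbf{Step 1: identifying the target.} A simply connected leaf means the kernel $\bigcap_\ell \G_\ell$ is trivial, since the leaf through $\whx$ is covered by the universal cover $\wtM$ with deck group $\bigcap_\ell \G_\ell$. Since $\G$ is torsion free, the stabilizer data of the monodromy action then gives an effective action. We will check that this places us in the setting of Theorem~\ref{thm-injective}, or at least that $\Theta_{\cP}$ exists and is injective. Concretely, a dynamical commensurator $h_U \colon U \to V$ with $U,V$ adapted clopen sets corresponds to isotropy subgroups $H=\G_U$ and $K=\G_V$ of finite index, together with an isomorphism $\phi\colon H\to K$ that conjugates the restricted actions. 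Shrinking $U$ within its germ corresponds to passing to deeper levels $\G_\ell$ of the chain.

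\textbf{Step 2: surjectivity.} Given such a germ, represented by $(h_U,\phi)$, the finite-index subgroups $H,K$ determine finite covers $M_H \to M$ and $M_K \to M$. These are closed aspherical manifolds, and $\phi$ is an isomorphism of their fundamental groups, so it is induced by a homotopy equivalence $M_H \to M_K$. By the (strongly/ultra) Borel property, this homotopy equivalence is homotopic to a homeomorphism $f_0$. We do the same at each deeper level, using $\phi|_{H\cap\G_\ell}\colon H\cap\G_\ell \to \phi(H\cap \G_\ell)$. The fact that $h_U$ conjugates the restricted holonomy actions ensures that the images $\phi(H\cap\G_\ell)$ are cofinal with the chain in $K$, which is the zig-zag compatibility.

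The key point is that the ultra Borel uniqueness, which says homotopic homeomorphisms are isotopic, lets us choose the lifts $f_\ell$ compatibly, so that $f_\ell$ covers $f_{\ell-1}$ and fixes the basepoints. Taking the inverse limit gives $h_\xi \in \Homeo(\fM_{\cP};\whx)$; here we use that the restriction of $\fM_{\cP}$ over $U$ is homeomorphic to the whole solenoid. By construction $\sigma_{\cP,\whx}([h_\xi])$ equals the given germ.

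\textbf{Step 3: injectivity.} Suppose $\sigma_{\cP,\whx}([h])$ is trivial. Then $h$ induces the identity on some finite-index $\G_\ell$. Using the Rogers--Tollefson form of $h$, which underlies Theorem~\ref{thm-chi}, $h$ is homotopic at every sufficiently deep level $M_m$ to a map inducing the identity on $\pi_1$. Asphericity makes these maps homotopic to the identity, and the ultra Borel condition upgrades the homotopies to isotopies. These isotopies must then be made compatible along the tower and assembled into an isotopy of $\fM_{\cP}$ fixing $\whx$, which gives $[h]=1$.

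\textbf{The main obstacle.} I expect the compatibility of the choices of homeomorphisms and isotopies across the infinitely many levels to be the hardest part, in both Steps 2 and 3. The difficulty is to lift isotopies to covers while keeping them based, and to ensure the limiting isotopy is continuous in the solenoid topology. I would try first to control the isotopies uniformly: at level $\ell$, choose the isotopy as the lift of the isotopy at level $\ell-1$ corrected by an isotopy that is small in the $C^0$ sense. Then we need an Alexander-trick or parametrized-uniqueness statement that is built into the ultra Borel definition.

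\textbf{Conclusion.} Once $\sigma_{\cP,\whx}$ is an isomorphism, commutativity $\chi=\Theta_{\cG}\circ\sigma_{\cP,\whx}$ together with the injectivity of $\Theta_{\cG}$ shows that $\chi$ is an isomorphism onto the image of $\Theta_{\cG}$.
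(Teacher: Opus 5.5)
Steps~1 and~2 are sound and agree with the paper in substance, but Step~3 (injectivity) has a genuine gap.

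\textbf{Steps 1 and 2.} Since $\bigcap_\ell\G_\ell=\{e\}$, Lemma~\ref{lem-topfree} gives topological freeness. Torsion-freeness is not needed for this. So $\Phi_U$ is injective and $h_U$ lifts to $\phi$. The paper then suspends a single based homeomorphism $\ophi_\ell\colon M_{U_\ell}\to M_{V_\ell}$ realizing $\phi_\ell$: it takes the lift of $\ophi_\ell$ to $\wtM$ times $h_U$ on $\wtM\times U_\ell$. That is the same map as your inverse limit of lifts.

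However, the ``key point'' you name in Step~2 is misplaced. Suppose $f_0$ fixes the basepoints and induces exactly $\phi$ on based fundamental groups. (A free homotopy from the Borel property only gives $\phi$ up to an inner automorphism, and a point-pushing isotopy removes that.) Then the lifts $f_\ell$ to the covers determined by $\G_\ell$ and $\phi(\G_\ell)$ exist and are unique by the lifting criterion, so they are automatically compatible. Surjectivity needs only the Borel property of a single finite cover. There is no infinite-level compatibility problem there, and ultra Borel uniqueness plays no role.

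\textbf{Step 3.} The Rogers--Tollefson maps $\vp_{i_m}\colon M_{i_m}\to M_m$ are merely continuous maps (by asphericity, homotopic to the projections $p^{i_m}_m$). Moreover, $h$ is only $\e$-homotopic, not isotopic, to the induced map $\whvp$. So there is no homeomorphism of a closed manifold for ultra Borel uniqueness to act on. Even level-wise isotopies would not produce a path in $\Homeo(\fM_{\cP};\whx)$ from $h$ to the identity. Definition~\ref{def-stronglyborel} contains no parametrized or controlled uniqueness that could do this assembling.

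What injectivity actually requires is that every $h\in\Homeo(\fM_{\cP};\whx)$ be isotopic rel $\whx$ to one of the homeomorphisms $h_\phi$ from the existence construction. Granting that, injectivity reduces to one level:
\begin{itemize}
\item if $\sigma_{\cP,\whx}([h_\phi])$ is trivial, then $h_U$ is the identity on some $U_m$;
\item topological freeness then forces $\phi|\G_m=\mathrm{id}$, so the lift $\tilde f$ commutes with $\G_m$;
\item hence $\tilde f$ descends to a homeomorphism of $M_m$ inducing the identity on $\pi_1(M_m,x_m)$;
\item a based isotopy of it to the identity, lifted to $\wtM$ and crossed with the identity of $U_m$, descends to an isotopy of $\fM_{\cP}$ fixing $\whx$.
\end{itemize}
Reducing an arbitrary $h$ to this form is a real rigidity theorem for solenoids. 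For surfaces, it is where Odden uses the boundary of the hyperbolic disk. Your sketch does not supply it.

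Be aware that the paper's own argument also stops once it produces $h_\phi$ with $\chi(h_\phi)=[\phi]$. It carries out only the surjectivity half, so this ingredient is not found there either.
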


The final step in the analysis of   $\MCG(\fM_{\cP})$ in Section~\ref{sec-normalizer}  relates it to the group   $ \MCG(\fM_{\cP} ; \whx)$. 
When $\fM_{\cP}$ is homogeneous,   we obtain an explicit calculation of  $\MCG(\fM_{\cP})$. This result  was   shown for the  universal hyperbolic solenoid   by Odden \cite{Odden2005}.  We extend his method to all  homogeneous solenoids.    

To state the general result, consider the group chain model $(X_{\cG}, \G  , \Phi)$ for the monodromy action, as described  in Section~\ref{sec-odometers}, with the profinite groups $\fD_{\cG} \subset \fG_{\cG}$.  Here $\fG_{\cG}$ is isomorphic with the Ellis group of the odometer action, which is the closure of the action $(X_\cG,\G,\Phi)$ in $\Homeo(X_\cG)$, and $\fD_{\cG}$ is the isotropy group of the action of $\fG_{\cG}$ on $X_{\cG}$ at $x$. The space $\fM_{\cP}$ is homogeneous precisely when $\fD_{\cG}$ is the trivial group by the results   \cite{CHL2019a,FO2002}. Then we have:

 \begin{thm}\label{thm-homogeneous}
Let $\fM_{\cP}$ be a homogeneous solenoidal manifold. 
Assume the baseleaf $L_{\whx} \subset \fM_{\cP}$ is simply connected. Then we have a topological isomorphism
\begin{equation}\label{eq-fullprod}
\MCG(\fM_{\cP}) \cong  \fG_{\cG}   \times_{\zeta} \MCG(\fM_{\cP} ; \whx)   \ .
\end{equation}
  \end{thm}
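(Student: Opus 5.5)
We build a split short exact sequence
\[
1 \to \MCG(\fM_{\cP};\whx) \to \MCG(\fM_{\cP}) \to \fG_{\cG} \to 1 ,
\]
and then identify the semidirect product structure $\times_\zeta$. Since $\fD_{\cG}$ is trivial, the fiber $\fX_0 \cong X_{\cG}$ is identified with the profinite group $\fG_{\cG}$, with $x$ corresponding to the identity. The left action of $\fG_{\cG}$ on itself commutes with the right monodromy action of $\G$. It therefore extends to an action of $\fG_{\cG}$ on $\fM_{\cP} \cong \widetilde{M_0} \times_{\G} \fG_{\cG}$ by fiber-preserving homeomorphisms. This requires the hypotheses: the simply connected baseleaf gives $L_{\whx} \cong \widetilde{M_0}$, so the suspension model holds. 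This yields a continuous homomorphism $\tau \colon \fG_{\cG} \to \Homeo(\fM_{\cP})$, and we define the candidate map $\fG_{\cG} \to \MCG(\fM_{\cP})$ by composing with the quotient.

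\textbf{Step 1: the map to $\fG_{\cG}$.} For $h \in \Homeo(\fM_{\cP})$, we isotope $h$ so that $h(\whx)$ lies in the fiber $\fX_0$. This is possible since we may slide along the leaf of $h(\whx)$, which is path connected, to a point of $\fX_0$. That point is an element $g_h \in \fG_{\cG}$. The choice is well defined only modulo path components of $\fX_0$ in the same leaf, i.e.\ modulo the dense image of $\G$ in $\fG_{\cG}$. So we must show that the subgroup $\tau(\G)$ acts by homeomorphisms isotopic to the identity. Indeed, translation by $\gamma \in \G$ is realized by pushing along a loop in $M_0$ representing $\gamma$ lifted to the leaves, and since left and right actions commute this is an isotopy from the identity to $\tau(\gamma)$.

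Conversely, any isotopy moves $\whx$ within its leaf, so $\MCG(\fM_{\cP}) \to \fG_{\cG}/\G$ needs refining. Following Odden, we instead define the homomorphism $\MCG(\fM_{\cP}) \to \fG_{\cG}$ by choosing a representative of $[h]$ in $\Homeo$ and using $\tau$ to correct. The map $[h] \mapsto [\tau(g_h)^{-1}\circ h] \in \MCG(\fM_{\cP};\whx)$ gives a decomposition $[h] = [\tau(g_h)]\cdot[h']$. The twisting $\zeta$ is the action of $\MCG(\fM_{\cP};\whx)$ on $\fG_{\cG}$ by conjugation, $h' \tau(g) h'^{-1} = \tau(\zeta_{h'}(g))$. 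We prove this by noting that a basepoint-fixing homeomorphism normalizes the centralizer of the monodromy, using $\chi$ and the extension of commensurators to automorphisms of $\fG_{\cG}$.

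\textbf{Main obstacle: injectivity and topology.} The hardest step is showing $\tau(\fG_{\cG}) \cap \Homeo_0(\fM_{\cP})$ is exactly what we must quotient by, and that the resulting identification is a topological isomorphism. This means proving the pointed classes and translations interact cleanly: if $\tau(g)\circ h'$ is isotopic to the identity with $h'$ pointed, then the isotopy tracks $\whx$ along a path in $L_{\whx}$. Since $L_{\whx}$ is simply connected, the path is unique up to homotopy, and the corresponding deck translation identifies $g$ with an element acting trivially. This forces both factors to be trivial after the correction by $\G$.

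I would try first to lift the isotopy to the universal cover $\widetilde{M_0}\times \fG_{\cG}$. There we would use that homotopic maps of inverse limits induce the same zig-zag data (Rogers–Tollefson), which pins down $g$. Continuity of the inverse map would follow from compactness of $\fG_{\cG}$ and continuity of $\tau$.
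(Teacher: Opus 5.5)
Your argument breaks at Step~1, where you assert that $\tau(\gamma)$ is isotopic to the identity for every $\gamma \in \G$. This is false in general.

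Lifting a point-push of $M_0$ around a loop gives an isotopy from the identity to a map that covers a point-pushing homeomorphism of $M_0$, not the identity. That map acts on the fiber over $x_0$ by the monodromy $\Phi(\gamma)$, i.e.\ by the very action that $\tau$ is built to commute with. It agrees with $\tau(\gamma)$ only when $\ogamma$ is central in $\fG_{\cG}$.

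In fact $\tau(\gamma)$ (the paper's $\rho_{\cG}(\ogamma)$, up to the left/right convention) preserves $L_{\whx}$ and acts on it as a deck translation by $\gamma^{\pm 1}$; this is the identity $\rho_{\cG}(\ogamma)\circ C(\gamma) = id$. Compose it with a leafwise push returning the image of $\whx$ to $\whx$. The resulting pointed homeomorphism has image under $\chi$ equal to conjugation by $\gamma^{\pm1}$. For $\G = \pi_1(\Sigma_g)$, which has trivial center and unique roots, this class is non-trivial in $\Comm(\G)$. A pointed homeomorphism isotopic to the identity is isotopic to it rel $\whx$ (Proposition~\ref{prop-baseleaf}, which is where simple connectivity of $L_{\whx}$ is used). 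Hence $\tau(\gamma) \notin \Homeo_0(\fM_{\cP})$.

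Several consequences follow. First, $g_h$ is only defined modulo $\G$, so there is no homomorphism $\MCG(\fM_{\cP}) \to \fG_{\cG}$ and the split sequence you propose does not exist. Even for $\G = \mZ^n$, where the linear flow does make $\tau(\G)$ isotopically trivial, $\tau$ is not injective on mapping classes, and at best one gets a map to $\fG_{\cG}/\G$. Second, your description of $\zeta$ as conjugation of $\tau(\fG_{\cG})$ by pointed classes also fails in general: a commensurator extends only to an isomorphism between open subgroups of $\fG_{\cG}$, not to an automorphism of $\fG_{\cG}$.

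The missing idea is that $\times_{\zeta}$ is not a semidirect product. It is the quotient of $\fG_{\cG} \times \MCG_{BLP}(\fM_{\cP})$ by the diagonal action of $\G$,
$\zeta(\gamma)(\whg,[h]) = (\whg\,\ogamma^{-1}, [C(\gamma)\circ h])$.
This quotient absorbs exactly the $\G$-ambiguity you ran into, by trading a fiber translation by $\ogamma$ for composition with the baseleaf-preserving, generally non-trivial, class of $C(\gamma)$. The paper shows that $(\whg,[h]) \mapsto [\rho_{\cG}(\whg)^{-1}\circ h]$ has three properties:
\begin{enumerate}
\item it descends to this quotient;
\item it is injective, because a fiber translation preserving $L_{\whx}$ must be by some $\ogamma$ (your last paragraph partly anticipates this);
\item it is onto, because $\fG_{\cG}$ acts transitively on the fiber when $\fD_{\cG}$ is trivial, so any $h$ can be corrected to preserve $L_{\whx}$.
\end{enumerate}
Combined with $\MCG(\fM_{\cP};\whx) \cong \MCG_{BLP}(\fM_{\cP})$, this gives a bijection of spaces; the map $\mu_{\cG}$ is explicitly not a homomorphism. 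Note also that the suspension model holds with no hypothesis on $L_{\whx}$. Simple connectivity is needed for the baseleaf-preserving reduction, not for the model.
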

 The action $\zeta$ of $\G$ on  $\MCG(\fM_{\cP} ; \whx)$ is by a leafwise translation, as defined by formula  \eqref{eq-normaction}.

    \begin{cor}\label{cor-homogeneous}
  Let $M_0$ be an ultra Borel   manifold, and assume that  $\fM_{\cP}$ is a homogeneous   solenoid over $M_0$ with a simply connected baseleaf $L_{\whx} \subset \fM_{\cP}$. Then 
  \begin{equation}\label{eq-fullproddyn}
\MCG(\fM_{\cP}) \cong    \fG_{\cG}   \times_{\zeta} \Comm(\fX_0, \G, \Phi, x)  \ .
\end{equation}
  \end{cor}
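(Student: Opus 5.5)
The corollary should follow by combining Theorem~\ref{thm-homogeneous} with Theorem~\ref{thm-surjective}: the first gives the semidirect decomposition of $\MCG(\fM_{\cP})$ in terms of the pointed group, and the second identifies the pointed group with the dynamical commensurator group. The plan is as follows.

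First, check that the hypotheses of both results hold. Since $M_0$ is ultra Borel, Theorem~\ref{thm-surjective} says $M_0$ is aspherical with torsion-free fundamental group $\G$. The solenoid $\fM_{\cP}$ has a simply connected leaf, namely the baseleaf $L_{\whx}$. Hence Theorem~\ref{thm-surjective} gives an isomorphism
\[
\sigma_{\cP,\whx} \colon \MCG(\fM_{\cP};\whx) \to \Comm(\fX_0,\G,\Phi,x).
\]
Since $\fM_{\cP}$ is homogeneous with simply connected baseleaf, Theorem~\ref{thm-homogeneous} gives
\[
\MCG(\fM_{\cP}) \cong \fG_{\cG}\times_{\zeta}\MCG(\fM_{\cP};\whx).
\]

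Second, transport the action $\zeta$ across $\sigma_{\cP,\whx}$. Define
\[
\zeta'(g) = \sigma_{\cP,\whx}\circ \zeta(g)\circ \sigma_{\cP,\whx}^{-1}
\]
on $\Comm(\fX_0,\G,\Phi,x)$. The map $(\gamma,[h]) \mapsto (\gamma,\sigma_{\cP,\whx}([h]))$ is then a group isomorphism
\[
\fG_{\cG}\times_{\zeta}\MCG(\fM_{\cP};\whx) \to \fG_{\cG}\times_{\zeta'}\Comm(\fX_0,\G,\Phi,x).
\]
This is formal: conjugating the action by an isomorphism of the normal factor yields an isomorphic semidirect product. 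The corollary then follows, reading $\zeta$ in \eqref{eq-fullproddyn} as this transported action.

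The one point needing care is whether $\zeta'$ deserves the name $\zeta$, i.e.\ whether it is the natural action on germs of dynamical commensurators. Leafwise translation by an element of $\fG_{\cG}$ (or of $\G$) moves the basepoint and is then conjugated back. On the fiber this acts on $\Comm(\fX_0,\G,\Phi,x)$ by conjugating a germ $h_U$ by the corresponding homeomorphism of $\fX_0$. Naturality of $\sigma_{\cP,\whx}$ under such conjugation should follow from its construction via restriction to the transversal $\fX_0$, because restriction commutes with conjugation. I expect this compatibility check to be the only step with content. If the paper does not prove it, the statement remains valid with $\zeta$ defined through $\sigma_{\cP,\whx}$.

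Finally, if the isomorphism is meant to be topological, as in Theorem~\ref{thm-homogeneous}, give $\Comm(\fX_0,\G,\Phi,x)$ the topology transported via $\sigma_{\cP,\whx}$. The isomorphism above is then a homeomorphism by construction.
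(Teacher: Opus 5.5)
Your proposal is correct and follows the paper's argument. The paper derives the corollary in one line from Theorem~\ref{thm-summary} (the restatement of Theorem~\ref{thm-surjective}, which makes $\sigma_{\cP,\whx}$ an isomorphism) and Theorem~\ref{thm-nonhomogeneous2} (the restatement of Theorem~\ref{thm-homogeneous}), implicitly transporting $\zeta$ across $\sigma_{\cP,\whx}$ as you do.

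One correction to your reading of the notation: in the paper, $\times_{\zeta}$ is not a semidirect product with $\fG_{\cG}$ acting. It is the quotient of the product by the diagonal action of $\G$: right multiplication by $\ogamma^{-1}$ on $\fG_{\cG}$, and composition with $C(\gamma)$ on the mapping classes. Your transport argument still works unchanged, since $\mathrm{id}\times\sigma_{\cP,\whx}$ is equivariant for the transported action and so descends to the quotients.
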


When $\fM_{\cP}$ is not homogeneous, the isotropy subgroup $\fD_{\cG} \subset \fG_{\cG}$ is not trivial, and hence    the analysis of   $\MCG(\fM_{\cP})$    is more subtle. 
The normalizer group    $\fN_{\cG} = \fN_{\fG_{\cG}}(\fD_{\cG})$   was studied in \cite[Section~2]{DHL2016a}, and the quotient group $\fA_{\cG} = \fD_{\cG} \backslash \fN_{\cG}$ was introduced.   There is a right  natural action of $\fA_{\cG}$ on $\Homeo(\fM_{\cP})$ which descends to an action on $\MCG(\fM_{\cP})$.

   Let  $\ogamma \in \fG_{\cG}$ denote the image of $\gamma$ in the completion $\fG_{\cG}$.
 Define     $\G_{\fN} = \{ \gamma \in \G \mid \ogamma \in \fN_{\cG} \}$.    That is, $\G_{\fN}$ is the subgroup consisting of elements in $\G$ which act on both the left and right on the quotient $X_{\cG} = \fG_{\cG}/\fD_{\cG}$. 
There is a natural action of $\G_{\fN}$ on $\fM_{\cP}$ by baseleaf preserving homeomorphisms, as defined in \eqref{eq-normaction}, which we use to define the quotient space $  \fA_{\cG} \times_{\zeta} \Homeo(\fM_{\cP}, L_{\whx})$ in \eqref{eq-qiotientspace}, where $L_{\whx}$ is the leaf  of $\fM_{\cP}$ containing $\whx$.  The proof of the following result is in Section~\ref{sec-normalizer}:

 \begin{thm}\label{thm-nonhomogeneous}
Let $\fM_{\cP}$ be a solenoidal manifold. 
Assume the baseleaf $L_{\whx} \subset \fM_{\cP}$ is simply connected. Then there is an injection of topological spaces
\begin{equation}\label{eq-tau}
  \omu_{\cG} \colon  \fA_{\cG}  \times_{\zeta} \MCG(\fM_{\cP} ; \whx)    \to \MCG(\fM_{\cP})  \ .
\end{equation}
  \end{thm}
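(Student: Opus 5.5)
We will build $\omu_{\cG}$ in three stages: construct a map at the level of homeomorphisms, check that it descends to isotopy classes, and then prove injectivity, which is the real content.

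\emph{Construction.} Work in the group chain model. Write $\fM_{\cP} \cong \widetilde{M_0} \times_{\G} X_{\cG}$, with $X_{\cG} = \fG_{\cG}/\fD_{\cG}$. Since the baseleaf $L_{\whx}$ is simply connected, it is identified with $\widetilde{M_0}$.

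An element $\overline{n} \in \fN_{\cG}$ acts on $X_{\cG}$ on the right by $g\fD_{\cG} \mapsto g n \fD_{\cG}$. This is well defined precisely because $n$ normalizes $\fD_{\cG}$, and it depends only on the class of $n$ in $\fA_{\cG} = \fD_{\cG}\backslash \fN_{\cG}$. The right action commutes with the left $\G$-action, so it induces a homeomorphism $R_{\overline{n}}$ of $\fM_{\cP}$, given by $[\wtx, g\fD_{\cG}] \mapsto [\wtx, gn\fD_{\cG}]$. Each $R_{\overline{n}}$ permutes leaves and sends $\whx$ to a point of $\fX_0$.

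Define $\mu(\overline{n}, h) = R_{\overline{n}} \circ h$ for $h \in \Homeo(\fM_{\cP};\whx)$. The group $\G_{\fN}$ embeds in both factors: on the $\fA_{\cG}$ side via $\gamma \mapsto \ogamma$, and on the $\Homeo(\fM_{\cP}, L_{\whx})$ side via the leafwise translation \eqref{eq-normaction}. One checks directly that $R_{\ogamma}$ agrees with the translation $\zeta(\gamma)$ composed with a basepoint-fixing map. Hence $\mu$ is constant on $\zeta$-orbits and descends to the quotient \eqref{eq-qiotientspace}.

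\emph{Descent to isotopy classes.} Composing $\mu$ with the projection to $\MCG(\fM_{\cP})$ gives $\omu_{\cG}$. It is well defined because if $h_t$ is a pointed isotopy, then $R_{\overline{n}}\circ h_t$ is an isotopy. Continuity follows since $\overline{n} \mapsto R_{\overline{n}}$ is continuous from the profinite group $\fA_{\cG}$ into $\Homeo(\fM_{\cP})$.

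\emph{Injectivity.} This is the main obstacle. Suppose $R_{\overline{n}}\circ h$ is isotopic to $R_{\overline{m}}\circ k$. Then $f = R_{\overline{m}}^{-1}R_{\overline{n}}\, h k^{-1}$ lies in $\Homeo_0(\fM_{\cP})$.

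\begin{enumerate}
\item An isotopy moves points only along leaves, because path components are leaves. So $f$ preserves every leaf, and in particular $f(\whx)$ lies in $L_{\whx} \cap \fX_0$, which is the $\G$-orbit of $x$. Therefore $R_{\overline{m}^{-1}\overline{n}}(\whx) = \gamma\cdot \whx$ for some $\gamma \in \G$, in the sense that $n m^{-1}$ lies in $\fD_{\cG}\,\ogamma$. Since $\fN_{\cG}$ is a group, this forces $\gamma \in \G_{\fN}$ and $\overline{n} = \ogamma\,\overline{m}$ in $\fA_{\cG}$.
\item Replace the pair $(\overline{n},h)$ by its $\zeta(\gamma)$-translate, which represents the same point of the quotient. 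We may then assume $\overline{n} = \overline{m}$, so that $hk^{-1} \in \Homeo_0(\fM_{\cP}) \cap \Homeo(\fM_{\cP};\whx)$.
\item It remains to show that this intersection equals $\Homeo_0(\fM_{\cP};\whx)$. Given a free isotopy $f_t$ from the identity to $hk^{-1}$, the track $t \mapsto f_t(\whx)$ is a loop in $L_{\whx}$ based at $\whx$. Because $L_{\whx}$ is simply connected, this loop is null-homotopic. We would then use a leafwise point-pushing argument, in the style of Birman: since the leaves are locally product charts, a null-homotopy of the track can be realized by an ambient isotopy of $\fM_{\cP}$ supported near a tubular neighbourhood in the foliation. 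This modifies $f_t$ to a pointed isotopy.
\end{enumerate}

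Making the point-pushing argument in step 3 rigorous is the delicate step. It requires that the pushing homeomorphisms can be chosen to depend continuously on the transverse coordinate in a flow box over a clopen set. It also requires handling the fact that the track passes through several charts, which we would do by subdividing the homotopy into small pieces.
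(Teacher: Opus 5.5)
Your proposal is correct and is essentially the paper's argument, reorganized. Your step 1 is the paper's observation (Proposition~\ref{prop-nonhomogeneous} and the proof of Theorem~\ref{thm-normalize}) that a translation by $\fA_{\cG}$ carrying $\whx$ into $L_{\whx}$ comes from some $\gamma \in \G_{\fN}$ and is absorbed by $\zeta$ because $\rho_{\cG}(\ogamma)\circ C(\gamma)=\mathrm{id}$. Your step 3 is exactly the injectivity half of Proposition~\ref{prop-baseleaf}, which the paper also derives from simple connectivity of $L_{\whx}$, straightening the basepoint track via the lifted isotopy rather than by Birman-style point-pushing. The paper first identifies $\MCG(\fM_{\cP};\whx)$ with the baseleaf-preserving mapping class group and then works in $\Homeo(\fM_{\Phi};L_{\whx})$, where $\zeta$ naturally lives. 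In your pointed setup, step 2 implicitly needs a point-push in $\Homeo_0$ back to $\whx$, which is the surjectivity half of Proposition~\ref{prop-baseleaf}. Also, with the paper's right-multiplication $\zeta$, you should define $\mu(\overline{n},h)=R_{\overline{n}}^{-1}\circ h$, matching the paper's $\rho_{\cG}(\whg)^{-1}\circ h$, or else let $\ogamma$ multiply on the left. As written, $R_{\overline{n}}\circ h$ is not $\zeta$-invariant when $\fA_{\cG}$ is nonabelian.
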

  
      \begin{cor}\label{cor-nonhomogeneous}
       Let $M_0$ be an ultra Borel   manifold with a simply connected baseleaf $L_{\whx} \subset \fM_{\cP}$. Then there is an injection of topological spaces 
  \begin{equation}\label{eq-partialproddyn}
 \omu_{\cG} \colon \fA_{\cG} \times_{\zeta} \Comm(\fX_0, \G, \Phi, x)   \to  \MCG(\fM_{\cP}) \ .
\end{equation}
  \end{cor}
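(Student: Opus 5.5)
Corollary~\ref{cor-nonhomogeneous} will follow by combining Theorem~\ref{thm-nonhomogeneous} with Theorem~\ref{thm-surjective}. The plan has three steps.

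\textbf{Step 1: the injection.} The hypotheses of Corollary~\ref{cor-nonhomogeneous} include those of Theorem~\ref{thm-nonhomogeneous}, namely that the baseleaf $L_{\whx}$ is simply connected. So Theorem~\ref{thm-nonhomogeneous} gives an injection of topological spaces
\begin{equation*}
\omu_{\cG} \colon \fA_{\cG} \times_{\zeta} \MCG(\fM_{\cP} ; \whx) \to \MCG(\fM_{\cP}) .
\end{equation*}

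\textbf{Step 2: replacing the pointed mapping class group.} Since $M_0$ is ultra Borel and $\fM_{\cP}$ has a simply connected leaf, Theorem~\ref{thm-surjective} says that
\begin{equation*}
\sigma_{\cP, \whx} \colon \MCG(\fM_{\cP} ; \whx) \to \Comm(\fX_0, \G, \Phi, x)
\end{equation*}
is an isomorphism. The corollary then follows if this isomorphism is compatible with the $\zeta$-twisting in the quotient space \eqref{eq-qiotientspace}. Concretely, we must transport the action $\zeta$ of $\G_{\fN}$ through $\sigma_{\cP, \whx}$ and check that the induced bijection
\begin{equation*}
\fA_{\cG} \times_{\zeta} \MCG(\fM_{\cP} ; \whx) \cong \fA_{\cG} \times_{\zeta} \Comm(\fX_0, \G, \Phi, x)
\end{equation*}
is well defined and a homeomorphism.

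\textbf{Step 3: equivariance and topology (the main obstacle).} The point needing care is that $\sigma_{\cP, \whx}$ intertwines the leafwise translation action $\zeta$ from \eqref{eq-normaction} with an action on $\Comm(\fX_0, \G, \Phi, x)$.

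For $\gamma \in \G_{\fN}$, the translation by $\gamma$ along the baseleaf acts on the Cantor fiber by the right action of $\ogamma \in \fN_{\cG}$ on $X_{\cG} = \fG_{\cG}/\fD_{\cG}$. This right action is well defined because $\ogamma$ normalizes $\fD_{\cG}$. Conjugating a pointed homeomorphism by this translation changes its restricted holonomy germ at $x$ by the corresponding conjugation of germs. We therefore *define* the action $\zeta$ on $\Comm(\fX_0, \G, \Phi, x)$ as conjugation of germs by this right action. Equivariance of $\sigma_{\cP, \whx}$ is then immediate from the construction of $\sigma_{\cP, \whx}$ as the holonomy germ on an adapted neighborhood, since conjugation commutes with restricting to smaller adapted sets.

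For the topology, we give $\Comm(\fX_0, \G, \Phi, x)$ the topology transported from $\MCG(\fM_{\cP};\whx)$, or note that both are discrete. Then the equivariant isomorphism descends to a homeomorphism of the twisted quotients.

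Composing the inverse of this homeomorphism with $\omu_{\cG}$ yields the injection \eqref{eq-partialproddyn}.
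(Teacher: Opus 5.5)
Your overall route is the paper's. There the corollary is read off in one line by substituting the isomorphism $\sigma_{\cP,\whx}$ of Theorem~\ref{thm-summary} (your Theorem~\ref{thm-surjective}) into the injection of Theorem~\ref{thm-normalize}; the twisting on $\Comm(\fX_0,\G,\Phi,x)$ is implicitly the one transported along $\sigma_{\cP,\whx}$. Your Steps 1 and 2 are correct. The remark in Step 2 that you transport $\zeta$ through $\sigma_{\cP,\whx}$ already completes the argument: with the transported action and topology, equivariance and the homeomorphism of twisted quotients are tautological.

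The error is in Step 3, where you replace this transport by an explicit formula that is wrong.

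\begin{itemize}
\item In \eqref{eq-qiotientspace}, $\G_{\fN}$ acts on the second factor by left composition $h\mapsto C(\gamma)\circ h$, not by conjugation.
\item Conjugation by the right action $\rho_{\cG}(\ogamma)$ does not even act on germs at $x$. The map $\rho_{\cG}(\ogamma)$ sends $x$ to $x\cdot\ogamma=\ogamma\cdot x=\Phi(\gamma)(x)$. This is $\ne x$ for $\gamma\ne e$, because a simply connected baseleaf forces $\ker(\cG)=\{e\}$ (Lemma~\ref{lem-simplyconnected}). For the same reason $C(\gamma)(\whx)\ne\whx$.
\item The correct transported action comes from sliding $C(\gamma)(\whx)$ back to $\whx$ along $L_{\whx}$ by some $\psi\in\Homeo_0(\fM_{\cP})$, as in Proposition~\ref{prop-baseleaf}. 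It is left multiplication in $\Comm(\fX_0,\G,\Phi,x)$ by $\sigma_{\cP,\whx}[\psi\circ C(\gamma)]$. This is the germ at $x$ of $y\mapsto\ogamma\, y\,\ogamma^{-1}$ (compare \eqref{eq-normaction}), and $\Theta_{\cG}$ sends it to the class of conjugation by $\gamma$ in $\Comm(\G)$.
\item Even if you replaced $\rho_{\cG}(\ogamma)$ by this $x$-fixing map, conjugating by its germ differs from left multiplying by it whenever the germ is nontrivial. This happens, e.g., for $\widehat{\Sigma_g}$, where every $\gamma\ne e$ gives a nontrivial inner class in $\Comm(\G_g)$. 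So your claimed ``immediate'' equivariance of $\sigma_{\cP,\whx}$ is false, and with your definition the map of twisted quotients would not be well defined.
\end{itemize}

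The fix is to drop the formula and simply define $\zeta$ on $\Comm(\fX_0,\G,\Phi,x)$ as $\sigma_{\cP,\whx}\circ\zeta(\gamma)\circ\sigma_{\cP,\whx}^{-1}$, with the transported topology. Also drop the aside that both groups are discrete: nothing in the paper shows that $\Homeo_0(\fM_{\cP};\whx)$ is open, and the claim is not needed.
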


     Section~\ref{sec-examples}   gives a collection of examples of  calculations of $\MCG(\fM_{\cP})$ using the results of this paper.

 The assumption there is a simply connected leaf in $\fM_{\cP}$  implies the associated odometer action $(X_{\cG}, \G, \Phi)$ is regular. Without this assumption, the map $\Theta_{\cP}$ in Diagram~\ref{eq-diagram} may not exist.  Thus, the following problem is interesting, especially when  $\G$ contains a non-trivial torsion subgroup.
  
  \begin{problem}\label{prob-notcoh}
  Let $(\fX, \G, \Phi)$ be an odometer that is not coherent. What is the relation between the dynamical commensurator $\Comm(\fX, \G, \Phi, x)$ and the abstract commensurator $\Comm(\G)$?
  \end{problem}

\section{Solenoidal manifolds}\label{sec-solenoids}

Let $M_0$ denote  a compact connected manifold without boundary, and   $\G = \pi_1(M_0,x_0)$ for   $x_0 \in M_0$. 

Let  $\cG = \{\G = \G_0 \supset \G_1 \supset \G_2 \supset \cdots\}$ be a descending chain of subgroups of finite index.  
The group chain $\cG$ and   $x_0\in M_0$ determine a tower of finite index coverings by closed manifolds:
\begin{equation}\label{eq-presentation2}
\cP_{\cG}  \equiv ~  \left\{ M_0   \stackrel{p_1}{\longleftarrow}    M_1    \stackrel{p_2}{\longleftarrow}  M_2   \stackrel{p_3}{\longleftarrow}      \cdots  \right\} \ . 
\end{equation}
  The inverse limit space 
 \begin{eqnarray} 
\fM_{\cP} & \equiv &  \varprojlim  ~ \{ p_{\ell +1} \colon M_{\ell +1} \to M_{\ell}\} \label{eq-presentationinvlim} \\
& = &  \{(y_0, y_1, \ldots ) \in \fM_{\cP}  \mid p_{\ell +1 }(y_{\ell + 1}) =  y_{\ell} ~ {\rm for ~ all} ~ \ell \geq 0 ~\} ~ \subset \prod_{\ell \geq 0} ~ M_{\ell} ~   \nonumber
\end{eqnarray}
  is given  the relative  topology, induced from the product (Tychonoff) topology, so that $\fM_{\cP}$ is itself compact and connected.  
Given the basepoint $x_0 \in M_0$, for each $\ell > 0$ choose    $x_{\ell +1} \in M_{\ell +1}$  such that $p_{\ell +1}(x_{\ell+1}) = x_{\ell}$. 
Then for  each $\ell \geq 0$, there is a fibration $\whq_{\ell} \colon \fM_{\cP} \to M_{\ell}$, where each fiber   $\fX_{\ell} = \whq_{\ell}^{-1}(x_{\ell})$   a Cantor space. 
  The   manifold $M_0$ is called the base of the tower $\cP_\cG$, and $\whx = (x_0, x_1, x_2, \ldots) \in \fM_{\cP}$   determines a basepoint in the limit.

McCord observed in \cite{McCord1965} that     $\fM_\cP$ has a local product structure, and   the path-connected components of $\fM_\cP$   define a foliation denoted by $\FP$.   That is, $\fM_\cP$ is an  $n$-dimensional  foliated space  in the sense of \cite{MS2006}, for which  the leaves of $\F_\cP$ are the path connected components of $\fM$, and the local transversals to the foliation  are totally disconnected.         
  
  We recall an elementary result about  solenoids.   For $\ell \geq 0$, introduce the truncated presentation
  \begin{equation}\label{eq-presentationell}
\cP_{\ell}  \equiv ~  \left\{ M_{\ell}   \stackrel{p_{\ell+1}}{\longleftarrow}    M_{\ell+1}    \stackrel{p_{\ell+2}}{\longleftarrow}  M_{\ell +2}   \stackrel{p_{\ell +3}}{\longleftarrow}      \cdots  \right\} \ . 
\end{equation}
 
 \begin{lemma}\label{lem-canonical}
 For all $\ell \geq 0$ there is a canonical homeomorphism $\sigma_{\ell} \colon \fM_{\cP_{\ell}} \cong  \fM_{\cP}$.
 \end{lemma}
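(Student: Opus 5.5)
The natural approach is to write down the map explicitly and check that it is a homeomorphism directly from the definition of the inverse limit topology. A point of $\fM_{\cP_{\ell}}$ is a sequence $(y_\ell, y_{\ell+1}, \ldots)$ with $p_{k+1}(y_{k+1}) = y_k$ for all $k \geq \ell$. Define
$$\sigma_{\ell}(y_\ell, y_{\ell+1}, \ldots) = \bigl(p_1\circ \cdots \circ p_\ell(y_\ell), \ \ldots, \ p_\ell(y_\ell), \ y_\ell, \ y_{\ell+1}, \ldots\bigr),$$
that is, we fill in the first $\ell$ coordinates by pushing $y_\ell$ down the tower. The compatibility conditions $p_{k+1}(y_{k+1}) = y_k$ then hold for $k < \ell$ by construction and for $k \geq \ell$ by hypothesis. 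Hence $\sigma_\ell$ lands in $\fM_{\cP}$. The candidate inverse is the truncation map $\tau_\ell \colon (y_0, y_1, \ldots) \mapsto (y_\ell, y_{\ell+1}, \ldots)$, which clearly lands in $\fM_{\cP_\ell}$.

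The key steps, in order, are as follows. First, $\tau_\ell \circ \sigma_\ell = {\rm id}$ is immediate. Second, $\sigma_\ell \circ \tau_\ell = {\rm id}$ holds because in a point of $\fM_{\cP}$ the coordinates $y_0, \ldots, y_{\ell-1}$ are forced by $y_\ell$ via $y_k = p_{k+1}\circ\cdots\circ p_\ell(y_\ell)$. So the two maps are mutually inverse bijections.

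Third, we check continuity. Both spaces carry the relative product topology, so a map into either is continuous iff each coordinate function is continuous. The coordinates of $\tau_\ell$ are restrictions of coordinate projections. The coordinates of $\sigma_\ell$ are either projections or compositions of a projection with the continuous covering maps $p_k$. Alternatively, since $\fM_{\cP_\ell}$ is compact and $\fM_{\cP}$ is Hausdorff, continuity of $\sigma_\ell$ alone suffices to conclude it is a homeomorphism.

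Finally, canonicity amounts to the observation that no choices are involved. Moreover, $\sigma_\ell$ respects the basepoints $\whx$ and the projections, in the sense that $\whq_k \circ \sigma_\ell$ is the $k$-th projection of $\fM_{\cP_\ell}$ for $k \geq \ell$. It also maps path components to path components, so it preserves the foliations. There is no real obstacle here. The only point deserving care is making sure the formula for the first $\ell$ coordinates is consistent, which the composition of the covering maps guarantees.
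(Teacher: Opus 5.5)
Your proposal is correct and is essentially the paper's proof: the paper defines $\sigma_\ell$ by taking the $k$-th coordinate to be $p^{\ell+k}_{k}(y_{\ell+k})$ (with $p^{\ell'}_{\ell} = p_{\ell+1}\circ\cdots\circ p_{\ell'}$), and by the compatibility relations this is exactly your map, namely $p^{\ell}_{k}(y_\ell)$ for $k<\ell$ and $y_k$ for $k\ge \ell$. The paper only asserts that $\sigma_\ell$ is a homeomorphism as standard, and your truncation inverse together with the coordinatewise-continuity (or compact--Hausdorff) argument supplies the omitted details.
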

 \proof
 Given $\why = (y_{\ell}, y_{\ell +1}, y_{\ell +2}, \ldots) \in \fM_{\cP_{\ell}}$ define the shift map on indices, 
 $$\sigma_{\ell}(y_{\ell}, y_{\ell+1}, y_{\ell+2}, \ldots) = (p^{\ell}_0(y_{\ell}), p^{\ell+1}_1(y_{\ell+1}), p^{\ell+2}_2(y_{\ell+2}), \ldots) \in \fM_{\cP} \ , $$
 where $p^{\ell'}_{\ell} = p_{\ell+1} \circ \cdots \circ p_{\ell' -1} \circ p_{\ell'} \colon M_{\ell'} \to M_{\ell}$. 
It is standard that $\sigma_{\ell}$ is a homeomorphism onto.
 \endproof

 \section{The characteristic map}\label{sec-char}

In this section we prove Theorem~\ref{thm-chi}. That is,   given a solenoidal manifold $\fM_\cP$ with presentation $\cP$, we construct the characteristic map $\chi \colon \MCG(\fM_{\cP} ; \whx)  \to \Comm(\G)$.
   
 Fix a metric $d_{\fM}$ on $\fM_{\cP}$ compatible with the inverse limit topology.
A map $H \colon \fM_{\cP} \times [0,1] \to \fM_{\cP}$ is said to be an $\e$-homotopy if the trace of each point is $\e$-bounded. That is, for all $\why \in \fM_{\cP}$ we have $d_{\fM}(H(\why, s), H(\why, t)) < \e$ for all $s,t \in [0,1]$.  
We use a fundamental   result   of Rogers and Tollefson:
    \begin{thm}\cite[Corollary~5.2]{RT1972b}\label{thm-RT}
     Let $M_0$ be a closed connected manifold, and  $\fM_{\cP}$ be a solenoidal manifold over $M_0$ defined by a presentation $\cP$.  Given a self-homeomorphism  $h \colon \fM_{\cP} \to \fM_{\cP}$ and $\e > 0$,  then  $h$ is $\e$-homotopic to a continuous map   induced by a \emph{shuffle map} of $\cP$. That is, there exists    an increasing sequence   $i_{\ell} \geq \ell$, and continuous maps $\varphi_{i_\ell} \colon M_{i_{\ell}} \to M_{\ell}$  so that the ``zig-zag'' diagram \eqref{eq-shuffle} commutes
 \begin{align} \label{eq-shuffle}
 \xymatrix{ M_{0}   & \ar[l] M_{i_0} \ar[ld]^{\varphi_{i_0}}& \ar[l] M_{i_1} \ar[ld]^{\varphi_{i_1}} & \ar[l] M_{i_2} \ar[ld]^{\varphi_{i_2}}  & \ar[l] \cdots \\
M_0 & \ar[l] M_1  & \ar[l] M_2 & \ar[l] M_3   & \ar[l]\cdots } 
\end{align}
and the self-map   $\whvp \colon \fM_{\cP} \to \fM_{\cP}$ induced by the collection $\{\varphi_{i_{\ell}} \mid \ell \geq 0\}$ is $\e$-homotopic to $h$. Moreover, for each $\ell \geq 0$, the map $\varphi_{i_{\ell}}$ induces a monomorphism between the fundamental groups of $M_{i_{\ell}}$ and $M_{\ell}$ whose image has finite index.
    \end{thm}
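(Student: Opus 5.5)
The plan is to follow Rogers and Tollefson's strategy: first approximate $h$ by a map that respects the inverse system, then deform that map level by level into a genuine shuffle map.

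\emph{Step 1: approximate $h$ at a single level.} Fix $\e>0$ and choose $\ell_0$ so that points of $\fM_{\cP}$ with the same image in $M_{\ell_0}$ lie within $\e/2$ of each other. This is possible by the definition of the product topology. Each $M_\ell$ is a closed manifold, hence an ANR. So for each $\ell$, the map $\whq_\ell\circ h\colon \fM_{\cP}\to M_\ell$ factors up to small homotopy through some $M_{j}$: by compactness and uniform continuity, there is $j=j(\ell)\ge \ell$ and a map $\psi\colon M_j\to M_\ell$ with $\psi\circ\whq_j$ uniformly close to $\whq_\ell\circ h$. Here one uses that $\fM_{\cP}=\varprojlim M_j$, so maps from the limit into an ANR are induced, up to small homotopy, from a finite stage (a Mioduszewski-type argument).

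\emph{Step 2: upgrade to a commuting zig-zag.} The maps $\psi$ obtained at different levels only commute up to small homotopy. To make the diagram \eqref{eq-shuffle} commute strictly, use the covering maps. Given $\varphi_{i_\ell}\colon M_{i_\ell}\to M_\ell$, the composite $\varphi_{i_\ell}\circ p$ on a deeper level $M_{k}$ lifts to the covering $M_{\ell+1}\to M_\ell$ exactly when the induced subgroup of $\pi_1$ lands in the image of $\pi_1(M_{\ell+1})$. Since $h$ is a homeomorphism, the approximate factorization at level $\ell+1$ shows that $\varphi_{i_\ell}\circ p^{k}_{i_\ell}$ is homotopic to a map that does lift. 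By the homotopy lifting property we may then replace the approximation at level $\ell+1$ by an actual lift $\varphi_{i_{\ell+1}}$ with $p_{\ell+1}\circ\varphi_{i_{\ell+1}}=\varphi_{i_\ell}\circ p^{i_{\ell+1}}_{i_\ell}$. Inductively this yields the shuffle maps, with increasing indices $i_\ell\ge\ell$. Since each step is a lift, the level-$\ell$ maps remain uniformly close to $\whq_\ell\circ h$. This closeness, together with the choice of $\ell_0$, gives a small homotopy from $\whvp$ to $h$, built by concatenating the lifted straight-line (geodesic) homotopies in the $M_\ell$ and using that the errors at deep levels cost little in $d_{\fM}$.

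\emph{Step 3: injectivity and finite index on $\pi_1$.} Apply the same construction to $h^{-1}$, giving a shuffle $\{\varphi'\}$. The composite zig-zags $\varphi\circ\varphi'$ and $\varphi'\circ\varphi$ are then homotopic to the bonding maps $p^{k}_{\ell}$, by passing small homotopies of the limit down to finite levels. Since $p^{k}_{\ell}$ is injective on $\pi_1$ with finite index image, $(\varphi_{i_\ell})_*$ is injective, and its image contains a finite index subgroup, so it has finite index.

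I expect the main obstacle to be Step 2, namely the compatibility of lifts along the whole tower, because the coverings need not be regular. Basepoint bookkeeping with $\whx$ must be used so that the lifts at each stage are chosen consistently.
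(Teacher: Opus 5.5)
The paper gives no proof of this statement. It is quoted from Rogers and Tollefson (Corollary~5.2 of their paper), and the proof environment that follows it is really the construction of $\chi$ for Theorem~\ref{thm-chi}.

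\textbf{Steps 1 and 2 are sound.} They reconstruct the Rogers--Tollefson argument correctly. If $\psi\colon M_k\to M_{\ell+1}$ approximates $\whq_{\ell+1}\circ h$, then $p_{\ell+1}\circ\psi$ is uniformly close to $\varphi_{i_\ell}\circ p^{k}_{i_\ell}$, using that $\whq_k$ is surjective. Lifting the geodesic homotopy through $p_{\ell+1}$ then gives an exact lift $\varphi_{i_{\ell+1}}$ near $\psi$. With pulled-back metrics the error stays uniformly below the convexity radius, so neither regularity of the coverings nor basepoints play any role.

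Two small corrections:
\begin{itemize}
\item The homotopy $\whvp\simeq h$ is the leafwise lift of a single geodesic homotopy in $M_0$, not a concatenation over levels.
\item What makes it $\e$-small is that a leafwise path that is short in $M_0$ lies in one plaque. Your choice of $\ell_0$ is not the relevant smallness condition.
\end{itemize}
Also, Step~2 never uses that $h$ is a homeomorphism, even though you invoke it there.

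\textbf{The gap is injectivity in Step 3.} This is exactly where the homeomorphism property is needed.

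The level-$k$ map of $\whvp'\circ\whvp$ is $\varphi'_{i'_k}\circ\varphi_{i_{i'_k}}$. Comparing it with $p^{i_{i'_k}}_k$ shows $(\varphi_{i_m})_*$ is injective only for $m$ in the subsequence $\{i'_k\}$. The finite-index part is fine at every level, since the level-$\ell$ map of $\whvp\circ\whvp'$ is $\varphi_{i_\ell}\circ\varphi'_{i'_{i_\ell}}$.

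For other $\ell$, including $\ell=0$ (the level the paper uses to define $\chi$), the commuting squares $(\varphi_{i_\ell})_*\circ(p^{i_m}_{i_\ell})_*=(p^m_\ell)_*\circ(\varphi_{i_m})_*$ give less. They only show that $\ker(\varphi_{i_\ell})_*$ meets a finite-index subgroup trivially, i.e.\ that the kernel is finite. That suffices when $\pi_1(M_0)$ is torsion-free, but the statement is for arbitrary closed $M_0$.

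The $\pi_1$-relations you use are in fact consistent with a nontrivial kernel. Take $M_0=S^1\times\mathbb{RP}^2$ and $\G_\ell=2^\ell\mZ\times\{0\}$ for $\ell\ge1$. Collapsing the $\mathbb{RP}^2$ factor at level $0$ kills the torsion. Yet it lifts compatibly to maps that are $\pi_1$-injective at every level $\ge1$.

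\textbf{How to close the gap.} Argue directly with the homeomorphism:
\begin{enumerate}
\item Suppose $[\gamma]\in\ker(\varphi_{i_\ell})_*$. Lift $\gamma$ through $\whq_{i_\ell}$ to a leafwise path $\tilde\gamma$ from $\hat z$ to $\hat z'$ in a leaf $L$.
\item The path $\whq_\ell\circ h\circ\tilde\gamma$ is uniformly close to the null-homotopic loop $\varphi_{i_\ell}\circ\gamma$. Lifting through the covering $\whq_\ell\colon h(L)\to M_\ell$ shows that $h\circ\tilde\gamma$ is homotopic rel endpoints to a leafwise path $\beta$ of length at most twice the approximation error.
\item Take the error small compared with the modulus of continuity of $h^{-1}$; shrinking $\e$ is harmless. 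Then $h^{-1}(\beta)$ lies in a single plaque.
\item Hence $\hat z=\hat z'$, and $\tilde\gamma\simeq h^{-1}(\beta)$ is null-homotopic in $L$, so $[\gamma]=1$.
\end{enumerate}
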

\proof  
 For each $\ell \geq 0$, there is the proper covering map $p_0^{\ell} = p_{\ell} \circ \cdots \circ p_1 \colon M_{\ell} \to M_0$, and the projection map 
  $\whq_{\ell} \colon \fM_{\cP} \to M_{\ell}$  onto the factor $M_{\ell}$ in \eqref{eq-invlimit}.  
  Let $\whx \in \fM_{\cP}$ be a choice of basepoint, and  set $x_{\ell} = \whq_{\ell}(\whx) \in M_{\ell}$  and set $\G_0 = \pi_1(M_0, x_0)$. For each $\ell > 0$,  $\G_{\ell}$ is the image of $(p_0^{\ell})_{\#} \colon \pi_1(M_{\ell}, x_{\ell}) \to \G_0$, so $\cG = \{\G_{\ell} \mid \ell \geq 0 \}$ is a group chain in $\G_0$.
  
  Let $d_{M_0}$ denote a Riemannian metric on $M_0$ and let $\delta_0 > 0$ be sufficiently small so  that, for all $y \in M_0$, the closed metric ball $B_{M_0}(y, \delta_0) \subset M_0$ of radius $\delta_0$ is convex. In particular, $\delta_0$ is less than the injectivity radius of $M_0$. It follows that for each $y \in M_0$ and $y' \in  B_{M_0}(y, \delta_0)$ there is an isomorphism $\iota_{y',y} \colon \pi_1(M_0, y') \cong \pi_1(M_0, y)$ induced by concatenation with a geodesic path  in $B_{M_0}(y, \delta_0)$  from $y'$ to $y$.

   Finally, since the projection $\whq_0 \colon \fM_{\cP} \to M_0$  is uniformly continuous, we can choose  $\e >0$   sufficiently small so that  $\whq_0(B_{\fM}(\why, \e)) \subset B_{M_0}(y, \delta_0/3)$ for all $\why \in \whq_0^{-1}(y)$, where $B_{\fM}(\why,\e)$ is a ball around $\why \in \fM_\cP$ with respect to the metric $d_\fM$.

  Given $h \in \Homeo(\fM_{\cP}; \whx)$, we first define $\chi(h) \in \Comm(\G)$. We then   show that $\chi(h)$ is independent of choices, and depends only on the basepoint fixed homotopy class of $h$.
   
Let $h \colon \fM_{\cP} \to \fM_{\cP}$, then by Theorem~\ref{thm-RT} there exists an increasing  sequence $i_{\ell} \geq \ell$, $\ell \geq 0$,  and maps $\varphi_{i_\ell} \colon M_{i_{\ell}} \to M_\ell$  so that the shuffle diagram \eqref{eq-shuffle} commutes, and for  
 the map  $\whvp \colon \fM_{\cP} \to \fM_{\cP}$ defined by the maps $\{\varphi_{i_{\ell}} \mid \ell \geq 0\}$, we have that $h$ and $\whvp$ are $\e$-homotopic.  Let $\why = \whvp(\whx)$ and set $y_{\ell} = \whq_{\ell}(\why) \in M_{\ell}$.
Then we have the diagram:
  \begin{align} \label{eq-defchi}
 \xymatrix{  \pi_1(  M_{i_0} , x_{i_0}) \ar[d]_{(p_0^{i_0})_{\#}} \ar[r]^{(\vp_{i_0})_{\#}} &   \pi_1(M_{0} ,y_0)  \ar[r]_{\cong}^{\iota_{y',y}}  &  \pi_1(M_0 , x_0)   \\
\pi_1(M_0 , x_0) &  &   } 
\end{align}
where $(p_0^{i_0})_{\#} \colon  \pi_1(  M_{i_0} , x_{i_0})  \to \G_{i_0} \subset \G_0$ is an isomorphism onto its image $\G_{i_0}$.
Let $h_{\#} \colon \G_{i_0} \to \G_0$ be  the commensurator defined by the inverse of $(p_0^{i_0})_{\#}$ followed by   $\iota_{y',y} \circ (\vp_{i_0})_{\#}$.  Define  $\chi(h) \in \Comm(\G_0)$ to be the commensurator class of $h_{\#}$.
 
Next, we show that $\chi(h)$ is independent of the choice of an approximation to $h$ by an induced map. 
Suppose there are given two sequences $\{\varphi_{i_\ell} \colon M_{i_{\ell}} \to M_{\ell} \mid \ell \geq 0\}$ and $\{\psi_{j_\ell} \colon M_{j_{\ell}} \to M_{\ell} \mid \ell \geq 0\}$,   so that the shuffle diagram \eqref{eq-shuffle} commutes for each.
  Moreover,   assume that   the induced maps   $\whvp \colon \fM_{\cP} \to \fM_{\cP}$ and  $\whpsi \colon \fM_{\cP} \to \fM_{\cP}$   are both $\e$-homotopic to $h$. Thus $\whvp$ and $\whpsi$ are $2\e$-homotopic.

    Let $\why = \whvp(\whx)$ then set  $y_{k} = \whq_{k}(\why) \in M_{k}$ for   $k \geq 0$. Likewise, let
      $\whz = \whpsi(\whx)$ and set  $z_{k} = \whq_{k}(\whz) \in M_{k}$ for   $k \geq 0$.
      Choose an index $k_0 > \max\{i_0, j_0\}$. 
  Then we have two maps:
 \begin{eqnarray*}
\Psi_{\whvp} \colon & \fM_{\cP} \stackrel{\whq_{k_0}}{\longrightarrow} M_{k_0} \stackrel{p^{k_0}_{i_0}}{\longrightarrow} M_{i_{0}} \stackrel{\vp_{i_0}}{\longrightarrow}   M_0\\
\Psi_{\whpsi}  \colon & \fM_{\cP} \stackrel{\whq_{k_0}}{\longrightarrow} M_{k_0} \stackrel{p^{k_0}_{j_0}}{\longrightarrow} M_{j_{0}} \stackrel{\psi_{j_0}}{\longrightarrow}   M_0   ~ .
\end{eqnarray*}
Now, given $\alpha \in \pi_1(M_{k_0}, x_{k_0})$ choose a  closed curve $\gamma$ in $M_{k_0}$ with basepoint $x_{k_0}$ representing $\alpha$.

Let $\wtgamma$ be the lift of $\gamma$ to a curve in   the leaf $L_{\whx}$ in $\fM_{\cP}$ starting at $\whx$ and which covers $\gamma$.

The images   $\Psi_{\whvp}(\wtgamma)$ and $\Psi_{\whpsi}(\wtgamma)$ in   $M_0$  of the curve  $\wtgamma$ in   $\fM_{\cP}$ then lie within $2\delta_0/3$ of each other. By our choice of $\delta_0$,  the  images of the  curves   lie in a convex neighborhood of each other, and hence are homotopic. Thus, they determine the same equivalence class in $\pi_1(M_0, x_0)$. This implies that the definition of $\chi(h)$ on elements of $\G_{k_0}$ agrees using either approximation $\whvp$ or $\whpsi$, and $\chi(h)$ does not depend on the choice of an approximating homotopy.

Finally, let $\{h_t \colon \fM_{\cP} \to \fM_{\cP} \mid 0 \leq t \leq 1\} \subset \Homeo(\fM_{\cP};  \whx)$ be an isotopy. Let $\e > 0$ be as above, then there exists $\lambda > 0$ such that   $| s-t| < \lambda$ implies $d_{\fM}[h_s , h_t] < \e$, where $d_{\fM}[\cdot , \cdot]$ denotes the sup-norm on homeomorphisms induced by $d_{\fM}$.  Then for $n \gg 0$ so that $1/n < \lambda$ we form the sequence of isotopic homeomorphisms $\{h_{i/n} \mid 0 \leq i \leq n\}$ with $d_{\fM}[h_{i/n} , h_{(i+1)/n}] < \e$.
 As above, for each $i$ choose an induced tower map $\whvp_i$ which approximates $h_{i/n}$.  Then $\whvp_i$ and $\whvp_{i+1}$ are $3\e$-homotopic, and so $\chi(\whvp_i)$ and $\chi(\whvp_{i+1})$ determine the same class in $\Comm(\G)$. It follows that 
 $\chi(\whvp_0)$ and $\chi(\whvp_n)$ determine the same class in $\Comm(\G)$.  This shows that $\chi(h)$ depends only on the isotopy class of $h$ hence $\chi$ descends to  a map on $\MCG(\fM_{\cP}, \whx)$. This completes the proof of Theorem~\ref{thm-chi}.
 \endproof
 
 \begin{remark}
{\rm
 The conclusion of Theorem~\ref{thm-chi} can be reformulated in terms of pro-categories.
Theorem~\ref{thm-RT} implies that a homeomorphism $h \colon \fM_{\cP} \to \fM_{\cP}$ induces a map $\{ \vp_{i_{\ell}} \}$ in the pro-category of spaces. By the local contractibility of the base $M_0$ the pro-map induces a pro-map in the category of groups, which induces the map $\chi$. This is the approach followed in  \cite{BeringStudenmund2024}.
} 
\end{remark}

\section{Algebraic odometers and dynamical commensurators} \label{sec-odometers}
 
We give the proof of Theorem \ref{thm-charmap}, after first recalling some of the basic notions of odometers defined by group chains \cite{CortezPetite2008,CortezMedynets2016}.  An \emph{action} of a group $\G$ on a topological space $\fX$ is given by a homomorphism $\Phi:\G \to \Homeo(\fX)$. This gives an assignment, denoted by the same letter
$$\Phi: \G \times \fX \to \fX: (g,y) \mapsto \Phi(g)(y) \ , \quad  g \cdot y : = \Phi(g)(y) \ .$$

The action  is \emph{effective} if  $\Phi \colon \G \to \Homeo(\fX)$ is injective.

An action $\Phi: \G \times \fX \to \fX$ is \emph{minimal} if for every $y \in \fX$ the orbit $O(y) = \{g \cdot y : g \in \G\}$ is dense in $\fX$. Let $d_{\fX}$ be a metric on $\fX$ compatible with the topology. An action $\Phi$ is \emph{equicontinuous} if for every $\epsilon > 0$ there exists $\delta > 0$ such that for all $y, z \in \fX$ with $d_{\fX}(y,z) < \delta$ and all $g \in \G$ we have $d_{\fX}(g \cdot y, g\cdot z) < \epsilon$.  An isometric action is equicontinuous.

\begin{defn}\label{def-odometer}
Let $\G$ be a finitely generated group and $\fX$ be a Cantor space. Let $d_\fX$ be a metric on $\fX$ compatible with the topology. We say that an action   
$\Phi \colon \G \times \fX \to \fX$ is an \emph{odometer} if it is minimal and equicontinuous.
\end{defn}
These general actions are called \emph{subodometers} in   Cortez \cite{CortezPetite2008}, and elsewhere called  \emph{generalized odometers}. We use the briefer terminology ``odometer''  for  all such actions.

 Let $\cG = \{ \G = \G_0 \supset \G_1 \supset \G_2  \supset \cdots\}$ be a group chain in a finitely generated group  $\G$.  
 We associate to $\cG$ a left odometer action of $\G$ on a Cantor space $X_{\cG}$ as follows.
  
For $\ell \geq 0$ let   $X_{\ell} = \G/\G_{\ell}$ as a left  $\G$-space.   The inclusion   of cosets induces a projection map 
$p_{\ell+1} \colon X_{\ell+1} \to X_{\ell}$ which commutes with the left action of $\G$. Introduce the inverse limit space
\begin{equation}\label{eq-invcosetlim}
X_{\cG}   \equiv    \lim_{\longleftarrow} ~ \{ p_{\ell +1} \colon X_{\ell +1} \to X_{\ell}  \mid \ell \geq 0 \}    \subset    \prod_{\ell \geq 0} ~ X_{\ell}     \ .
\end{equation}
A point $y = (y_0, y_1, \ldots ) \in X_{\cG}$ if $p_{\ell +1}(y_{\ell +1}) =  y_{\ell}$ for all $\ell \geq 0$.
The   $\G$-action on  each factor $X_{\ell}$ induces the  $\G$-action   on $X_{\cG}$. That is, $g \in \G$ acts coordinate-wise, so  $g \cdot y = (g\cdot y_0, g\cdot y_1, \ldots)$.

Let $d_{\ell}$ denote the indicator metric on $X_{\ell}$, so $d_{\ell}(y_{\ell} ,z_{\ell}) = 0$ if $y_{\ell}=z_{\ell}$, and $d_{\ell}(y_{\ell},z_{\ell})=1$ for $y_{\ell} \ne z_{\ell} \in X_{\ell}$. This metric is invariant under the $\G$-action on $X_{\ell}$. Define a $\G$-invariant metric $d_X$ on $X_{\cG}$ by setting, for $y,z \in X_{\cG}$, 
$\ds d_X(y,z) = \sum_{\ell \geq 0} 2^{-\ell} d_{\ell}(y_{\ell}, z_{\ell})$. 
The action $\Phi \colon \G \times X_{\cG} \to X_{\cG}$  is isometric.

 Let    $e \in \G$ denote the identity, and set 
\begin{equation}\label{def-eql}
U_{\ell} = \{ y \in X_{\cG} \mid y_0 =e, \ldots, y_{\ell -1} = e\} \ ,
\end{equation}
 where we denote also by $e$ the coset of the identity in $X_\ell$.
The collection $\{U_{\ell} \mid \ell \geq 0\}$ forms a clopen neighborhood basis of the basepoint $x = (e,e,\ldots) \in X_{\cG}$, where $\cap_{\ell > 0} \ U_{\ell} = \{x\}$.  The collection of $\G$-translates of this neighborhood basis forms a basis of clopen sets for the topology on $X_{\cG}$, which is a Cantor space.
The action of $\G$ on   $X_{\ell}$  is transitive, so the action $\Phi \colon \G \times X_{\cG} \to X_{\cG}$  is minimal and equicontinuous, hence    is  an odometer.

 A clopen set  $U \subset X_{\cG}$ is \emph{adapted} if $U \ne \emptyset$, and  for all $\gamma \in \G$,  if  $\gamma \cdot U \cap U \ne \emptyset$ then $\gamma \cdot U = U$. It follows that 
   $\G_U = \{\gamma \in \G \mid \gamma \cdot U = U\}$   is a subgroup of finite index in $\G$. Observe that $\G_{U_{\ell}} = \G_{\ell}$.

  Associated to an odometer $\Phi \colon \G \times X_{\cG} \to X_{\cG}$ is a profinite group $\fG_{\Phi}$ and an  action $\oPhi \colon \fG_{\Phi} \times X_{\cG} \to X_{\cG}$.  The group $\fG_{\Phi}$ can be defined as the closure of the image of $\Phi \colon \G \to \Homeo(X_{\cG})$ in the uniform topology, which is called the Ellis semi-group for the action \cite{Ellis1960,Auslander1988}. In our case, $\fG_{\Phi}$ is a group since the action is equicontinuous. There is an alternate   description of $\fG_{\Phi}$   as follows, see \cite{DHL2016a} for details.
  
    The \emph{normal core} of   $H \subset \G$ is the largest normal subgroup  $C(H) \subset H$. 
  For each subgroup $\G_{\ell}$ its normal core is given by
  $C_{\ell} = C(\G_{\ell}) = \bigcap_{g \in \G} g \G_{\ell} g^{-1} $
  which is identified with the kernel of the action map $\Phi_{\ell} \colon \G_{\ell} \to \Aut(\G/\G_{\ell})$. 
  Let $G_{\ell} = \G/C_{\ell}$ then the inverse limit 
  \begin{equation}\label{eq-invgrplim}
\fG_{\cG}   \equiv    \lim_{\longleftarrow} ~ \{ p_{\ell +1} \colon G_{\ell +1} \to G_{\ell}  \mid \ell \geq 0 \}    \subset    \prod_{\ell \geq 0} ~ G_{\ell}     
\end{equation}
is a profinite group with a left  isometric  $\fG_{\cG}$-action on $X_{\cG}$ denoted by $\oPhi \colon \fG_{\cG} \times X_{\cG} \to X_{\cG}$. The group $\fG_\cG$ is isomorphic to $\fG_\Phi$ (see \cite{DHL2016a}.) As the    action of $\G$ on $X_{\cG}$ is minimal,  the   action $\oPhi$ of $\fG_{\cG}$ on $X_{\cG}$ is transitive. 
There is a natural map $\iota_{\cG} \colon \G \to \fG_\cG: g \mapsto (g,g,\ldots)$, which is   induced from  the quotient maps $\G \to  G_{\ell}$, and we denote also by $g$ the coset of $g$ in $G_\ell$.
\begin{lemma}\label{lem-injective}
 $\iota_{\cG} \colon \G \to \fG_{\cG}$ is injective if and only if $\bigcap_{\ell > 0} C_{\ell} = \{e\}$, where $e$ is the identity in $\G$.
\end{lemma}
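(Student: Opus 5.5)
The plan is to compute the kernel of $\iota_{\cG}$ directly. By construction, $\iota_{\cG}$ is the homomorphism $\G \to \prod_{\ell \geq 0} G_{\ell}$ whose $\ell$-th coordinate is the quotient map $\G \to G_{\ell} = \G/C_{\ell}$. These coordinate maps are compatible with the bonding maps $p_{\ell+1} \colon G_{\ell+1} \to G_{\ell}$, since $C_{\ell+1} \subset C_{\ell}$. That inclusion holds because $\G_{\ell+1} \subset \G_{\ell}$, so $g\G_{\ell+1}g^{-1} \subset g \G_{\ell} g^{-1}$ for every $g$, and intersecting over $g \in \G$ gives $C_{\ell+1} \subset C_{\ell}$. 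Hence $\iota_{\cG}$ is a well-defined group homomorphism into the inverse limit $\fG_{\cG}$.

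Since $\iota_{\cG}$ is a homomorphism, it is injective exactly when its kernel is trivial. An element $g \in \G$ lies in $\ker \iota_{\cG}$ if and only if its image in every $G_{\ell}$ is the identity coset, that is, $g \in C_{\ell}$ for all $\ell \geq 0$. Thus
\begin{equation*}
\ker \iota_{\cG} = \bigcap_{\ell \geq 0} C_{\ell} = \bigcap_{\ell > 0} C_{\ell},
\end{equation*}
where the last equality uses the nesting of the cores together with $C_0 = \G$. The lemma follows immediately from this identity.

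No step here is a real obstacle. The only point that needs checking is that the cores are nested, which is what makes $\iota_{\cG}$ land in the inverse limit rather than merely in the product. As a dynamical remark, one can also observe that $C_{\ell}$ is the kernel of the action of $\G$ on $X_{\ell} = \G/\G_{\ell}$. Therefore $\bigcap_{\ell} C_{\ell}$ is the kernel of $\Phi$ on $X_{\cG}$, so the lemma is equivalent to saying that $\iota_{\cG}$ is injective if and only if the action is effective.
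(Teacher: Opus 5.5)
Your proof is correct. The paper states this lemma without writing out a proof, and your computation $\ker \iota_{\cG} = \bigcap_{\ell \geq 0} C_{\ell} = \bigcap_{\ell > 0} C_{\ell}$ (with the nesting $C_{\ell+1} \subset C_{\ell}$ ensuring $\iota_{\cG}$ lands in the inverse limit) is exactly the immediate argument it relies on.

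Your closing remark mirrors the paper's follow-up comment that $\iota_{\cG}$ is injective if and only if $\Phi$ is effective. That remark tacitly uses that each projection $X_{\cG} \to X_{\ell}$ is onto, which holds because the bonding maps are surjections of finite sets.
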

The map  $\iota_{\cG}  \colon \G \to \fG_{\cG}$   corresponds to the action map $\Phi \colon \G \to \Homeo(X_{\cG})$, so 
the action  is effective  if  and only if  $ \bigcap_{\ell \geq 0} C_{\ell} = \{e\}$ is the trivial group. In particular, if the action is effective then  $\G$ is residually finite.

Let $\fD_{\cG,x} = \{ \whg \in \fG_{\cG} \mid \oPhi(\whg)(x) = x\}$ be the isotropy subgroup at the basepoint $x \in X_{\cG}$. We omit the subscript $x$ when it   is clear from the context. There is a  $\fG_{\cG}$-equivariant isomorphism $X_{\cG} \cong \fG_{\cG}/\fD_{\cG}$. 

For a subgroup $H \subset \G$,  let $\oH \subset \fG_{\cG}$ denote its closure in $\fG_{\cG}$. In particular, the closure $\oG_{\ell}$ acts transitively on $U_{\ell}$, hence $U_{\ell} = \oG_{\ell}/\fD_{\cG}$ for all $\ell \geq 0$, and so $\bigcap_{\ell} \oG_{\ell} = \fD_{\cG}$.

There is an alternate definition of $\fD_{\cG}$ as an inverse limit. 
Consider the quotient $D_{\ell} = \G_{\ell}/C_{\ell}$. Since $C_{\ell} $ is a normal subgroup in $G$, and hence also normal in $\G_{\ell}$, the quotient $D_{\ell}$ is a subgroup of  $G_{\ell} = \G/C_{\ell}$. For $\ell' > \ell$ the inclusion map  $\G_{\ell'} \to \G_{\ell}$ induces  a homomorphism $\theta^{\ell'}_{\ell} \colon  \G_{\ell'}/C_{\ell'} \to \G_{\ell}/C_{\ell}$.
By  \cite[Theorem~4.4]{DHL2016a} we then have a topological isomorphism
\begin{equation}\label{eq-discriminant}
\fD_{\cG} \cong \lim_{\longleftarrow}\, \left \{\theta^{\ell'}_{\ell} \colon \G_{\ell'}/C_{\ell'} \to \G_{\ell}/C_{\ell} \right\} \ .
\end{equation}

The subgroup $\fD_{\cG}$ is \emph{totally not normal} in $\fG_{\cG}$. That is, the largest   subgroup of  $\fD_{\cG}$ which is normal in $\fG_{\cG}$ is the trivial group by \cite[Corollary~5.4]{DHL2016a}.  Consequently, the      action ${\rm Adj} \colon  \fD_{\cG} \to \Homeo(X_{\cG};  x)$ induced from the adjoint action of  $\fD_{\cG}$ on $\fG_{\cG}$ is an injection.

  We next define some  notions of equivalence for odometer actions.  
    \begin{defn} \label{def-isomorphism}
Odometer actions $(\fX_1, \G_1, \Phi_1)$ and $(\fX_2, \G_2, \Phi_2)$ are said to be \emph{isomorphic}  if there is a homeomorphism $h \colon \fX_1 \to \fX_2$ and group isomorphism $\Theta \colon \G_1 \to \G_2$ such that 
\begin{equation}\label{eq-isomorphism}
\Phi_1(g) = h^{-1} \circ \Phi_2(\Theta(g)) \circ h   \in   \Homeo(\fX_1) \   \textrm{for  all} \ g \in \G_1 \ .
\end{equation}
The map  $h$ is said to be an \emph{isomorphism} when    $\fX_1 = \fX_2$ and $\G_1 = \G_2$. In this case,   $\Theta \in \Aut(\G)$ can be viewed as a ``uniform time change'' for the action.

The isomorphism $h$ is an \emph{automorphism}   if $\fX_1 = \fX_2$, $\G_1 = \G_2$, and $\Theta \colon \G \to \G$ is the identity.
 \end{defn}    
 
When $\fX_1 = \fX_2$, we can then define   the following group invariants of the action.
\begin{defn}\label{def-normalizer}
Let $(\fX, \G, \Phi)$ be an odometer action. Define the isomorphism  group  
\begin{equation}\label{eq-normalizer}
\fI(\fX, \G, \Phi) = \{ h \in \Homeo(\fX)  \mid h ~ {\rm is \ an \ isomorphism \ of \ the \ action}\} \ .
\end{equation}
  The  automorphisms   form a normal subgroup
\begin{equation}\label{eq-auto}
\fI_0(\fX, \G, \Phi) = \{ h  \in \Homeo(\fX)  \mid h ~ {\rm is \ an \ automorphism \ of \ the \ action}\} \ .
\end{equation}
The  quotient group 
\begin{equation}\label{eq-rep}
\fQ(\fX, \G, \Phi) = \fI(\fX, \G, \Phi)/\fI_0(\fX, \G, \Phi) \  
\end{equation}
then has a representation $\Theta \colon \fQ(\fX, \G, \Phi) \to \Aut(\G)$.
\end{defn}

For the case when $\G = \mZ^n$, the map $\Theta$ has values in ${\rm GL}(n,\mZ)$, and  the image of $\Theta$ is called the ``linear representation group'' by Cabezas and Petite   \cite{CabezasPetite2024}.  As they remark, the calculation of the range of $\Theta$ is  a difficult open problem already for the case $\G = \mZ^n$ for $n > 1$, except in special cases.  We extend the concept of a linear representation group to  the group of germinal equivalences. 

First, we recall a basic notion.

\begin{defn} \label{def-RE}
Odometers  $\Phi \colon \G \times \fX \to \fX$ and  $\Phi' \colon \G' \times \fX' \to \fX'$ are \emph{return equivalent} if there exist adapted sets $U \subset \fX$ and $U' \subset \fX'$ and a homeomorphism $h_U \colon U \to U'$ that induces an isomorphism $\Theta_U \colon \cH_U \to \cH_V$ between the groups
 \begin{eqnarray*}
\cH_U & = & {\rm Image}\{\Phi_U \colon \G_U \to  \Homeo(U)\} \\
\cH'_{U'} & = &  {\rm Image}\{\Phi'_{U'} \colon \G'_{U'} \to  \Homeo(U')\}.
\end{eqnarray*}
\end{defn}
The notion of \emph{return equivalence} of odometers  corresponds to the notion of \emph{Morita equivalence}.

Note that the   map $\Phi_U \colon \G_U \to \cH_U$ may have kernel, and likewise for $\Phi'_{U'}$. 
 Thus,  a homeomorphism $h_{U} \colon U \to U'$ which induces an isomorphism between   $\cH_U$ with $\cH'_{U'}$ need not induce an isomorphism between  $\G_U$ and $\G'_{U'}$.

The group of germinal equivalences is defined using the germs of return equivalence maps.

    \begin{defn}\label{def-dyncomm2}
   Let  $(\fX, \G, \Phi)$   be an odometer action, and $x \in \fX$. 
   \begin{enumerate}
\item  A \emph{dynamical commensurator} at $x$  is a homeomorphism $h_U \colon U \to V$, where $U$ and $V$ are adapted subsets with $x \in U \cap V$, such that $h_U(x) = x$, and $h_U$ induces an isomorphism $\Theta_U \colon \cH_U \to \cH_V$.
\item   Commensurators $h_U \colon U \to V$ and $h_{U'} \colon U' \to V'$ are equivalent if there exists an adapted set $U''$ with $x \in U'' \subset U \cap U'$ such that $h_U|U'' = h'_{U'}|U''$. We   write $(h_U,U,V) \comm (h'_{U'},U',V')$.
\item   The \emph{dynamical commensurator group} of $(\fX, \G, \Phi)$ at $x$ is the set of germs of dynamical commensurators,
$$
\Comm(\fX, \G, \Phi, x) = \{  h_U \colon U \to V  \mid x \in U \cap V ,  \ h_U \  {\rm is \ a \ commensurator}  \} \slash  \comm  $$
The equivalence class of $h_U \colon U \to V$ is denoted by $[h_U,U,V]$.
\end{enumerate}
\end{defn}

The   dynamical commensurator group is the germinal analog of the group $\fR(\fX, \G, \Phi)$ defined in \eqref{eq-rep}. The definition of the representation $\Theta$ above does not directly extend, as the domain is the group $\G_U$ which is only well-defined modulo the choice of the adapted neighborhood $x \in U$. The content of Theorem \ref{thm-injective} can be considered as making a well-defined germinal analog $\Theta_{\cG}$  for $\Theta$.

  The  motivation for  Definition~\ref{def-dyncomm2}  comes from the following result and its corollary:
\begin{thm}\cite{CHL2019a} \label{thm-inducedmap}
 Suppose that $\fM_{\cP}$ and $\fM'_{\cP'}$ are  homeomorphic solenoidal manifolds with associated group chains $\cG$ and $\cG'$, and corresponding presentations $\cP$ and $\cP'$.   Then their monodromy odometers  $\Phi \colon \G \times X_{\cG} \to X_{\cG}$ and $\Phi' \colon \G' \times X'_{\cG'} \to X'_{\cG'}$ are \emph{return equivalent}. 
\end{thm}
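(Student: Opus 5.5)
The plan is to turn a homeomorphism $h \colon \fM_{\cP} \to \fM'_{\cP'}$ into a homeomorphism between clopen subsets of the fibers $X_{\cG}$ and $X'_{\cG'}$, and then to read off the conjugacy of the restricted holonomy groups from the foliated structure.

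\textbf{Step 1: reduce to a shuffle map.} Apply the Rogers--Tollefson result, Theorem~\ref{thm-RT}, in its version for maps between two solenoids; the proof in \cite{RT1972b} works equally well for two presentations. This replaces $h$, up to an $\e$-homotopy, by a map $\whvp$ induced by a zig-zag of coverings $\varphi_{i_\ell} \colon M_{i_\ell} \to M'_\ell$. Each of these is $\pi_1$-injective with finite-index image.

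\textbf{Step 2: work on the level of leaves.} The homeomorphism $h$ maps leaves (path components) to leaves, so it is foliated. Choose a foliation chart of $\fM'_{\cP'}$ around $h(\whx)$ whose transversal is a clopen set $W' \subset \fX'_0$. Pulling back by $h$ gives a foliated open set around $\whx$. Then take a small adapted set $U \subset \fX_0$ containing $x$ such that $h$ maps a thin neighbourhood $U \times D$ into this chart, where $D$ is a small disk. Sliding along plaques defines a homeomorphism $h_U$ from $U$ onto a clopen $V_0 \subset \fX'_0$. The holonomy pseudogroup of $\F_{\cP}$ restricted to $U$ is generated by the $\G_U$-action, because $U$ is adapted and the monodromy of $\whq_0$ is $\Phi$. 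Since $h$ conjugates holonomy pseudogroups, $h_U$ carries the pseudogroup restricted to $U$ onto the one restricted to $V_0$.

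\textbf{Step 3: pass from pseudogroups to groups.} This is the main obstacle. We need to show that $V_0$ can be taken adapted for $\Phi'$, and that the conjugated pseudogroup elements are restrictions of global elements of $\G'_{V_0}$. This is where the odometer structure is used.

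First shrink $V_0$ to an adapted set $V \subset V_0$, which is possible because adapted sets form a neighbourhood basis. Then replace $U$ by $h_U^{-1}(V)$ and check that this set is still adapted. For that I would use equicontinuity. Every holonomy germ of an odometer is realised by a global group element, because the action is by isometries in the chain model. Hence a pseudogroup element mapping $h_U^{-1}(V)$ to meet itself is the restriction of some $\gamma \in \G$. Its $h_U$-conjugate is a holonomy map on $V$, which agrees with some $\gamma' \in \G'$ near a point. By equicontinuity together with adaptedness of $V$, $\gamma'$ maps $V$ onto $V$. Consequently $\gamma$ maps $h_U^{-1}(V)$ onto itself.

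Alternatively, the shuffle maps give this directly. The coverings $\varphi_{i_\ell}$ identify fibers of $M_{i_\ell}$ with finite unions of fibers of $M'_\ell$. The cylinder sets $U_{i_\ell}$ and $U'_\ell$ then correspond, and $(\varphi_{i_\ell})_\#$ intertwines $\G_{i_\ell}$ with a finite-index subgroup of $\G'_\ell$.

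\textbf{Step 4: obtain the isomorphism $\Theta_U$.} Define $\Theta_U(\Phi_U(\gamma)) = h_U \circ \Phi_U(\gamma) \circ h_U^{-1}$. This lies in $\cH'_V$ by Step 3. It is a homomorphism and it is injective since it is a conjugation. Applying the same construction to $h^{-1}$ gives surjectivity. Hence $h_U$ is a return equivalence, as required.
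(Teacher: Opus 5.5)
\textbf{Verdict: genuine gap in Step 3.} Your overall route matches what the paper has in mind: leaves go to leaves, sliding along plaques gives a transversal map $h_U$ that conjugates the holonomy pseudogroups, and one then passes to adapted sets. The paper itself only cites \cite{CHL2019a} and points to holonomy along leafwise paths. Step 2 is fine.

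The problem is the sentence ``Consequently $\gamma$ maps $h_U^{-1}(V)$ onto itself''. You only know that $g=h_U\circ\Phi(\gamma)\circ h_U^{-1}$ agrees with some $\Phi'(\gamma')$ near one point. The fact that $\gamma' V=V$ tells you nothing about $g(V)$ unless $g=\Phi'(\gamma')$ on all of $V$, and nothing you use forces that. Your argument uses only two facts: $h_U$ conjugates the restricted pseudogroups, and $U,V$ are adapted. These are not enough, as the following example shows.

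Let $\mZ$ act on $\widehat{\mZ}_2$ by $\tau(y)=y+1$. Let $\sigma$ be the involution equal to $y\mapsto y+1$ on $4\widehat{\mZ}_2$, to $y\mapsto y-1$ on $1+4\widehat{\mZ}_2$, and to the identity elsewhere. Being a piecewise translation, $\sigma$ conjugates the pseudogroup to itself. So $U=V_0=\widehat{\mZ}_2$ with $h_U=\sigma$ satisfies every hypothesis you use.
\begin{itemize}
\item The set $V=2\widehat{\mZ}_2$ is adapted, but $h_U^{-1}(V)=(1+4\widehat{\mZ}_2)\cup(2+4\widehat{\mZ}_2)$ is not.
\item The map $g=\sigma\circ\tau\circ\sigma^{-1}$ equals $y\mapsto y+2$ on $4\widehat{\mZ}_2$, and $y\mapsto y+2$ preserves $V$.
\item Nevertheless $g(V)=(2+4\widehat{\mZ}_2)\cup(3+4\widehat{\mZ}_2)\neq V$.
\end{itemize}
The same local-versus-global issue undermines ``this lies in $\cH'_V$'' in Step 4. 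A homeomorphism of $V$ that is locally in the pseudogroup need not equal a single $\Phi'(\gamma')|_V$.

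\textbf{What is missing.} You need to choose $V$ small relative to a finite generating set; this is where finite generation and equicontinuity actually enter.
\begin{enumerate}
\item Take a finite symmetric generating set $\gamma_1,\ldots,\gamma_k$ of $\G_U$. By compactness, each $h_U\circ\Phi(\gamma_i)\circ h_U^{-1}$ coincides with a single $\Phi'(\gamma'_{ij})$ on each piece of a finite clopen partition of $V_0$.
\item By equicontinuity, choose the adapted set $V$ so small that all its $\Phi'$-translates have diameter below a Lebesgue number for these partitions.
\item Induction on word length then shows that every element of $h_U\cH_Uh_U^{-1}$ carries $V$ onto a $\Phi'$-translate of $V$ by a single $\Phi'(\gamma')$.
\item Combined with adaptedness of $V$, this gives that $h_U^{-1}(V)$ is adapted and that conjugation maps $\cH_{h_U^{-1}(V)}$ into $\cH'_V$.
\item For surjectivity, run the symmetric argument inside $V$, using generators of $\G'_V$ against the conjugated group, and shrink once more to an adapted $W\subset V$. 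The forward inclusion survives this shrinking, so $h_U\colon h_U^{-1}(W)\to W$ is the required return equivalence.
\end{enumerate}
Simply ``applying the same construction to $h^{-1}$'' gives an inclusion for a different pair of sets, so it does not yield surjectivity for the pair you already have.

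\textbf{Steps 1 and the shuffle alternative.} Step 1 is never actually used. The shuffle-map alternative does not work as stated. The maps $\varphi_{i_\ell}$ are only $\pi_1$-injective maps that approximate $h$ up to homotopy; they are not coverings, so they do not identify cylinder sets homeomorphically.
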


   For a homeomorphism $h \in \Homeo(\fM_{\cP};  \whx)$ the return equivalence induced by $h$ is defined using the holonomy along leafwise paths that start and return in a foliation chart at $\whx$. It follows that the germ of the return equivalence is unchanged by a leafwise homotopy which fixes $\whx$. We thus obtain:
 
 \begin{cor}\label{cor-chi}
 Let $\fM_{\cP}$ be a solenoidal manifold, with monodromy action $(X_{\cG}, \G, \Phi)$.   For $\whx \in \fM_{\cP}$, there is a well-defined map $\sigma_{\cP,  \whx} \colon \MCG(\fM_{\cP} ; \whx) \to \Comm(X_{\cG}, \G, \Phi, x)$.
\end{cor}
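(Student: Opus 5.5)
To construct $\sigma_{\cP,\whx}$ I would work in a foliation chart of $\FP$ at $\whx$ and produce a germ of a return equivalence. Choose a transversal $\fX_0 = \whq_0^{-1}(x_0)$ through $\whx$, identified with $X_{\cG}$ so that $\whx$ corresponds to $x$. Choose a product neighbourhood $W \cong D \times \fX_0$, where $D$ is a small convex disk about $x_0$ and $\whq_0$ is the projection to $D$.

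Given $h \in \Homeo(\fM_{\cP};\whx)$, the image $h(W)$ is a neighbourhood of $\whx$. Let $\whq_D$ denote the local product projection of the chart onto $\fX_0$. By continuity there is an adapted clopen set $U \subset \fX_0$ containing $x$, for instance some $U_{\ell}$, with $h(U) \subset W$. Define $h_U = \whq_D \circ h|_U$, which sends $U$ into $\fX_0$ and fixes $x$.

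\emph{Step 1.} Show that after shrinking, $h_U$ is a homeomorphism onto an adapted set $V$. Since $h$ maps leaves to leaves and plaques to plaques near $\whx$, $h_U$ is the holonomy-transported image of a transversal, so it is injective and open. Then shrink $U$ to some $U_\ell$ so that $V$ is clopen. The image need not be adapted automatically. I would use that adapted sets form a neighbourhood basis of $x$: pick an adapted $V' \subset h_U(U)$ and replace $U$ by $h_U^{-1}(V')$. This new $U$ must itself be adapted, which I expect to follow from Step 2 by a symmetric argument.

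\emph{Step 2.} Show that $h_U$ intertwines $\cH_U$ with $\cH_V$. An element $\gamma \in \G_U$ acts on $U$ as the holonomy along a leafwise loop $\tau$ in $\fM_{\cP}$ based near $\whx$ that returns to $W$, projecting to a loop in $M_0$ representing $\gamma$. Its image $h\circ\tau$ is again a leafwise path from $h_U(y)$ back to the chart. Because $h$ is a foliated homeomorphism, $h$ conjugates the holonomy along $\tau$ to the holonomy along $h\circ\tau$. That holonomy equals the action of some $\gamma' \in \G$ mapping $V$ to itself, so $\gamma' \in \G_V$.

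Well-definedness on the image groups $\cH_U$ uses only that holonomy depends on the homotopy class of the path rel endpoints in the plaque chain. It follows that $\Phi_U(\gamma) \mapsto \Phi_V(\gamma')$ is a well-defined homomorphism $\Theta_U$. Applying the same construction to $h^{-1}$ gives the inverse, so $\Theta_U$ is an isomorphism. This same symmetry shows that $U$ is adapted exactly when $V$ is. This is the step I expect to be the main obstacle: making the holonomy correspondence precise for non-regular coverings, where $\G_U$ acts with kernel and $\gamma'$ is determined only modulo that kernel.

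\emph{Step 3.} Establish independence of choices and isotopy invariance. Different choices of $U$ give restrictions of the same map, hence the same germ $[h_U,U,V]$.

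For an isotopy $h_t$ in $\Homeo(\fM_{\cP};\whx)$, the family $h_t(U)$ stays in $W$ for small time steps. Since $\fX_0$ is totally disconnected and each $h_t$ fixes $\whx$, the projections $\whq_D\circ h_t|_{U''}$ are locally constant in $t$ on a smaller adapted $U''$. Here I would use compactness of $[0,1]$ and subdivide as in the proof of Theorem~\ref{thm-chi}. Hence the germ depends only on $[h]$. Composition $h \circ k$ gives the composite germ, so $\sigma_{\cP,\whx}$ is well defined on $\MCG(\fM_{\cP};\whx)$.
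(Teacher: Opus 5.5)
Your plaque-projection construction of $h_U$ and your Step~3 are sound. They match the paper's remark that the germ is unchanged by leafwise homotopies fixing $\whx$.

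The genuine gap is in Step~2. Conjugating the holonomy along a \emph{single} leafwise path $\tau$ from $y\in U$ by $h$ only gives an equality of germs at $h_U(y)$. Near $h_U(y)$, the map $h_U\circ\Phi(\gamma)\circ h_U^{-1}$ agrees with $\Phi(\gamma'_y)$, where $\gamma'_y$ is the class of the loop $\whq_0\circ(h\circ\tau)$ closed up by plaque paths. As $y$ ranges over $U$, the class $\gamma'_y$ is only locally constant. Since $U$ is a Cantor set, this yields a finite clopen partition of $V$ on whose pieces the conjugate is given by possibly different elements of $\G$. That shows $h_U$ conjugates the restricted holonomy \emph{pseudogroups}. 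It does not show $h_U\,\Phi_U(\gamma)\,h_U^{-1}\in\cH_V=\Phi_V(\G_V)$, which is what Definition~\ref{def-dyncomm2} requires.

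The kernel issue you flag is harmless: define $\Theta_U$ as conjugation by $h_U$ and it disappears. The real issue is that $\Phi_V(\gamma'_y)$ must be independent of $y$. This is a global property of $h$, and no local holonomy argument supplies it.

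The same defect affects Step~1. Step~2 already presupposes that $V$ is adapted, both to define $\G_V$ and to conclude $\gamma'V=V$. The ``symmetric'' argument with $h^{-1}$ meets the identical uniformity problem. So adaptedness of $h_U(U)$ is not established either.

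The missing ingredient is the global input the paper imports through Theorem~\ref{thm-inducedmap} from \cite{CHL2019a}; compare also the proof of Theorem~\ref{thm-gottalift}. By Theorem~\ref{thm-RT}, $h$ is $\e$-homotopic to a map $\whvp$ induced by a shuffle $\{\vp_{i_\ell}\}$. For small $\e$, the traces of this homotopy are short leafwise paths, so they lie in plaques. Hence $h$ and $\whvp$ have the same plaque projection on the fiber $\whq_{i_0}^{-1}(x_{i_0})$, which is the adapted set $U_{i_0}$.

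For $\whvp$, uniformity is automatic. Since $\whq_0\circ\whvp=\vp_{i_0}\circ\whq_{i_0}$, every lift through $\whq_{i_0}$ of a loop $\tau$ at $x_{i_0}$ starting in $U_{i_0}$ is carried to a lift of the \emph{single} loop $\vp_{i_0}\circ\tau$. So the conjugate of $\Phi_U([\tau])$ is the action of the one element $\iota_{y',y}\circ(\vp_{i_0})_{\#}[\tau]$ on the whole image. Thus $\Theta_U$ is induced by a group homomorphism. Adaptedness of the image must likewise come from the argument of \cite{CHL2019a}, rather than from your symmetry argument; compare the appeal to its Lemma~3.4 in the proof of Theorem~\ref{thm-surjective}.
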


Theorem \ref{thm-inducedmap} and Corollary \ref{cor-chi} together show the statement of Theorem \ref{thm-charmap}.

\section{Coherent odometer actions} \label{sec-dynamics}

In this section, we  develop hypotheses on a group chain $\cG$ that are sufficient to prove the existence  of the  map $\Theta_\cG: \Comm(X_{\cG}, \G, \Phi, x) \to \Comm(\G)$ in Diagram \eqref{eq-diagram}, as will be shown in    Section~\ref{sec-lifting}.
  We also give conditions on a group chain which are used in Section~\ref{sec-classify}  to construct homeomorphisms corresponding to commensurators.

An action  $(\fX,\G,\Phi)$ is   said to be  \emph{free} if for all $x \in \fX$ and $\gamma \in \G$,   $\gamma \cdot x = x$ implies that $\gamma = e$,   the identity of the group. 
Let $\Fix(\gamma) = \{x \in \fX \mid \gamma \cdot y = y \}$, and define the \emph{isotropy set}
\begin{equation}\label{eq-isotropy}
 \Iso(\Phi) = \{ y \in \fX \mid \exists ~ \gamma \in \G ~ , ~ \gamma \ne e ~, ~ \gamma \cdot y = y   \} = \bigcup_{e \ne \gamma \in \G} \ \Fix(\gamma) \ . 
\end{equation}
 Recall that a set $X$ is \emph{meager}  if it is a countable union of subsets whose closures have empty interior. A set is \emph{nowhere dense} if its closure has empty interior, so a meager set is equivalently a set which is a countable union of nowhere dense sets. The Cantor space $\fX$ is a Baire space, so   a meager set has empty interior.
   
  \begin{defn} \cite{Boyle1983,BoyleTomiyama1998} \label{def-topfree}
  $(\fX,\G,\Phi)$ is said to be \emph{topologically free}  if  $\Iso(\Phi) $ is meager in $\fX$. 
 \end{defn}
 
Note that if   there exists a non-identity element $\gamma \in \G$ such that $\Fix(\gamma)$ has interior, then the action is not topologically free. 
A topologically free action is always effective, but an effective action may or may not be topologically free. Examples of effective actions which are not topologically free are abundant, including, for instance, the actions of weakly branch groups \cite{Nekrashevych2005}. 
 
 Now suppose that $(X_{\cG}, \G , \Phi)$ is an odometer obtained from the group chain $\cG = \{\G_{\ell} \mid \ell \geq 0\}$. The clopen sets  $\{U_{\ell} \mid \ell \geq 0\}$   as defined by \eqref{def-eql} are adapted with $\G_{U_{\ell}} = \G_{\ell}$, 
 and  $x = \bigcap_{\ell > 0} U_{\ell} \in X_{\cG}$ is the base point.  Set
  \begin{equation}\label{eq-ker}
\ker (\cG) = \bigcap_{\ell > 0} \ \G_{\ell} \ . 
\end{equation}
  The following criteria for topological freeness will be used in Section~\ref{sec-classify}.
\begin{lemma}\label{lem-topfree}
If $\ker (\cG) = \{e\}$ then the action $(X_\cG,\G,\Phi)$ is topologically free.
\end{lemma}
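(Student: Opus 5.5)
Since the isotropy set $\Iso(\Phi)$ is the countable union $\bigcup_{e \ne \gamma \in \G} \Fix(\gamma)$ ($\G$ is finitely generated, hence countable), and each $\Fix(\gamma)$ is closed (the action is by homeomorphisms of a Hausdorff space), it suffices to show that for each $\gamma \ne e$ the closed set $\Fix(\gamma)$ has empty interior. Then $\Iso(\Phi)$ is meager and the action is topologically free in the sense of Definition~\ref{def-topfree}.

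So suppose, for contradiction, that $\Fix(\gamma)$ contains a nonempty open set for some $\gamma \neq e$. The $\G$-translates of the basic neighborhoods $U_\ell$ form a basis of the topology of $X_{\cG}$, so there are $g \in \G$ and $\ell \geq 0$ with $g \cdot U_\ell \subset \Fix(\gamma)$. Conjugating, $\gamma' = g^{-1}\gamma g$ fixes every point of $U_\ell$, and $\gamma' \ne e$. In particular $\gamma'$ fixes the basepoint $x$, so $\gamma' \cdot U_{\ell'} \cap U_{\ell'} \neq \emptyset$ for all $\ell'$. Since each $U_{\ell'}$ is adapted with $\G_{U_{\ell'}} = \G_{\ell'}$, we get $\gamma' \in \G_{\ell'}$ for every $\ell'$, i.e.\ $\gamma' \in \ker(\cG)$. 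Equivalently, $\gamma'$ fixes $x=(e\G_0, e\G_1, \ldots)$ exactly when $\gamma' \G_{\ell'} = \G_{\ell'}$ for all $\ell'$. By hypothesis $\ker(\cG) = \{e\}$, so $\gamma' = e$ and hence $\gamma = e$, a contradiction.

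This argument uses only that $\gamma'$ fixes the single point $x$, so it proves more: the stabilizer of $x$ is exactly $\ker(\cG)$. Stabilizers along the orbit are conjugates $g\,\ker(\cG)\, g^{-1}$, so when $\ker(\cG)$ is trivial the orbit $\G \cdot x$, which is dense by minimality, lies in the complement of $\Iso(\Phi)$. This density is what shows each $\Fix(\gamma)$ has empty interior: any nonempty open set meets $\G\cdot x$, at a point with trivial stabilizer. The only step needing care is the identification of the stabilizer of $x$ with $\ker(\cG)$ via the coset description of $X_{\cG}$, and that is immediate from the definitions. I expect no real obstacle.
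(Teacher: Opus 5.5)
Your proof is correct and follows essentially the paper's argument: both conjugate a hypothetical open subset of $\Fix(\gamma)$ back to the basepoint $x$ and use that the stabilizer of $x$ is exactly $\ker(\cG)$. The only difference is cosmetic. The paper uses minimality to find a point $\gamma\cdot x$ of the dense orbit inside the open set, as in your last paragraph, whereas your main line takes a basic clopen set $g\cdot U_\ell$ contained in it.
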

\proof
Note that if $\gamma \cdot x = \gamma' \cdot x$ then $\gamma^{-1} \gamma' \in \G_{\ell}$ for all $\ell > 0$ hence $\gamma^{-1} \gamma'  \in \ker(\cG)$ so $\gamma = \gamma'$. Also note that by continuity of the action $\Fix(\gamma)$ has empty interior if and only if the closure of $\Fix(\gamma)$ has empty interior.

Now let $\gamma' \in \G$ such that  that there exists a non-empty open set  $V \subset \Fix(\gamma')$. Then there exists $\gamma \in \G$ with $\gamma  \cdot x \in V$. Then $\gamma' \cdot \gamma \cdot x = \gamma  \cdot x$, and so $\gamma^{-1} \gamma' \gamma = e$ hence $\gamma' = e$. It follows that each $\Fix(\gamma)$ has no interior for $\gamma \ne e$, hence $ \Iso(\Phi)$ is meager.
\endproof

We introduce another property of odometer actions, that is weaker than the topological free property.
\begin{defn}\label{def-coherent}
   An odometer  $(\fX, \G, \Phi)$  is \emph{coherent} if for every adapted set $U \subset \fX$, the restricted holonomy action map $\Phi_U \colon \G_U \to Homeo(U)$ has finite kernel.
\end{defn}

 \begin{lemma}\label{lem-coherentqa}
A topologically free  odometer  $(\fX, \G, \Phi)$ is coherent.
Conversely, if $\G$ is torsion-free and the action is  coherent, then the action is topologically free.
\end{lemma}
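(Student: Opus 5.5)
The plan is to reduce both directions to one fact about equicontinuous actions. For $\gamma \in \G_U$, the subset of $U$ where $\gamma$ acts trivially is controlled by the kernel $K_U = \ker \Phi_U$. Note that $K_U$ is a normal subgroup of $\G_U$. For $\gamma \in K_U$ we have $U \subset \Fix(\gamma)$, and $U$ is a nonempty clopen set, so $\Fix(\gamma)$ has nonempty interior.

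For the first statement, suppose the action is topologically free and let $U$ be adapted. I will show $K_U = \{e\}$, which is certainly finite. If $e \ne \gamma \in K_U$, then $\Fix(\gamma) \supset U$ has nonempty interior. But $\Iso(\Phi) \supset \Fix(\gamma)$ is meager, and in the Baire space $\fX$ a meager set has empty interior. This contradiction shows $K_U$ is trivial, so the action is coherent. This direction is immediate.

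For the converse, assume $\G$ is torsion-free and the action is coherent. Then each $K_U$ is a finite subgroup of a torsion-free group, hence $K_U = \{e\}$ for every adapted $U$. We must show each $\Fix(\gamma)$, $\gamma \ne e$, is nowhere dense. Since $\Fix(\gamma)$ is closed, it suffices to show it has empty interior; then $\Iso(\Phi)$, a countable union of these sets ($\G$ being finitely generated, hence countable), is meager.

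So suppose $W \subset \Fix(\gamma)$ is a nonempty open set. The key step is to find an adapted set $U \subset W$. Take any $y \in W$. I would use that the odometer admits arbitrarily small adapted clopen neighborhoods of every point. In the group chain model this holds because the translates $g\cdot U_\ell$ form a clopen basis and are adapted; in general it follows from equicontinuity and minimality, as recalled before Definition~\ref{def-dyncomm-point}. Pick an adapted $U$ with $y \in U \subset W$. Then $\gamma$ fixes $U$ pointwise, so $\gamma \cdot U = U$, giving $\gamma \in \G_U$, and moreover $\Phi_U(\gamma) = \mathrm{id}$. Hence $\gamma \in K_U = \{e\}$, a contradiction.

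The only point needing care is the existence of small adapted neighborhoods around an arbitrary point $y$, not just the basepoint. For a general odometer I would realize it, up to conjugacy, as a group chain model $X_{\cG}$ with basepoint moved to $y$; by minimality $y = g \cdot x$ for some $g$ in the profinite completion. The sets $g \cdot U_\ell$, or equivalently the adapted sets for the conjugate chain, then give the required neighborhoods. I expect this to be a routine citation rather than a real obstacle.
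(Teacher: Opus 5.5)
Your proof is correct and follows essentially the same route as the paper. For the forward direction, $\ker \Phi_U$ is trivial because meagerness of $\Iso(\Phi)$ rules out a nontrivial element fixing the open set $U$; you justify this with the Baire property, which the paper simply asserts. For the converse, you choose an adapted set inside an open subset of $\Fix(\gamma)$, so $\gamma$ lies in the finite, hence torsion, kernel of $\Phi_U$ and must be trivial; your extra care about small adapted sets near an arbitrary point is exactly what the paper uses implicitly.
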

\proof
If $(\fX, \G, \Phi)$ is topologically free, then for every adapted set $U \subset \fX$, the restricted holonomy action map $\Phi_U \colon \G_U \to Homeo(U)$ has trivial kernel. Conversely, assume that the action   is  coherent, and let $\gamma \in \G$ be such that there exists a non-empty open set $V \subset \Fix(\gamma)$. Choose an adapted set $U \subset V$, then $\gamma \in \G_U$ and  $\Phi_U(\gamma) = Id$. By assumption, the kernel of $\Phi_U$ is a torsion subgroup of $\G_U$ hence a torsion subgroup of $\G$. Thus, $\gamma$ is the identity, and so $\Iso(\Phi)$ is a meager set. 
\endproof

We illustrate these concepts with    examples.

\begin{ex}\label{ex-product-lqa-coherent}
{\rm The first example   is that of an action which is effective and coherent, but not topologically free.  It is   a special case of \cite[Example~7.6]{DHL2016a}.

Let $(\fX_0, \G_0, \Phi_0)$ be a free odometer. For example, this can be a standard odometer action of $\G_0 = \mZ$ on the profinite completion $\widehat{\mZ}$.
Let $H$ be a non-trivial finite simple group, and $K \subset H$ a proper non-trivial subgroup.
Let $\G = \G_0 \times H$ be the product group, and let $\fX =  \fX_0 \times H/K$ be the product of $\fX_0$ with the finite space $H/K$. Let $\Phi \colon \G \times \fX \to \fX$ be the product of the   action of   $\G_0$ on the factor $\fX_0$, an the left action of  $H$ on the factor $H/K$.

For each $hK \in H/K$, let $U_h = \fX_0 \times \{hK\}$ which is an adapted clopen set in $\fX$. The subgroup $\G_0$ acts freely on each $U_h$. The subgroup $\{e\} \times K$ fixes every point in the set $U_e = \fX_0 \times \{e K \} $, and for any $h \in H$, the conjugate $\{e\} \times hKh^{-1}$ fixes  every point in $U_h$, which implies that the action of $\G$ on $\fX$ is not topologically free. The intersection $\bigcap_{h \in H} hKh^{-1}$ is trivial, thus the subgroup $H$ acts effectively on $H/K$ and hence $\G$ acts effectively on $\fX$. 
The action of  $\G_{U_h} = \G_0 \times hKh^{-1}$ preserves $U_h$ and the kernel of the action map $\Phi_{U_h} \colon \G_{U_h} \to \Homeo(U_h)$ is the finite group $\{e\} \times hKh^{-1}$. It follows that this action is effective and coherent.
}
\end{ex}

\begin{ex}\label{ex-weakly-branch}
{\rm
For the second example,  suppose that $\G$ is a weakly branch group (see for example \cite{LavrenyukNekrashevych2002,Nekrashevych2005}).
Let  $\fX$ be the boundary of the tree $T$ on which the group acts, then the   action of $\G$ on $\fX$ is  an effective odometer. Weakly branch groups are defined as subgroups of ${\Aut}(T)$, and so their action is effective. A property of weakly branch actions is that for any adapted set $U \subset \fX$ there is an element $g_U \in \G$ such that $g_U|(\fX \backslash U)$ is the identity. It follows that for any adapted $U$ there is an infinite number of elements in $\G$ which are trivial on $U$, and so the action of $\G$ on $\fX$ is not coherent. 
}
\end{ex}

Example \ref{ex-product-lqa-coherent} illustrates why the terminology ``coherent'' is used. For a coherent action $\Phi$, the restriction of the action $\Phi$ to  a clopen set   $U \subset \fX$ is linked, up to finite ambiguity, to the action on the complementary clopen set $\fX \setminus U$. This is contrasted with Example~\ref{ex-weakly-branch}, where the actions on different branches of the trees need not be linked in any way.
 In contrast to the examples of weakly branch groups and their actions, we have:

  \begin{prop}   \label{prop-nilpotentcoherent}
 If $\G$   is  nilpotent, then  an  effective odometer       $(\fX, \G, \Phi)$     is   topologically free.  
 \end{prop}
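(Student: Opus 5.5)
We will prove the contrapositive: if an odometer $(\fX,\G,\Phi)$ with $\G$ nilpotent is not topologically free, then it is not effective. Since $\G$ is finitely generated and the action is minimal, we may assume $\fX = X_{\cG}$ for a group chain $\cG$, with basepoint $x$ and adapted neighborhoods $U_\ell$. Suppose $\Iso(\Phi)$ is not meager. Then $\G$ is countable and $\Iso(\Phi) = \bigcup_{\gamma \ne e}\Fix(\gamma)$, where each $\Fix(\gamma)$ is closed. By the Baire property some $\Fix(\gamma)$ with $\gamma \neq e$ has nonempty interior. By minimality and the fact that $\Fix(g\gamma g^{-1}) = g\cdot\Fix(\gamma)$, we may conjugate $\gamma$ and assume the interior contains the basepoint. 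So there is an $\ell$ with $\gamma$ acting trivially on the adapted set $U_\ell$, and $\gamma \in \G_\ell$.

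The key step is to spread this local triviality to all of $\fX$ using nilpotency. Let $N = \{g \in \G \mid g \text{ acts trivially on some open neighborhood of } x\}$. This is a nontrivial subgroup, normalized by $\G_x$, the stabilizer of $x$. Since the action is equicontinuous, the sets $\G\cdot U_\ell$ give the finitely many translates $g_1 U_\ell, \ldots, g_k U_\ell$ partitioning $\fX$, with coset representatives of $\G_\ell$. Then $\gamma$ is trivial on all of $\fX$ iff each conjugate $g_i^{-1}\gamma g_i$ is trivial on $U_\ell$.

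I will use the following fact about finitely generated nilpotent groups: every subgroup is subnormal, and the finite-index subgroup $\G_\ell$ contains a finite-index normal subgroup of $\G$. More usefully, I plan to work in the \emph{upper central series}. First replace $\gamma$ by a nontrivial element of $N \cap Z_1(\G_\ell)$-like position, obtained by taking iterated commutators $[\gamma, h_1], [[\gamma,h_1],h_2], \ldots$ with $h_j \in \G_\ell$. Each such commutator is still trivial on $U_\ell$, because $U_\ell$ is $\G_\ell$-invariant and $\gamma|U_\ell = \mathrm{id}$, so $h^{-1}\gamma^{-1}h\gamma$ is trivial there. Since $\G_\ell$ is nilpotent, we can stop at the last nontrivial commutator and obtain $\gamma' \neq e$ that is trivial on $U_\ell$ and central in $\G_\ell$. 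Next, centrality must be pushed to the whole group. For this, let $c$ be the nilpotency class of $\G$. I will take $\gamma'' $ a nontrivial element of $N' = \{g : g|U_\ell = \mathrm{id}\} \cap \G_\ell$ lying in the deepest possible term of the lower central series of $\G$ intersected with $N'$. Then I argue via commutators with the finite set $g_1,\ldots,g_k$: the commutator $[\gamma'', g_i]$ lies in a deeper term. The hard part is that $[\gamma'',g_i]$ need not act trivially on $U_\ell$.

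That is the main obstacle. My first attempt to resolve it is to pass to a power. Because $\G_\ell$ has finite index, there is an $m$ with $g_i^m \in \G_\ell$ for all $i$. I will exploit the finite-index normal core $C_\ell$, which acts trivially on every $X_j$ for $j \le \ell$ but not necessarily on $U_\ell$; hence I will instead use the torsion-free quotient structure. In a finitely generated nilpotent group, if $\gamma^{m^r}$ commutes with $g^{m}$, then suitable powers commute. So I will replace $\gamma''$ by $\gamma''^{M}$ for large $M$ to land in a subgroup where commutators with the $g_i$ are controlled. The case where $\gamma''$ has finite order will be handled separately, by noting that the torsion subgroup of $\G$ is finite and normal and using effectiveness on the finite part.

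Once a nontrivial element trivial on $U_\ell$ commutes with all $g_i$, it is trivial on every $g_iU_\ell$, hence on $\fX$. This contradicts effectiveness and proves the proposition.
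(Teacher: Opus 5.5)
Your proposal cannot be completed, because the proposition is false for nilpotent groups with torsion. Your plan becomes vague exactly where it fails: the finite-order case that is ``handled separately.'' Take $\G=\mZ\times D_4$ with $D_4=\langle r,s\mid r^4=s^2=e,\ srs^{-1}=r^{-1}\rangle$ and $K=\langle s\rangle$. Let it act on $\fX=\whmZ\times D_4/K$ by $(n,d)\cdot(y,cK)=(y+n,\,dcK)$. This is an isometric minimal action on a Cantor set. It is effective: the conjugates of $K$ are $\langle s\rangle$ and $\langle r^2s\rangle$, so $K$ is core-free. But $(0,s)$ fixes the clopen set $\whmZ\times\{K\}$ pointwise, so the action is not topologically free. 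This is Example~\ref{ex-product-lqa-coherent} with a nilpotent finite group in place of a simple one.

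The obstacle you isolated is real: $[\gamma'',g_i]$ need not act trivially on $U_\ell$. The paper's induction on the upper central series does not overcome it. Its step $h\cdot g\cdot y=h\cdot g\cdot[g,h]\cdot y$ presupposes $[g,h]\cdot y=y$. The inductive hypothesis only says that $h$ fixes $g'\cdot U$ pointwise for $g'\in\G_{(i)}$, which does not give this. In the example, take $h=(0,s)$ and $g=(0,r)$. Then $[g,h]=(0,r^2)$ moves $U=\whmZ\times\{K\}$, and $h$ does move $g\cdot U$.

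For torsion-free $\G$ the statement holds, and your first reduction is the right start. The kernel of $\G_\ell\to\Homeo(U_\ell)$ is normal in $\G_\ell$, so your iterated commutators give $\gamma'\neq e$ acting trivially on $U_\ell$ and central in $\G_\ell$. The missing idea is to take roots of $g$ rather than powers of $\gamma''$. Given $g\in\G$, choose $m\geq 1$ with $g^m\in\G_\ell$. Then $(\gamma' g\gamma'^{-1})^m=g^m$, and the unique root property of torsion-free nilpotent groups (Definition~\ref{def-uniqueroot}) gives $\gamma' g\gamma'^{-1}=g$. So $\gamma'$ is central in $\G$, fixes every $g\cdot U_\ell$ pointwise, and hence acts trivially on $\fX$, contradicting effectiveness. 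The lower-central-series and coset-representative steps are unnecessary.

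Your power idea does have a correct role in general, but for a weaker conclusion. Let $T$ be the finite torsion subgroup, and suppose the kernel is not contained in $T$. The same argument in $\G/T$ yields $k$ in the kernel with $k\notin T$ and $[k,\G]\subseteq T$. Some power $k^M$ centralizes $T$. Then $k^{M|T|}$ is a nontrivial central element acting trivially on $U_\ell$, a contradiction. So effective odometers of finitely generated nilpotent groups are coherent, with kernels inside $T$; this is what Corollary~\ref{cor-nilp} actually needs. Topological freeness additionally requires $\G$ to be torsion-free, via Lemma~\ref{lem-coherentqa}.
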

 \proof

Let $U \subset \fX$ be an adapted set and let $h\in \G_U$ such that $\Phi_U(h)$ is the identity on $U$. We claim that $\Phi(h)$ acts as the identity on $X_{\cG}$.  We proceed by induction on the upper central series
$$\{e\} = \G_{(0)} \subset \G_{(1)} \subset \cdots \subset \G_{(k)} = \G$$
So $\G_{(1)}$ is the center of $\G$, and $\G_{(i+1)} = \{g \in \G \mid [g,h] \in \G_{(i)} \ \forall h \in \G\}$.

Let $g \in \G_{(1)}$, then for $y \in U$, we have 
$h \cdot g \cdot y = g \cdot h \cdot y = g \cdot y$. Thus $\Phi(h)$ acts as the identity on the union of the clopen sets $\{g \cdot U \mid g \in \G_{(1)} \}$.

For $1 \leq i < k$, assume that $\Phi(h)$ acts as the identity on the union of the clopen sets $\{g\cdot U \mid g \in \G_{(i)} \}$.
Let $g \in \G_{(i+1)}$ then for $y \in  U$, we have $[g,h] = g^{-1}h^{-1}g h \in \G_{(i)}$ and
$$h \cdot g \cdot y = h \cdot    g \cdot [g,h]  \cdot y = h \cdot g \cdot g^{-1}h^{-1}g h \cdot y =  g \cdot h \cdot y = g \cdot y.$$

Thus, $\Phi(h)$ acts as the identity on the union of the clopen sets $\{g\cdot U \mid g \in \G_{(i+1)} \}$.
It follows by finite induction that $\Phi(h)$ acts as the identity on   the union of the clopen sets $\{g \cdot U \mid g \in \G\}$,  which is all of $\fX$. Since the action is effective, $h = e$. Thus for every $g \in \G$ the set ${\rm Fix}(g)$ is nowhere dense, and $\Iso(\Phi)$ is meager.
  \endproof

We recall   another  property of groups that has   implications for studying odometer actions.
\begin{defn}[Unique root property] \label{def-uniqueroot}
 A group $\G$ satisfies the \emph{unique root property} if  for any $n\geq 2$ and $g,h \in \G$, 
$g^n = h^n$ implies $g= h$. 
\end{defn}
Free groups, torsion-free nilpotent   groups, and fundamental groups of closed orientable surfaces have the unique root property (see \cite[Lemma~2.2]{BartholdiBogopolski2010}).

This property is related to the notion of $\cD$-group, or a \emph{divisible} group, which assumes that in addition, every element of $\G$ has a unique $n^{th}$ root for all $n > 0$.
Section~2 of \cite{BartholdiBogopolski2010} gives a series of consequences of the unique root property. The proofs of the following are immediate.
\begin{lemma}
If $\G$ has the unique root property, then $\G$ is torsion-free.
\end{lemma}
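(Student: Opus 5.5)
The plan is to argue by contradiction and apply the unique root property with the trivial comparison element. Suppose $\G$ has the unique root property, and suppose some $g \in \G$ has finite order. Then there is an integer $n \geq 1$ with $g^n = e$. If $n = 1$, then $g = e$ and there is nothing to prove, so we may take $n \geq 2$ (for instance, replace $n$ by $2n$ if necessary).

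The key step is to compare $g$ with the identity $h = e$. We have $g^n = e = e^n$ with $n \geq 2$. Definition~\ref{def-uniqueroot} then gives $g = e$. Hence every element of finite order is trivial, and $\G$ is torsion-free.

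There is no real obstacle here. The only point needing care is to ensure the exponent satisfies $n \geq 2$, as the definition requires, and this is handled by the remark above. The argument uses nothing beyond Definition~\ref{def-uniqueroot}.
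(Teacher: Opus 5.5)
Your proof is correct and is exactly the argument the paper has in mind; the paper omits it as ``immediate.'' Comparing $g^n = e = e^n$ for some $n \geq 2$ and applying Definition~\ref{def-uniqueroot} gives $g = e$, and your remark about replacing $n$ by $2n$ correctly takes care of the requirement $n \geq 2$.
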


\begin{lemma}\cite[Lemma~2.4]{BartholdiBogopolski2010}\label{lem-uinquerootext}
 Suppose $\G$ satisfies the unique root property, and $H  \subset \G$ has 
finite index, then the natural map $\Aut(H) \to  \Comm(\G)$ is injective.
\end{lemma}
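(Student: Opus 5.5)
We must show: if $\G$ has the unique root property and $H\subset\G$ has finite index, then the natural map $\Aut(H)\to\Comm(\G)$, sending $\alpha$ to the class $[\alpha\colon H\to H]$, is injective. The map is a homomorphism, so it suffices to show its kernel is trivial. Concretely, let $\alpha\in\Aut(H)$ with $[\alpha]$ trivial in $\Comm(\G)$. By the definition of the equivalence relation $\sim$, this means there is a finite index subgroup $H_3\subset H$ with $\alpha|H_3=\mathrm{id}$. The goal is to conclude $\alpha=\mathrm{id}_H$.

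Replacing $H_3$ by its normal core in $H$, we may assume $H_3$ is normal in $H$ of finite index, say $n=[H:H_3]$. Then for every $g\in H$ the image of $g$ in the finite group $H/H_3$ has order dividing $n!$ (or just some $m\ge1$ depending on $g$). Hence $g^m\in H_3$ for some $m\geq 1$.

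Now apply $\alpha$. Since $\alpha$ is a homomorphism fixing $H_3$ pointwise,
\[ \alpha(g)^m=\alpha(g^m)=g^m . \]
If $m=1$ then $g\in H_3$ and $\alpha(g)=g$ directly. If $m\ge 2$, the unique root property of $\G$, applied to $\alpha(g),g\in H\subset\G$, gives $\alpha(g)=g$. Thus $\alpha=\mathrm{id}_H$, and the kernel is trivial.

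The only point needing care is that $H_3$ need not be normal, so the normal-core reduction is used. Alternatively, one can avoid it: since $H_3$ has finite index, the cosets $g^kH_3$ for $k\ge0$ cannot all be distinct, so some positive power $g^m$ lies in $H_3$ without assuming normality. No other obstacle is expected; torsion-freeness is not even needed separately.
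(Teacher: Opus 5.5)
Your proof is correct and is essentially the standard argument behind \cite[Lemma~2.4]{BartholdiBogopolski2010}, which the paper cites and calls immediate: an automorphism that is trivial in $\Comm(\G)$ fixes a finite-index subgroup $H_3$ pointwise, and each $g\in H$ has a positive power $g^m\in H_3$, so $\alpha(g)^m=g^m$ forces $\alpha(g)=g$ by unique roots. Your pigeonhole remark correctly shows that the normal-core reduction is unnecessary, and your separate treatment of $m=1$ (equivalently, replacing $m$ by $2m$) is handled properly.
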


\section{Lifting dynamical commensurators}\label{sec-lifting}

   In this section, we use the dynamical properties of the monodromy odometer      $(X_{\cG}, \G, \Phi)$, as  discussed in the previous section, to prove the existence of the map $\Theta_{\cG}$ in diagram \eqref{eq-diagram}. This yields the proof of  Theorem \ref{thm-injective}.  As a consequence, we obtain  conditions such that  a commensurator      $[\phi] \in \Comm(\G)$ is in the image of the characteristic map $\chi \colon \MCG(\fM_{\cP} ; \whx)  \to \Comm(\G)$. 
    
 We restate Theorem \ref{thm-injective} in Theorem \ref{thm-gottalift} for convenience of the reader. Given a basepoint $\whx \in \fM_{\cP}$ we denote by $x$ the corresponding point in the transversal space  $X_{\cG}$.
  
   \begin{thm}\label{thm-gottalift}
 Let    $(X_{\cG}, \G, \Phi)$  be  an effective and coherent   odometer.   Let  $x \in X_{\cG}$. Then there exists an injection 
$$\Theta_{\cG} \colon \Comm(X_{\cG}, \G, \Phi, x) \to \Comm(\G)$$
such that  $\chi = \Theta_{\cG} \circ \sigma_{\cP,  \whx}$. That is, the diagram \eqref{eq-diagram} commutes.  
 \end{thm}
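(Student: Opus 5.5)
Given a dynamical commensurator $h_U \colon U \to V$ at $x$, it induces an isomorphism $\Theta_U \colon \cH_U \to \cH_V$ of restricted holonomy groups. The plan is to turn $\Theta_U$ into an isomorphism between finite-index subgroups of $\G$, and hence a class in $\Comm(\G)$. By coherence, the maps $\Phi_U \colon \G_U \to \cH_U$ and $\Phi_V \colon \G_V \to \cH_V$ are surjective with finite kernels $N_U$ and $N_V$. Effectiveness makes $\G$ residually finite. The first step is to show that $N_U$ meets a sufficiently deep subgroup of the chain trivially. Since $N_U$ is finite, each $e \ne \gamma \in N_U$ lies outside $\G_U \cap C_\ell$ for $\ell$ large, because $\bigcap_\ell C_\ell = \{e\}$. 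Hence there is $\ell$ with $C_\ell \cap N_U = \{e\}$, and similarly for $N_V$.

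Choose $\ell$ so large that $U_\ell \subset U \cap V$, and such that $\Phi_U$ is injective on the finite-index subgroup $H' = \G_U \cap C_\ell$. Then $\Phi_U(H')$ is a finite-index subgroup of $\cH_U$, and $\Theta_U(\Phi_U(H'))$ has finite index in $\cH_V$. Its preimage in $\G_V$ is a finite-index subgroup, and intersecting it with $C_{\ell'}$ for $\ell'$ large makes $\Phi_V$ injective there. Shrinking $H'$ correspondingly, we obtain finite-index subgroups $H \subset \G_U$ and $K \subset \G_V$ on which $\Phi_U$ and $\Phi_V$ are injective, with $\Theta_U(\Phi_U(H)) = \Phi_V(K)$. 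Define
$$\phi = (\Phi_V|_K)^{-1} \circ \Theta_U \circ \Phi_U|_H \colon H \to K$$
and set $\Theta_{\cG}([h_U,U,V]) = [\phi]$. Since $\Comm(\G)$ identifies maps agreeing on finite-index subgroups, different choices of $H$ give the same class.

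Next, check well-definedness on germs. If $h_U$ and $h_{U'}$ agree on an adapted $U'' \ni x$, then for $\gamma \in \G_{U''}$ the induced holonomy maps on $U''$ coincide. The restriction from $\cH_U$ to $\cH_{U''}$ is compatible with $\Phi$, since $\G_{U''} \subset \G_U$ has finite index because adapted sets have finite-index stabilizers. So the two resulting $\phi$'s agree on a finite-index subgroup of $\G_{U''}$. Proving that $\Theta_{\cG}$ is a homomorphism is similar: it reduces to composing germs on smaller adapted sets.

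For injectivity, suppose $[\phi]$ is trivial, so $\phi = \mathrm{id}$ on some finite-index $H$. Then $h_U \circ \Phi(\gamma) = \Phi(\gamma) \circ h_U$ on a small adapted $U'' \ni x$ for all $\gamma$ in $H \cap \G_{U''}$. This subgroup acts minimally on $U''$ (because of the odometer structure). Since $h_U$ fixes $x$ and commutes with a minimal equicontinuous action, $h_U(\gamma x) = \gamma x$ on the dense orbit, so $h_U = \mathrm{id}$ on $U''$ by continuity. Hence the germ is trivial.

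Commutativity $\chi = \Theta_{\cG} \circ \sigma_{\cP,\whx}$ follows by unwinding definitions. Both $\chi([h])$ and $\sigma$ are computed from lifts of loops in $M_{i_0}$ to the baseleaf via the shuffle map of Theorem~\ref{thm-RT}. Holonomy along such a lifted loop is exactly $\Phi_U(\gamma)$, and $h$ carries it to holonomy along the image loop, whose class is $h_\#(\gamma)$. So on a deep subgroup $\Theta_U \circ \Phi_U = \Phi_V \circ h_\#$, which gives $\phi = h_\#$ there. I expect this last identification to be the main obstacle, since it requires matching the $\e$-homotopy approximation with the germ of holonomy at $\whx$.
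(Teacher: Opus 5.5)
Your proposal is correct and follows essentially the paper's route. You kill the finite kernel of $\Phi_U$ by intersecting with a deep normal core $C_\ell$, which is the content of Lemma~\ref{lem-lifting}. You then lift $\Theta_U$ through the same intersection pull-back to get $\phi = (\Phi_V|_K)^{-1}\circ\Theta_U\circ\Phi_U|_H$, check independence of the germ, and unwind the shuffle-map definition of $\chi$ for commutativity.

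Your injectivity argument supplies a step the paper's proof omits, and it is sound. Two refinements would tighten it. First, the cleanest justification is that $h_U$ is the identity on $\overline{H\cdot x}$, which contains a neighborhood of $x$ because $H$ has finite index; minimality of $H\cap\G_{U''}$ on $U''$ holds only for suitably small $U''$. Second, applying the same argument to two germs with $\phi_1=\phi_2$ on $H$ gives injectivity directly, without invoking the homomorphism property.
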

  \proof

We first sketch the idea of the proof.
 For $h \in \Homeo(\fM_{\cP} ; \whx)$, the image $\chi(h) \in \Comm(\G)$ is defined using the zig-zag maps in \eqref{eq-shuffle} which exist  by the theorems of  Mioduszewski   \cite{Mioduszewski1963} and   Rogers and Tollefson  \cite{RT1972b}. 
More precisely,  
 $\chi(h)$ is defined using a map  $\vp_{i_{\ell}} \colon M_{{i_{\ell}}} \to M_{{\ell}}$ between two  stages of the presentation $\cP_{\cG}$,  which induces the commensurator map as in \eqref{eq-defchi}.

 The map  $\sigma_{\cP,  \whx} \colon \MCG(\fM_{\cP} ; \whx) \to \Comm(X_{\cG}, \G, \Phi, x)$  in  Theorem~\ref{thm-charmap}  is defined   by restricting a given homeomorphism $h \in \Homeo(\fM_{\cP} ; \whx)$ to a transversal neighborhood through $\whx$, yielding a return equivalence    $h_U \colon U \to V$. Then $h_U$ induces a map $\Theta_U \colon \cH_U \to \cH_V$ of holonomy groups.

 Thus, the map $\Theta_{\cG}$   is obtained by lifting the isomorphism $\Theta_U \colon \cH_U \to \cH_V$, which is the   holonomy action on a local transversal  to the fibration $\fM_{\cP} \to M_0$, to a virtual map   between the fundamental groups of the manifolds in the tower $\cP_{\cG}$.  
We first prove a technical result.  
 
   \begin{lemma}\label{lem-lifting}
  Let  $(X_{\cG}, \G, \Phi)$  be  an effective and coherent odometer, and let $U \subset X_{\cG}$ be an adapted set. Then there exists a  finite index subgroup $\G_U' \subset \G_U$   such that the     map  $\Phi_U \colon \G_U' \to \Homeo(U)$ is   injective.
  \end{lemma}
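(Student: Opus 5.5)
The plan is to use residual finiteness, which follows from effectiveness, to separate the finitely many elements of the kernel from the identity by a finite index subgroup. Let $K_U = \ker\{\Phi_U \colon \G_U \to \Homeo(U)\}$. By coherence (Definition~\ref{def-coherent}), $K_U$ is a finite subgroup of $\G_U$. It is normal in $\G_U$, being the kernel of a homomorphism. Since the action is effective, Lemma~\ref{lem-injective} gives that $\iota_{\cG} \colon \G \to \fG_{\cG}$ is injective and $\bigcap_{\ell} C_\ell = \{e\}$. In particular, $\G$ is residually finite, with the normal cores $C_\ell$ forming a descending chain of finite index normal subgroups of $\G$ with trivial intersection.

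For each of the finitely many non-identity elements $k \in K_U$ there is an index $\ell_k$ with $k \notin C_{\ell_k}$. Take $\ell = \max_k \ell_k$. Since the chain $C_\ell$ is descending, $C_\ell \cap K_U = \{e\}$. Now set $\G_U' = \G_U \cap C_\ell$. This is an intersection of two finite index subgroups of $\G$, so it has finite index in $\G$, and hence in $\G_U$.

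The restriction of $\Phi_U$ to $\G_U'$ has kernel $K_U \cap \G_U' \subset K_U \cap C_\ell = \{e\}$. Therefore $\Phi_U \colon \G_U' \to \Homeo(U)$ is injective. If one wants $\G_U'$ normal in $\G_U$, this already holds, since $C_\ell$ is normal in $\G$.

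The only real point is the use of effectiveness to get residual finiteness through the cores $C_\ell$. Here one must check that the cores relevant to $\fG_{\cG}$ are exactly those associated with the chain defining $X_{\cG}$. For a general effective odometer $(\fX,\G,\Phi)$, one first passes to a group chain model, for instance via the adapted neighborhood basis at $x$. Alternatively, one can argue directly: effectiveness plus equicontinuity means that each $e \ne k$ acts nontrivially on some finite partition of $\fX$ into translates of a small adapted set $W$. The kernel of the permutation action of $\G$ on that finite partition is then a finite index normal subgroup missing $k$. Intersecting these kernels over the finitely many $k \in K_U \setminus \{e\}$ gives the required normal subgroup, and I expect no further obstacle.
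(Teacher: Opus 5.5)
Your proof is correct and is essentially the paper's argument. The paper takes $\G_U' = \G_U \cap C_{U'}$, where $C_{U'}$ is the kernel of the action of $\G$ on the translates of a sufficiently small adapted set $U' \subset U$; for $U' = U_\ell$ this is exactly your normal core $C_\ell$, and your use of $\bigcap_\ell C_\ell = \{e\}$ together with the finiteness of $K_U$ simply makes explicit the paper's choice of a ``sufficiently small'' $U'$.
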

  \proof
 By assumption, the action is effective, so $\Phi \colon \G \to \Homeo(X_{\cG})$ is injective. Moreover,   the isotropy subgroup  $\G_U \subset \G$ has finite index.
 The kernel $K_U = \ker \{ \Phi_U \colon \G_U \to \Homeo(U)\} \subset \G$ is a finite group by the coherent hypothesis.
 If the   action is  topologically free, then in fact $K_U$ is trivial and so we can take $\G_U' = \G_U$.

Otherwise, the finite group $K_U$ acts effectively on $X_{\cG}$, so for a sufficiently small adapted set $U' \subset U$, every element of  the group $K_U$ acts non-trivially on the set of translates of $U'$. That is,  the   map  $\Phi_U \colon K_U \to \Homeo(\G_U/\G_{U'})$ is injective.

 Let $C_{U'} = \ker \{ \Phi_{U'} \colon \G \to \Homeo(\G/\G_{U'}) \}$ be the kernel of the restricted action, which is a normal subgroup with finite index in $\G$. Then   $g \in C_{U'}$ fixes all points in $\G/\G_{U'}$, while $K_U$ acts effectively on $\G/\G_U$, so $g \not\in K_U$. That is, we have  $K_U \cap C_{U'} = \{e\}$, where $e$ is the identity in $\G$.

Then $\G'_U =  \G_U \cap C_{U'}$ has finite index in  $\G$, and   the restriction $\Phi_U \colon \G'_U \to \Homeo(U)$ is injective.
\endproof

We now construct the map $\Theta_{\cG}$.
 First, by Definition~\ref{def-dyncomm2}, an element  of $\Comm(X_{\cG}, \G, \Phi, x)$  is represented  by a map $h_U \colon U \to V$, where  $U, V \subset X_{\cG}$ are adapted sets with $x \in U \cap V$  and $h_U(x) = x$, and  such that the induced map $\Theta_U$  is an isomorphism of     $\cH_U  =\Phi_U(\G_U) \subset \Homeo(U)$ with  $\cH_{V} =\Phi_{V}(\G_V)  \subset \Homeo(V)$.
  
 By    Lemma~\ref{lem-lifting}, there exists   $\G'_{U} \subset \G_U$ with finite index such that    $\Phi_U \colon \G'_U \to \Homeo(U)$ is injective.

Similarly, the map $\Phi_V \colon \G'_V \to \Homeo(V)$ has finite kernel, and  there exists   $\G'_{V} \subset \G_V$ with finite index such that    $\Phi_V \colon \G'_V \to \Homeo(V)$ is injective.

Now form the ```intersection pull-back'',  which has finite index in $\G$, 
\begin{equation} \label{eq-induced}
\G''_U = \Phi_U^{-1} \left\{ \Phi_U(\G'_U) \cap \Theta_U^{-1}(\Phi_V(\G'_V)) \right\} \ .
\end{equation}
 
The composition $\phi = \Phi_V^{-1} \circ \Theta_U \circ \Phi_U \colon \G''_U \to \G''_V$ defines  a commensurator   for $\G$, where $\G_V'' = \phi(\G_U'')$.  
Then define  $\Theta_{\cG}[h_U,U,V]  = [\phi] \in \Comm(\G)$.

Now suppose there are given two representatives of $[h_U,U,V]$, say $h_U \colon U \to V$ and $h'_{U'} \colon U' \to V'$, such that $(h_U,U,V) \comm (h'_{U'},U',V')$. Then there exists an adapted set $U''$ with $x \in U'' \subset U \cap U'$ such that $h_U|U'' = h'_{U'}|U''$.

Let $\phi: \G''_U \to \G''_V$ and $\phi': \G''_{U'} \to \G''_{V'}$ be the commensurators defined by \eqref{eq-induced}.
The stabilizer $\G_{U''}$ has finite index in both $\G_U$ and $\G_{U'}$ so the restrictions of the maps $\phi $ and  $\phi'  $ agree on the intersection $\G_{U''} \cap \G_U''  \cap \G_{U'}''$. Thus, the maps $\phi$ and $\phi'$ determine the same class in $\Comm(\G)$, so $\Theta_{\cG}[h_U,U,V]$ is well-defined.

Finally, for $h \in \Homeo(\fM_{\cP} ; \whx)$, we   show  that the diagram \eqref{eq-diagram} commutes for the map $\Theta_{\cG}$. That is, we have  $\chi(h) = \Theta_{\cG} \circ \sigma_{\cP,  \whx}(h)$.     
Recall that  the map $\chi$ is constructed from an approximation  $\{\varphi_{i_\ell} \colon M_{i_{\ell}} \to M_i \mid \ell \geq 0\}$    to $h$, and then the initial map $\vp_{i_{0}} \colon M_{i_{0}} \to M_{0}$  
 induces the inclusion $\vp_* \colon \G_{i_0} \to \G_0$.  The commensurator $\chi(h) \in \Comm(\G)$ is then defined as in diagram \eqref{eq-defchi}.

The map  $\vp_* \colon \G_{i_0} \to \G_0$ restricts to inclusions  on the  subgroups $\G_{i_{\ell}} \subset \G_{i_0}$ for $\ell > 0$, so  induces a map $\whvp_{i_{0}} \colon U_{i_{\ell}} \to X_{\cG}$ by the formula \eqref{eq-invcosetlim}.  The action of $\G_{i_{\ell}}$ on $U_{i_{\ell}}$ is given by restricting the left action of $\G$  on $X_{\cG}$. 

On the other hand,  the map  $\sigma_{\cP,  \whx}$ is defined by   restricting the homeomorphism $h$ to a transversal $x \in U \subset X_{\cG}$ through the basepoint $\whx$. By modifying $h | U$ by a small leafwise homotopy, we may assume that $h$ maps $U$ into the fiber $X_{\cG}$. We apply this construction to the clopen subset $U = U_{i_{\ell}}$ for $\ell$ sufficiently large, to obtain 
$h| U_{i_{\ell}} = \whvp_{i_{\ell}} \colon U_{i_{\ell}} \to X_{\cG}$.
 Then      $\Theta_U \colon \cH_{U_{i_{\ell}}} \to \cH_V$   is induced from  the map $\vp_* \colon \G_{i_0} \to \G_0$.
This induced map is by definition the class   $ \sigma_{\cP,  \whx}(h) = [h_U,U,V]$. 

Then $ \Theta_{\cG} \circ \sigma_{\cP,  \whx}(h) = \Theta_{\cG} [h_U,U,V] $ is the commensurator  $\G_U'' \to \G_V''$ constructed as in \eqref{eq-induced} from a lift of the map  $\Theta_U \colon \cH_{U_{i_{\ell}}} \to \cH_V$. But this lift is just the restriction of 
$\vp_* \colon \G_{i_0} \to \G_0$ to $\G_U'' \subset \G_{i_0}$, which equals the commensurator $\chi(h)$ defined above. 
  \endproof

\section{The Borel Property}\label{sec-classify}

Recall that a finite $CW$-complex  $Y$ is \emph{aspherical} \cite{FarrellJones1990} if it is connected  and its universal covering space  is  contractible. 
If $Y$ and $Y'$ are aspherical, then an isomorphism $\alpha \colon \pi_1(Y, y_0) \to \pi_1(Y', y_0')$ induces a homotopy equivalence $h_{\alpha} \colon Y \to Y'$ with $h_{\alpha}(y_0) = y_0'$. There are many classes of closed manifolds which are aspherical, including all surfaces of genus $g \geq 1$, and many irreducible 3-manifolds. Chapter 2 of \cite{Weinberger2023} gives an overview of constructions of many further classes of examples.

Based on the examples of surfaces and Mostow Rigidity for negatively curved manifolds, one has:
\begin{defn}\label{def-toprigid}
A closed manifold $N$ is said to be \emph{topologically rigid} if, given a closed manifold $M$, every homotopy equivalence $h \colon M \to N$   is homotopic to a homeomorphism.
\end{defn}
Following remarks by Borel, the following became the inspiration for many remarkable results:
\begin{conj}[Borel Conjecture]
Every aspherical closed manifold is topologically rigid.
\end{conj}
 
\begin{remark}\label{rmk-borelex}
The Borel Conjecture has been proven for many classes of aspherical manifolds:
\begin{itemize}
\item the torus $\mT^n$ for all $n \geq 1$,
\item all  infra-nilmanifolds,
\item   closed Riemannian manifolds $Y$ with negative sectional curvatures,
\item   closed Riemannian manifolds $Y$ of dimension $n \ne 3,4$ with non-positive sectional curvatures,
\item all closed irreducible 3-manifolds.
\end{itemize}
\end{remark}
Chapter 2 of  \cite{Weinberger2023} gives a more extensive description of manifolds which satisfy the Borel Conjecture.
See also   \cite{DavisHillman2025}, and     \cite{Luck2026} for further  examples.
Each class of manifolds in Remark~\ref{rmk-borelex} is closed under taking covers,  which was the motivation for introducing the terminology   ``strongly Borel''   in the work \cite{CHL2019a}, and also the notion of ``ultra Borel'' as follows:
\begin{defn}\label{def-stronglyborel}
A closed aspherical manifold $M$ is said to be \emph{Borel} if it satisfies the Borel Conjecture. It is said to be \emph{strongly Borel} if every finite covering of $M$ satisfies the Borel Conjecture, and is said to be \emph{ultra Borel} if, in addition, the induced homeomorphism is unique up to isotopy.
\end{defn}
The uniqueness property is discussed by Weinberger in terms of the surgery sets for aspherical manifolds in \cite[Chapter 3]{Weinberger2023}, and by Davis and Hillman in \cite[Section 3]{DavisHillman2025}. All of the classes of manifolds above satisfy the ultra Borel property.
  
  We next apply    the Borel Property to the solution of   Problem~\ref{prob-chi}. Let $M_0$ be a closed aspherical manifold with $\G =\pi_1(M_0, x_0)$, and let $\phi \colon H \to K$ be a commensurator for $\G$. Let $M_H \to M_0$ be the covering determined by $H$, and $M_K \to M_0$ the covering determined by $K$. Then $\phi$ induces a homotopy equivalence $g_{\phi} \colon M_H \to M_K$. Suppose that $M_0$ is ultra Borel, then $g_{\phi}$ determines a unique isotopy class of homeomorphisms $\whg_{\phi} \colon M_H \to M_K$.
 
Given  a solenoidal manifold $\fM_{\cP}$ over $M_0$, and  $\phi \colon H \to K$, we next consider when does the induced map on coverings $\whg_{\phi} \colon M_H \to M_K$ induce a map $h_{\phi} \in \Homeo(\fM_{\cP};  \whx)$ with $\chi(h_{\phi}) = [\phi]$?
The answer to this uses  the   suspension construction from the theory of foliations, which  is discussed   for solenoidal manifolds in \cite[Theorem~3.2]{CHL2019a}, and more generally for foliated spaces  in \cite[Section~3.1]{CandelConlon2000}.

  Let $(X_{\cG}, \G, \Phi)$ denote the odometer associated to a group chain $\cG$ in $\G$ as in \eqref{eq-invcosetlim}. Let $\fM_{\cP}$ denote the solenoidal manifold associated to the presentation $\cP_{\cG}$ as in \eqref{eq-invlimit}. For a point $\whx \in \fM_{\cP}$, denote by $x = (x_\ell) \in X_{\cG}$ the corresponding point in the Cantor space of the odometer model.
  For each $\ell \geq 0$, the coordinate projection  $\whq_{\ell} \colon \fM_{\cP} \to M_{\ell}$  is a fibration, where the fiber   $\fX_{\ell} = \whq_{\ell}^{-1}(x_{\ell})$ is  a Cantor space, and is identified with the closure $U_{\ell} \subset X_{\cG}$ of the orbit of $\G_{\ell} \subset \G$ acting on the basepoint $x$. Then the restriction of $\Phi$ to $U_\ell$ yields an odometer action $\Phi_{\ell} \colon \G_{\ell} \times U_{\ell} \to U_{\ell}$.

Let $\wtM$ denote the universal covering of  $M$ with the right action of $\G$ by deck translations. We convert this to a left action in the usual way, setting $\gamma \cdot x = x \cdot \gamma^{-1}$. Then we obtain the left action of $\G$ on $\wtM \times U_{\ell} \to U_{\ell}$, where $\gamma \cdot (z,y) = (z \cdot \gamma^{-1} , \Phi_{\ell}(\gamma)(y))$ for $\gamma \in \G_{\ell}$.
Then define  
\begin{equation}\label{eq-suspensionfols}
\fM_{U_{\ell}} = \wtM \times U_{\ell} / (z , y) \sim (z \cdot \gamma^{-1} , \Phi_{\ell}(\gamma)(y))  ~ {\rm for }~ z \in \wtM , ~ \gamma \in \G_{\ell}, ~ y \in U_{\ell},
\end{equation}
which is a   foliated space   with dense leaves \cite{CandelConlon2000,MS2006}.   
\begin{thm}   \label{thm-weaksuspensions}
For each $\ell \geq 0$, there is a canonical homeomorphism $\tau_{\ell}  \colon \fM_{U_{\ell}} \to \fM_{\cP}$.
\end{thm}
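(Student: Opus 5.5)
The plan is to reduce to the case $\ell = 0$ and then write down an explicit, $\G$-equivariant map at the level of the universal cover, checking that it descends to a homeomorphism.

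\textbf{Reduction to $\ell=0$.} By Lemma~\ref{lem-canonical} there is a canonical homeomorphism $\sigma_\ell \colon \fM_{\cP_\ell} \cong \fM_{\cP}$. The truncated tower $\cP_\ell$ is the tower over $M_\ell$ defined by the group chain $\{\G_k\}_{k \ge \ell}$ in $\G_\ell = \pi_1(M_\ell, x_\ell)$. The universal cover of $M_\ell$ is again $\wtM$, and $U_\ell$ is identified with the inverse limit $\varprojlim_{k\ge \ell} \G_\ell/\G_k$. So it suffices to construct $\tau_0 \colon \fM_{U_0} \to \fM_{\cP}$ for a general chain and set $\tau_\ell = \sigma_\ell \circ \tau_0'$, where $\tau_0'$ is the $\ell=0$ map for the tower $\cP_\ell$.

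\textbf{Construction of $\tau_0$.} Realize each covering as $M_k = \wtM/\G_k$, with $\wtM \to M_k$ sending the base lift $\wtx_0$ to $x_k$. For a point $y = (g_k\G_k)_{k\ge 0} \in X_{\cG}$, I would send $(z,y)$ to the sequence
\[
\bigl(\, [z \cdot g_k]_{\G_k} \,\bigr)_{k \ge 0} \in \prod_k M_k ,
\]
where $[\,\cdot\,]_{\G_k}$ denotes the class in $\wtM/\G_k$. The steps, in order, are as follows.
\begin{enumerate}
\item \emph{Well-defined on $\wtM \times X_{\cG}$.} Replacing $g_k$ by $g_k\delta$ with $\delta \in \G_k$ changes $z\cdot g_k$ by right multiplication by $\delta$, which does not change the class in $\wtM/\G_k$.
\item \emph{Lands in $\fM_{\cP}$.} Coherence $g_{k+1}\G_k = g_k\G_k$ gives compatibility of the coordinates under the projections $p_{k+1}$.
\item \emph{Descends to $\fM_{U_0}$.} One checks invariance under $(z,y) \sim (z\cdot\gamma^{-1}, \gamma y)$. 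The coordinate $z\cdot g_k$ becomes $z\gamma^{-1}\gamma g_k$, which is the same point, so the map is constant on equivalence classes.
\item \emph{Continuity.} Each coordinate depends only on $z$ and the finitely determined value $g_k\G_k$, which is locally constant in $y$.
\end{enumerate}

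\textbf{Bijectivity and homeomorphism.} For surjectivity, given $(w_k) \in \fM_{\cP}$, pick a lift $z \in \wtM$ of $w_0$. Inductively, the lifts of $w_k$ to $\wtM$ differ from $z$ by elements of $\G$, and the compatible choices of cosets $g_k\G_k$ with $[z g_k]_{\G_k} = w_k$ form a point of $X_{\cG}$.

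For injectivity, suppose $(z,y)$ and $(z',y')$ have the same image. The equality of the $0$-th coordinates gives $z' = z\gamma^{-1}$ for some $\gamma \in \G$. After applying the relation we may assume $z'=z$. Freeness of the deck action then forces $g_k\G_k = g'_k\G_k$ for all $k$, so $y=y'$. A point to handle with care is that each coset should be determined only up to the stabilizer, and the free action of $\G$ on $\wtM$ is what resolves this.

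Since $\fM_{U_0}$ is compact (it is the image of the compact set $F \times X_{\cG}$ for a compact fundamental domain $F$) and $\fM_{\cP}$ is Hausdorff, the continuous bijection $\tau_0$ is a homeomorphism.

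\textbf{Main obstacle.} I expect the real difficulty to be bookkeeping rather than substance. The left and right conventions must be chosen consistently: the $\G$-action on $X_{\cG}$ is a left action on cosets $g\G_k$, while deck transformations act on the right. The formula $z\cdot g_k$ has to be compatible with both the relation in \eqref{eq-suspensionfols} and the chosen basepoints $x_k$, so that $\tau_\ell$ is \emph{canonical}, meaning it sends $[\wtx_0, x]$ to $\whx$. I would fix these conventions first, verifying step~3 and the basepoint condition, and adjust the formula (for instance using $z\cdot g_k^{-1}$) if the signs fail.
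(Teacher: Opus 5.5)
Your proposal is correct and takes the same route as the paper, which also factors $\tau_\ell = \sigma_\ell \circ \rho_\ell$ through the truncated tower, with $\rho_\ell \colon \fM_{U_\ell} \to \fM_{\cP_\ell}$ and $\sigma_\ell$ taken from Lemma~\ref{lem-canonical}. The only difference is that the paper cites the lifting property for coverings (Clark--Hurder) for $\rho_\ell$, while you construct it explicitly via $(z,y) \mapsto ([z\cdot g_k]_{\G_k})_k$ and prove it is a homeomorphism; with the paper's right deck action this formula already respects the relation and sends $[\wtx_0, x]$ to $\whx$, so no sign adjustment is needed.
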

\proof
For   all $\ell \geq 0$,   there is a natural homeomorphism 
$\rho_{\ell} \colon  \fM_{U_{\ell}} \to \fM_{\cP_{\ell}}$, where   $\fM_{\cP_{\ell}}$ is the  solenoidal manifold associated to the truncated presentation  as defined in \eqref{eq-presentationell}. This is  a consequence of the lifting property for maps between coverings, as shown in \cite{ClarkHurder2013}.  By Lemma~\ref{lem-canonical},  there is a canonical homeomorphism $\sigma_{\ell} \colon \fM_{\cP_{\ell}} \cong  \fM_{\cP}$. Then set $\tau_{\ell} =  \sigma_{\ell} \circ \rho_{\ell}$. 
\endproof

We single out the special case $\ell =0$ in the above suspension construction:
\begin{defn}\label{def-suspension}
The quotient   $\fM_{\Phi} = \G \backslash (\wtM_0 \times X_{\cG})$ is called the \emph{suspension model} for $\fM_{\cP}$.
\end{defn}

 The basepoints  $x = \bigcap_{\ell > 0} U_{\ell} \in X_{\cG}$ and $x_{\ell} \in M_{\ell}$ define  a basepoint   $\whx_{\ell} \in \fM_{U_{\ell}}$, which is the image of $(\wtx_0, x) \in \wtM \times U_{\ell}$ under the quotient  in \eqref{eq-suspensionfols}.
For $\wtx = \wtx_0$, the leaf $L_{\wtx} \subset \fM_{\cP}$ of $\F_{\cP}$ containing $\wtx$ is the image of $\wtM \times \{x\}$
 under the quotient \eqref{eq-suspensionfols}, and so the fundamental group 
 $$\pi_1(L_{\wtx}, \wtx) = \Iso(\{x\})  = \{\gamma \in \G \mid \gamma \cdot x = x\} \  .$$
Combine this observation with  Lemma~\ref{lem-topfree} to obtain:

 \begin{lemma}\label{lem-simplyconnected}
 Let $\fM_{\Phi}$ be     the suspension of  $(X_\cG,\G,\Phi)$. Suppose that $L_{\whx}$ is simply connected, then 
  $\ker (\cG) =  \bigcap_{\ell \geq 0} \G_{\ell} = \{e\}$, and hence the action  $(X_\cG,\G,\Phi)$ is topologically free.
 \end{lemma}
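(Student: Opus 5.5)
The argument has two steps: identify $\pi_1(L_{\whx},\whx)$ with $\ker(\cG)$, then apply Lemma~\ref{lem-topfree}.

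\emph{Step 1: the fundamental group of the baseleaf.} We use the observation just before the statement, $\pi_1(L_{\whx},\whx) \cong \{\gamma \in \G \mid \gamma \cdot x = x\}$. This holds because the leaf $L_{\whx}$ is the image of $\wtM_0 \times \{x\}$ in the suspension model $\fM_{\Phi}$. The deck action of $\G$ on $\wtM_0$ is free and properly discontinuous, so the quotient map restricted to $\wtM_0\times\{x\}$ identifies $L_{\whx}$ with $\wtM_0/\G_x$, where $\G_x$ is the isotropy group of $x$. Since $\wtM_0$ is simply connected, $\wtM_0 \to L_{\whx}$ is a universal covering with deck group $\G_x$, which gives $\pi_1(L_{\whx}) \cong \G_x$.

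Next we compute $\G_x$ directly in the group chain model. The action is coordinatewise and $x = (e\G_0, e\G_1, \ldots)$, so $\gamma \cdot x = x$ holds exactly when $\gamma\G_\ell = \G_\ell$ for every $\ell$. That is, $\gamma \in \G_\ell$ for all $\ell$, so
$$\G_x = \bigcap_{\ell \geq 0} \G_\ell = \ker(\cG).$$
If $L_{\whx}$ is simply connected, then $\G_x$ is trivial, and therefore $\ker(\cG) = \{e\}$.

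\emph{Step 2: topological freeness.} With $\ker(\cG) = \{e\}$, Lemma~\ref{lem-topfree} applies verbatim and shows that $(X_{\cG}, \G, \Phi)$ is topologically free.

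The only point needing care is the identification in Step 1. One must check that the leaf through $\whx$ corresponds to $\wtM_0 \times \{x\}$ under the homeomorphism of Theorem~\ref{thm-weaksuspensions} with $\ell = 0$, which holds by the choice of basepoints. One must also check that the equivalence relation on $\wtM_0 \times \{x\}$ is generated exactly by $\G_x$. Both are routine. The latter holds because $(z,x) \sim (z\gamma^{-1}, \gamma\cdot x)$ stays in the slice $\wtM_0\times\{x\}$ only when $\gamma \cdot x = x$.
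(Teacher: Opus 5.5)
Your proposal is correct and matches the paper's argument. The paper likewise identifies $\pi_1(L_{\whx},\whx)$ with the isotropy group $\{\gamma \in \G \mid \gamma \cdot x = x\}$, using the fact that the baseleaf is the image of $\wtM_0 \times \{x\}$ in the suspension, and then invokes Lemma~\ref{lem-topfree}. Beyond what the paper writes, you spell out the covering-space justification and the identification of the isotropy group with $\bigcap_{\ell} \G_\ell$, both of which the paper leaves implicit.
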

 
 We now     consider  the map  $\ds 
 \sigma_{\Phi,  \whx} \colon \MCG(\fM_{\cP} ; \whx) \to \Comm(\fX_0, \G, \Phi, x)$, and the proof of Theorem~\ref{thm-surjective}. 
We are given that $M_0$ is an ultra Borel   manifold,  and $\fM_{\cP} \cong \fM_{\Phi}$ is a solenoidal manifold over $M_0$ with  a simply connected leaf $L_{\wtx}$. 

   Let $[h_U, U, V] \in \Comm(\fX_0, \G, \Phi, x)$ be represented by the local return map $h_U \colon U \to V$. We first show there exists $h \in \Homeo(\fM_{\cP}, ; \whx)$ such that  $ \sigma_{\cP,  \whx}(h) = [h_U, U, V]$.  By Lemma~\ref{lem-simplyconnected} the odometer   $(X_{\cG}, \G, \Phi)$  is  topologically free, so by  Theorem~\ref{thm-gottalift}  there exists an injection 
$$\Theta_{\cG} \colon \Comm(X_{\cG}, \G, \Phi, x) \to \Comm(\G)  \  .$$

That is, the local return map $h_U$ can be lifted to a commensurator $\phi_U$ for $\G$. Recall its construction.
Let $\ell > 0$ be sufficiently large so that $x \in U_{\ell} \subset U$.
Let $V_{\ell} = h_U(U_{\ell})$ and note that this is an adapted  neighborhood of $x$ (see   \cite[Lemma~3.4]{CHL2019a} for a proof). 
Then $h_U$ induces an isomorphism $\Theta_{U_{\ell}} \colon \cH_{U_{\ell}} \to \cH_{V_{\ell}}$.
 The action $(X_\cG,\G,\Phi)$ is topologically free, so the restricted action maps 
 $$\Phi_{U_{\ell}} \colon \G_{U_{\ell}} \to \Homeo(U_{\ell}) \quad , \quad \Phi_{V_{\ell}} \colon \G_{V_{\ell}} \to \Homeo(V_{\ell})$$
 are both injective. Thus,  
  $\Theta_{U_{\ell}}$ lifts to an isomorphism  $\phi_{\ell} \colon \G_{U_{\ell}} \to \G_{V_{\ell}}$.
  Then $\Theta_{\cG}([h_U, U, V]  = [\phi_{\ell}]$.

 Let $M_{U_{\ell}} \to M$ be the covering associated to $\G_{U_{\ell}}$ and $M_{V_{\ell}} \to M$  the covering associated to $\G_{V_{\ell}}$. Then   $\phi_{\ell}$ induces a homotopy equivalence $\ophi_{\ell} \colon M_{U_{\ell}} \to M_{V_{\ell}}$, and by assumption that $M$ is ultra Borel, the map $\ophi_{\ell}$ can be chosen to be a homeomorphism, and the isotopy class of the homeomorphism is uniquely defined by $\phi_{\ell}$.  
 
 We have $x \in V_{\ell}$ so there exists $\ell' > \ell$ such that $x \in U_{\ell'} \subset V_{\ell}$, and so $\G_{\ell'} \subset \G_{V_{\ell}} \subset \G$. Define
$$\fM_{V_{\ell}} = \wtM \times V_{\ell} / (z , y) \sim (z \cdot \gamma^{-1} , \Phi_{\ell}(\gamma)(y))  ~ {\rm for }~ z \in \wtM , ~ \gamma \in \G_{V_{\ell}}, ~ y \in V_{\ell} \ . $$
The suspension of the map $\ophi_{\ell}$ yields a homeomorphism of fibrations 
\begin{equation}
\whphi_{\ell} \colon \fM_{U_{\ell}} ~ \cong  \G_{U_{\ell}}\backslash (\wtM \times U_{\ell})   \quad  \stackrel{\phi_{\ell} \times \Theta_{U_{\ell}}}{\longrightarrow} \  \G_{V_{\ell}} \backslash (\wtM \times V_{\ell})  ~ \cong  \fM_{V_{\ell}} \ .
\end{equation}

\bigskip

Then  there is a homeomorphism $\tau_{\ell}^{V} \colon \fM_{V_{\ell}} \to \fM_{\cP}$ of fibrations over $M_0$, which follows by the same proof as for   Theorem~\ref{thm-weaksuspensions}.
  Now   define 
  $h_{\phi} \colon \fM_{\cP} \to \fM_{\cP}$ as the composition of homeomorphisms
\begin{equation}\label{eq-twisted}
h_{\phi} \colon \fM_{\cP} \stackrel{\tau_{\ell}^U}{\longleftarrow} \fM_{U_{\ell}} \stackrel{\whphi_{\ell}}{\longrightarrow}   \fM_{V_{\ell}} \stackrel{\tau_{\ell}^{V}}{\longrightarrow} \fM_{\cP}  \  .
\end{equation}

Thus, we have     $h_{\phi} \in \Homeo(\fM_{\cP};  \whx)$ with  $\chi(h_{\phi}) = [\phi]$. This completes the proof of Theorem~\ref{thm-surjective}.

The following summary of the results   above   is used in the proofs of   Corollaries~\ref{cor-homogeneous} and  \ref{cor-nonhomogeneous}, and  used in the calculations of examples in Section~\ref{sec-examples}.
  \begin{thm} \label{thm-summary}
Let  $M_0$ be an ultra  Borel   manifold, and  $\fM_{\cP}$ be a solenoidal manifold over $M_0$ defined by a group chain $\cG$.  Assume that the leaf $L_{\whx}$ through the basepoint $\whx$  is simply connected.  
Then  
$\ds 
 \sigma_{\cP,  \whx} \colon \MCG(\fM_{\cP} ; \whx) \to \Comm(\fX, \G, \Phi, x)  
$
is an isomorphism, and  $\Theta_{\cG} \colon  \Comm(\fX, \G, \Phi, x)   \to \Comm(\G)$ is injective.
 
  \end{thm}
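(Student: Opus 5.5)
The plan is to assemble Theorem~\ref{thm-summary} from the results already established, dealing with injectivity of $\Theta_{\cG}$ first and then bijectivity of $\sigma_{\cP,\whx}$. Because $L_{\whx}$ is simply connected, Lemma~\ref{lem-simplyconnected} gives $\ker(\cG)=\{e\}$, so the odometer $(X_\cG,\G,\Phi)$ is topologically free. Since $M_0$ is ultra Borel, it is aspherical and $\G$ is torsion-free. We also want the action to be effective. Each normal core satisfies $C_\ell\subset\G_\ell$, so $\bigcap_\ell C_\ell\subset\ker(\cG)=\{e\}$, and Lemma~\ref{lem-injective} then shows the action is effective. Lemma~\ref{lem-coherentqa} says topologically free actions are coherent. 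Theorem~\ref{thm-gottalift} therefore applies and produces an injection $\Theta_{\cG}$ with $\chi=\Theta_{\cG}\circ\sigma_{\cP,\whx}$. This settles the second assertion.

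For surjectivity of $\sigma_{\cP,\whx}$ I will follow the construction given after Lemma~\ref{lem-simplyconnected}. Take a class $[h_U,U,V]$ and shrink $U$ to some $U_\ell$. Topological freeness makes $\Phi_{U_\ell}$ and $\Phi_{V_\ell}$ injective, so $\Theta_{U_\ell}$ lifts to an isomorphism $\phi_\ell\colon\G_{U_\ell}\to\G_{V_\ell}$. The ultra Borel hypothesis realizes $\phi_\ell$ by a basepoint-preserving homeomorphism $\ophi_\ell$ of the corresponding finite covers. Suspending $\ophi_\ell$ together with $h_U$ gives $\whphi_\ell$, and composing with the canonical identifications of Theorem~\ref{thm-weaksuspensions} produces $h_\phi$ as in \eqref{eq-twisted}. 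By construction, the restriction of $h_\phi$ to the transversal $U_\ell$ is $h_U|U_\ell$, so $\sigma_{\cP,\whx}([h_\phi])=[h_U,U,V]$.

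Injectivity of $\sigma_{\cP,\whx}$ is the main obstacle. Suppose $\sigma_{\cP,\whx}([h])$ is trivial. Then $\chi([h])=\Theta_\cG(\sigma([h]))$ is trivial in $\Comm(\G)$. Next, approximate $h$ by a shuffle map $\{\vp_{i_\ell}\}$ using Theorem~\ref{thm-RT}. On some $\G_{i_k}$, the map $(\vp_{i_k})_\#$ agrees with the inclusion up to the basepoint identification $\iota$. Since $M_{i_k}$ and $M_k$ are aspherical, $\vp_{i_k}$ is then based-homotopic to the covering projection $p^{i_k}_k$. The same holds at every later stage $\ell\ge k$, because $\G$ is torsion-free, so by Lemma~\ref{lem-uinquerootext}-type rigidity (or simply because $\chi$ is trivial on the deeper subgroups) the maps are still inclusions. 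The ultra Borel property upgrades these homotopies to isotopies between the lifted homeomorphisms.

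The delicate part is making the stagewise isotopies compatible under the covering maps $p_\ell$, so that they assemble into an isotopy on the inverse limit. The approach I would try first is to use the suspension model $\fM_{U_\ell}$ directly. Here $h$ agrees with the identity on the transversal germ, and the leafwise maps are lifts to $\wtM$ of self-homeomorphisms of $M_{U_\ell}$ isotopic to the identity. I would lift one such isotopy of $M_{U_\ell}$ equivariantly to $\wtM\times U_\ell$, which automatically gives compatibility at all deeper stages. Applying the homeomorphism $\tau_\ell$ then yields an isotopy from $h$ to the identity in $\Homeo(\fM_{\cP};\whx)$. Together with surjectivity, this makes $\sigma_{\cP,\whx}$ an isomorphism.
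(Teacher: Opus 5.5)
Your treatment of the injectivity of $\Theta_{\cG}$ and of the surjectivity of $\sigma_{\cP,\whx}$ is the paper's argument: Lemma~\ref{lem-simplyconnected}, then Theorem~\ref{thm-gottalift}, then the suspension \eqref{eq-twisted} of the ultra Borel homeomorphism $\ophi_\ell$. Your check that $\bigcap_\ell C_\ell\subset\ker(\cG)=\{e\}$ gives effectiveness is a correct detail the paper leaves implicit.

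The gap is in the injectivity of $\sigma_{\cP,\whx}$. The paper does not carry this step out either. Its proof of Theorem~\ref{thm-surjective} uses the uniqueness clause of the ultra Borel property only for the fibered homeomorphisms $h_\phi$ it constructs, which gives a section of $\sigma_{\cP,\whx}$ but not injectivity.

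There is first a smaller, repairable flaw. Agreement of $(\vp_{i_k})_\#$ with the inclusion on some $\G_{i_k}$ does not follow from $\chi([h])=1$. Triviality in $\Comm(\G)$ only gives agreement on some finite-index subgroup, which need not contain any $\G_m$. Torsion-freeness also does not give the unique root property behind Lemma~\ref{lem-uinquerootext}. For example, in the Klein bottle group $\langle a,b\mid bab^{-1}=a^{-1}\rangle$ (a flat, hence ultra Borel, manifold) one has $(ab)^2=b^2$. Moreover $a\mapsto a$, $b\mapsto ab$ is a nontrivial automorphism that is the identity on $\langle a,b^2\rangle$. The fix is to use the dynamical triviality directly. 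We have $h_U=\mathrm{id}$ on some adapted $U''\ni x$, and $\Phi_{U''}$ is injective by topological freeness. So the commutativity $\chi=\Theta_{\cG}\circ\sigma_{\cP,\whx}$ at the level of representatives (proof of Theorem~\ref{thm-gottalift}) makes $h_\#$ the inclusion on a subgroup containing some $\G_m$. The deeper stages then follow from commutativity of \eqref{eq-shuffle} and injectivity of the $(p_\ell)_\#$, with no root argument.

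The serious gap is the passage from the level maps to an isotopy. Theorem~\ref{thm-RT} gives only an $\e$-homotopy through continuous maps to a tower map. Its components $\vp_{i_\ell}\colon M_{i_\ell}\to M_\ell$ are merely continuous maps (here homotopic to the coverings $p^{i_\ell}_\ell$), not homeomorphisms. So the ultra Borel uniqueness clause has nothing to act on, and at best you get that $h$ is homotopic to the identity.

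Your remedy presupposes that, in the model $\fM_{U_\ell}=\G_{U_\ell}\backslash(\wtM\times U_\ell)$, the map $h$ has the form $[z,y]\mapsto[\tilde f(z),y]$ for a lift $\tilde f$ of a homeomorphism $f$ of $M_{U_\ell}$. That is, it presupposes that $h$ preserves the fibration $\fM_{U_\ell}\to M_{U_\ell}$. Nothing forces this. Triviality of the germ constrains $h$ only on a transversal neighborhood of $\whx$. An arbitrary element of $\Homeo(\fM_{\cP};\whx)$ carries leaves to leaves by maps that vary with the transverse coordinate and need not be induced by one homeomorphism of any finite cover.

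The missing ingredient is a fibering theorem: every $h\in\Homeo(\fM_{\cP};\whx)$ is isotopic, through homeomorphisms fixing $\whx$, to the suspension of a homeomorphism between finite covers $M_m$. (For your purpose it would equally suffice that homotopic homeomorphisms of $\fM_{\cP}$ are isotopic.) With such a result your ultra Borel argument would finish injectivity. Without it, your proposal, like the paper's proof, establishes only that $\sigma_{\cP,\whx}$ is surjective.
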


\section{Normalizers} \label{sec-normalizer}
 
In this section, we analyze the group   $\MCG(\fM_{\cP})$ as a topological extension of  $\MCG(\fM_{\cP} ; \whx)$.
We use the isomorphism  $\fM_{\Phi} \cong \fM_{\cP}$, where $\fM_{\Phi}$ is   the suspension model in Definition~\ref{def-suspension} associated to  the odometer action $(X_{\cG}, \G, \Phi)$.  
Then the normalizer group $\fN_{\cG}$   acts via right translations   on  $\fM_{\Phi}$. 
The problem is then to understand how the action of $\fN_{\cG}$ interacts with the elements of $\Homeo(\fM_{\Phi} ; \whx)$, which can be subtle. 
 The   interaction of  commensurators in $\Comm(\G)$   with translations  by elements of $\fN_{\cG}$    plays a central role in the calculation of  $\MCG(\fM_{\Phi})$ for non-homogeneous solenoids.

Recall that $\fG_{\cG}$ is the profinite completion of $\G$ with respect to the group chain $\cG$, and there is an induced isometric action $\oPhi \colon \fG_{\cG} \times X_{\cG} \to X_{\cG}$.  The  basepoint  $x \in X_{\cG}$ is defined by the chain $\cG$, and    $\fD_{\cG} = \fD_{\cG, x} = \{ \whg \in \fG_{\cG} \mid \oPhi(\whg)(x) = x\}$ is the isotopy subgroup at  $x$.  Then  there is an isomorphism $X_{\cG} \cong \fG_{\cG}/\fD_{\cG}$ which is  equivariant for the left  $\fG_{\cG}$-action.
 
  Let  $\fN_{\cG} = \fN_{\fG_{\cG}}(\fD_{\cG})$ denote  the normalizer of   $\fD_{\cG} \subset \fG_{\cG}$, and     $\fA_{\cG} = \fD_{\cG} \backslash \fN_{\cG}$ be the quotient group.

We begin our analysis of  $\MCG(\fM_{\cP})$ by first considering the related problem  for an odometer action $(X_{\cG}, \G, \Phi)$. 
Recall   the isomorphism  group $\fI(X_{\cG}, \G, \Phi)$ of the  action as given by  Definition~\ref{def-normalizer}.
\begin{lemma}\label{lem-quotient}
The right action $X_{\cG} \times \fN_{\cG} \to X_{\cG}$ descends to an action $X_{\cG} \times \fA_{\cG} \to X_{\cG}$, and yields    an inclusion $\rho_{\cG} \colon \fA_{\cG} \to \fI(X_{\cG}, \G, \Phi)$.
\end{lemma}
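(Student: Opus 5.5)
We identify $X_{\cG}$ with the coset space $\fG_{\cG}/\fD_{\cG}$ via the $\fG_{\cG}$-equivariant isomorphism recalled in Section~\ref{sec-odometers}, so that $x$ corresponds to the coset $\fD_{\cG}$. For $n \in \fN_{\cG}$ we define the right translation $R_n(g\fD_{\cG}) = g n^{-1}\fD_{\cG}$, or equivalently $g\fD_{\cG}\cdot n = gn\fD_{\cG}$ with the usual inversion to obtain a left-acting homeomorphism. The first step is to check that this is well defined. If $g\fD_{\cG} = g'\fD_{\cG}$, then $g' = g d$ with $d \in \fD_{\cG}$, and $g'n = g n (n^{-1} d n)$. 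Since $n$ normalizes $\fD_{\cG}$, we have $n^{-1}dn \in \fD_{\cG}$, so $g'n\fD_{\cG} = gn\fD_{\cG}$. Continuity follows because right multiplication on the profinite group $\fG_{\cG}$ is continuous and the quotient map is open. The same computation shows $R_{nm} = R_m \circ R_n$ up to the standard inversion convention, so we obtain a right action of $\fN_{\cG}$ by homeomorphisms of $X_{\cG}$.

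The second step is to show that $\fD_{\cG}$ acts trivially, so that the action descends to $\fA_{\cG} = \fD_{\cG}\backslash\fN_{\cG}$. For $d \in \fD_{\cG}$ we have $g\fD_{\cG}\cdot d = gd\fD_{\cG} = g\fD_{\cG}$. Hence $R_n$ depends only on the coset $\fD_{\cG}n$, and the action factors through $\fA_{\cG}$; this coset space is a group because $\fD_{\cG}$ is normal in $\fN_{\cG}$. Next, each $R_n$ commutes with the left action of $\G$, and indeed of all of $\fG_{\cG}$, by associativity: $\gamma\cdot(g n\fD_{\cG}) = (\gamma g)n\fD_{\cG}$. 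Thus $\Phi(\gamma)\circ R_n = R_n \circ \Phi(\gamma)$, so $R_n$ satisfies \eqref{eq-isomorphism} with $\Theta = \mathrm{id}$. This places $R_n$ in $\fI_0(X_{\cG},\G,\Phi) \subset \fI(X_{\cG},\G,\Phi)$ and defines a homomorphism $\rho_{\cG}\colon \fA_{\cG}\to \fI(X_{\cG},\G,\Phi)$.

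The last step, injectivity, is the only point needing care. Suppose $R_n = \mathrm{id}$. Evaluating at the basepoint gives $n\fD_{\cG} = \fD_{\cG}$, so $n \in \fD_{\cG}$, and its class in $\fA_{\cG}$ is trivial. One must keep the convention straight, right cosets $\fD_{\cG}\backslash\fN_{\cG}$ versus the left coset space $\fG_{\cG}/\fD_{\cG}$, but since $n$ normalizes $\fD_{\cG}$ we have $\fD_{\cG}n = n\fD_{\cG}$, so either convention gives the same conclusion. The expected subtlety is the identification of $\fG_{\cG}/\fD_{\cG}$ with $X_{\cG}$ being equivariant and homeomorphic. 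This is already stated in Section~\ref{sec-odometers}, following \cite{DHL2016a}, so no further argument is needed there.
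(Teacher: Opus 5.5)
Your proposal is correct and follows the same route as the paper, whose proof simply observes that right translation by $\fN_{\cG}$ on $X_{\cG} = \fG_{\cG}/\fD_{\cG}$ commutes with the left $\G$-action and descends to $\fA_{\cG} = \fD_{\cG}\backslash \fN_{\cG}$. You also spell out two points the paper leaves implicit: well-definedness, via $n^{-1}dn \in \fD_{\cG}$, and injectivity, since evaluating at the basepoint coset forces $n \in \fD_{\cG}$; both arguments are sound.
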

 \proof
 The right action of $\fN_{\cG}$ on $X_{\cG} = \fG_{\cG}/\fD_{\cG}$ commutes with the left action $\Phi$ of $\G$ on $X_{\cG}$, and descends to a right action by the quotient group $\fA_{\cG} = \fD_{\cG} \backslash \fN_{\cG}$.
  \endproof

 Let $\Theta_{\cG} \colon \fI(X_{\cG}, \G, \Phi) \to \Aut(\G)$ be the ``time change'' group automorphism in Definition~\ref{def-normalizer}, and let $\Aut_{\cG}(\G)$ denote the image of $\Theta_{\cG}$.  In the context of $\mR^n$-tiling space isomorphisms, Kwapisz calls this the \emph{General Linear Group} of the action \cite{Kwapisz2011}, as it has   range in ${\rm GL}(\mR^n)$.   Introduce
  \begin{equation}\label{eq-normalizerfp}
\fI(X_{\cG}, \G, \Phi, x) = \{ h \in \fI(X_{\cG}, \G, \Phi) \mid h(x) = x \}  \  .
\end{equation}

\begin{lemma}\label{lem-cG-iso}
The restriction $\Theta_{\cG} \colon \fI(X_{\cG}, \G, \Phi, x) \to \Aut_{\cG}(\G)$ is an isomorphism.
\end{lemma}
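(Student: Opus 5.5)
Throughout, $x \in X_{\cG}$ is the basepoint with $\G$-orbit map $\gamma \mapsto \gamma\cdot x$. We use the defining relation of an isomorphism $h$ with time change $\Theta_{\cG}(h)$:
$$h \circ \Phi(\gamma) = \Phi(\Theta_{\cG}(h)(\gamma)) \circ h .$$
The proof has three steps: check that $\Theta_{\cG}$ is a homomorphism, show injectivity, and show surjectivity (which reduces to a tautology).

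\textbf{Homomorphism.} First we check that $\Theta_{\cG}$ is well defined on $\fI(X_{\cG}, \G, \Phi, x)$. This uses the effectiveness implicit in the setup, since $\Phi$ is injective when $\bigcap_\ell C_\ell = \{e\}$. Then $\Theta_{\cG}(h)(\gamma)$ is the unique element $\gamma'$ with $h\Phi(\gamma)h^{-1} = \Phi(\gamma')$. Uniqueness makes $\Theta_{\cG}$ a homomorphism: compose the relations for $h_1$ and $h_2$. If effectiveness fails, one should read $\Theta_{\cG}$ as valued in $\Phi(\G) \cong \G/\ker\Phi$. We would state this assumption explicitly.

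\textbf{Injectivity.} Suppose $h \in \fI(X_{\cG}, \G, \Phi, x)$ has $\Theta_{\cG}(h) = {\rm id}$. Then $h$ commutes with every $\Phi(\gamma)$, and $h(x) = x$. Hence
$$h(\gamma \cdot x) = \gamma \cdot h(x) = \gamma \cdot x$$
for all $\gamma \in \G$. So $h$ is the identity on the orbit $\G\cdot x$, which is dense because the action is minimal. Since $h$ is continuous, $h = {\rm id}$. This gives $\fI(X_{\cG}, \G, \Phi, x) \cap \fI_0 = \{{\rm id}\}$.

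\textbf{Surjectivity.} Let $\alpha \in \Aut_{\cG}(\G)$, so by definition of the image there is some $h \in \fI(X_{\cG}, \G, \Phi)$ with $\Theta_{\cG}(h) = \alpha$. Such $h$ need not fix $x$, so we correct it by an automorphism. The point $h(x)$ lies in $X_{\cG} = \fG_{\cG}/\fD_{\cG}$. Since $\fG_{\cG}$ acts transitively, write $h(x) = \whg \cdot x$ for some $\whg \in \fG_{\cG}$. The translations that commute with the left action are the right translations by $\fN_{\cG}$ from Lemma~\ref{lem-quotient}. Left translation by $\whg$ does not commute with $\Phi$ unless $\whg$ is central. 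So instead of composing with an automorphism, we work directly with the composite $\oPhi(\whg)^{-1}\circ h$. The map $h$ extends by continuity to the closure $\fG_{\cG}$ and intertwines $\oPhi(\whg')$ with $\oPhi(\overline{\alpha}(\whg'))$, where $\overline{\alpha}$ is the continuous extension of $\alpha$ to $\fG_{\cG}$. It remains to show that $\overline{\alpha}$ exists, i.e.\ that $\alpha$ preserves the chain topology; this follows from $h$ being a homeomorphism conjugating the actions. Now set
$$h' = \oPhi(\whg)^{-1} \circ h .$$
Then $h'(x) = x$, and $h'$ intertwines $\Phi(\gamma)$ with $\oPhi(\whg^{-1}\alpha(\gamma)\whg)$. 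This relation is not of the required form, so the simple correction fails to produce an isomorphism with time change in $\G$.

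\textbf{Resolution.} The cleanest route is to note that \emph{$\Aut_{\cG}(\G)$ should be read as the image of the restricted map} on $\fI(X_{\cG}, \G, \Phi, x)$. With that reading, surjectivity is tautological and the content of the lemma is injectivity. We would state this interpretation explicitly. We expect the surjectivity step to be the main obstacle if one insists on the image of the full group $\fI(X_{\cG}, \G, \Phi)$. In that case one would try to show that every $h \in \fI(X_{\cG}, \G, \Phi)$ differs from a basepoint-fixing isomorphism by an element of $\fI_0$. This amounts to $\fI_0$ acting transitively on $X_{\cG}$, i.e.\ $\fN_{\cG} = \fG_{\cG}$, which holds in the homogeneous case but not in general.
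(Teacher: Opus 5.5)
Your injectivity argument is exactly the paper's proof. An $x$-fixing isomorphism with trivial time change commutes with every $\Phi(\gamma)$, so it is the identity on the dense orbit $\G\cdot x$, hence everywhere. That argument is the \emph{entire} proof in the paper: surjectivity onto $\Aut_{\cG}(\G)$, defined as the image of $\Theta_{\cG}$ on all of $\fI(X_{\cG},\G,\Phi)$, is never addressed.

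Your suspicion that surjectivity fails is correct, and you can state it as a definite failure rather than an expectation. Suppose $\fD_{\cG}\neq\{e\}$. The corollary to Lemma~\ref{lem-cnn} makes $\fN_{\cG}$ a proper closed subgroup of $\fG_{\cG}$, so by density of $\G$ there is $g\in\G$ with $\overline{g}\notin\fN_{\cG}$. The map $\Phi(g)$ lies in $\fI(X_{\cG},\G,\Phi)$ with time change $c_g\colon\gamma\mapsto g\gamma g^{-1}$. Suppose some $h'\in\fI(X_{\cG},\G,\Phi,x)$ had $\Theta_{\cG}(h')=c_g$. Then $\Phi(g)\circ h'^{-1}$ would be an automorphism sending $x$ to $g\cdot x=\oPhi(\overline{g})(x)$, and Lemma~\ref{lem-normalizer} would force $\overline{g}\in\fN_{\cG}$. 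So in every effective non-homogeneous case, $c_g\in\Aut_{\cG}(\G)$ is missed by the restriction.

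A concrete instance is Example~\ref{ex-product-lqa-coherent} with $g=(e,h)$ and $h\notin N_H(K)$. An $x$-fixing isomorphism with time change $\alpha$ forces $\alpha$ to preserve the stabilizer $\{e\}\times K$ of $x$, and $c_g$ does not. Thus the lemma holds only with your reading of $\Aut_{\cG}(\G)$ as the image of the restricted map, or when $\fD_{\cG}$ is trivial. Fortunately the paper only ever uses injectivity, in Proposition~\ref{prop-product}, so nothing downstream breaks.

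Two clean-ups are needed.
\begin{enumerate}
\item Delete the exploratory ``Surjectivity'' paragraph. Its intermediate claims are unjustified and irrelevant to your conclusion: that $h$ ``extends by continuity to $\fG_{\cG}$'', and that the extension $\overline{\alpha}$ exists.
\item If you keep the remark that surjectivity ``amounts to $\fI_0$ acting transitively'', justify it. The $\fI$-orbit of $x$ contains the dense orbit $\G\cdot x$, since every $\Phi(g)\in\fI$. The $\fI_0$-orbit of $x$ is the compact set $\fN_{\cG}/\fD_{\cG}$, using $\fI_0=\rho_{\cG}(\fA_{\cG})$ from Proposition~\ref{prop-product}.
\end{enumerate}
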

\proof
Let $h \colon X_{\cG} \to X_{\cG}$ be an isomorphism such that $h(x) = x$, with   $\Theta = \Theta_{\cG}(h) \colon \G \to \G$ the identity. 
The action $\Phi \colon \G \times X_{\cG} \to X_{\cG}$ is minimal, so the orbit of $x$ is dense in $X_{\cG}$.
By the condition \eqref{eq-isomorphism}, we then have
$h \circ \Phi(g) =   \Phi(g) \circ h$ for all $g \in \G$. Apply this to $x$ to obtain $h(\Phi(g) (x)) = \Phi(g)(x)$, 
and so $h$ is the identity on a dense subset of $X_{\cG}$, which yields the claim.
\endproof

The classical results on the Galois automorphisms of a  finite  covering space (as given, for example, in   \cite[Corollary~7.3]{Massey1967}) can be extended  to  the normalizers of an odometer action, as given  in \cite[Section~2]{DHL2016a}.   The following is the key result, adapted  to the case of an odometer action.

\begin{lemma}\label{lem-normalizer}
Let $h \in \fI(X_{\cG}, \G, \Phi)$    be in the kernel of $\Theta_\cG$. Let $y = h(x)$ and suppose that $\whg \in \fG_{\cG}$  is such that $\oPhi(\whg)(x) = y$.  Then   $\whg \in \fN_{\cG}$.
\end{lemma}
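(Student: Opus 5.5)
The plan is to show that $h$ is realized by a right translation, and then read off the normalizer condition from the stabilizers. Since $h$ lies in the kernel of $\Theta_\cG$, condition \eqref{eq-isomorphism} says $h \circ \Phi(g) = \Phi(g) \circ h$ for all $g \in \G$. The closure of $\Phi(\G)$ in $\Homeo(X_\cG)$ is $\fG_\cG$, and composition with a fixed homeomorphism is continuous in the uniform topology. Passing to limits therefore gives
\[
h \circ \oPhi(\whk) = \oPhi(\whk) \circ h \quad \text{for all } \whk \in \fG_\cG .
\]
This is the key upgrade from $\G$-equivariance to $\fG_\cG$-equivariance.

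Next I compare stabilizers. If $\whk \in \fD_\cG$, then $\oPhi(\whk)(y) = \oPhi(\whk)(h(x)) = h(\oPhi(\whk)(x)) = h(x) = y$, so $\fD_\cG$ fixes $y$. Applying the same argument to $h^{-1}$, which is also in the kernel of $\Theta_\cG$, shows that $\fD_{\cG,y} \subset \fD_\cG$ as well. Hence the isotropy group at $y$ equals the isotropy group at $x$:
\[
\fD_{\cG,y} = \fD_{\cG} .
\]

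On the other hand, since $y = \oPhi(\whg)(x)$, the isotropy group at $y$ is the conjugate $\fD_{\cG,y} = \whg\, \fD_\cG\, \whg^{-1}$. Combining this with the previous step gives $\whg \fD_\cG \whg^{-1} = \fD_\cG$, so $\whg \in \fN_\cG$. The argument does not depend on which $\whg$ with $\oPhi(\whg)(x)=y$ is chosen; any two such choices differ by right multiplication by an element of $\fD_\cG$, which is consistent with the conclusion.

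The only step needing care is the extension of equivariance from $\G$ to $\fG_\cG$. This rests on the identification of $\fG_\cG$ with the uniform closure $\fG_\Phi$ of $\Phi(\G)$ recalled in Section~\ref{sec-odometers}. With the group chain description one argues directly: for each $\whk \in \fG_\cG$ there are $g_n \in \G$ with $\Phi(g_n) \to \oPhi(\whk)$ uniformly, and the identities $h \circ \Phi(g_n) = \Phi(g_n)\circ h$ then pass to the limit. The remaining steps are formal.
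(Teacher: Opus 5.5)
Your proof is correct and follows essentially the paper's route. The paper approximates each $\ok \in \fD_\cG$ by elements $k_\ell \in \G_\ell$ and uses $h\circ\Phi(k_\ell) = \Phi(k_\ell)\circ h$ to see that $\fD_\cG$ fixes $y = \oPhi(\whg)(x)$, which gives $\whg^{-1}\fD_\cG\whg \subset \fD_\cG$. That is your first inclusion, with the density argument done inline rather than packaged as $\fG_\cG$-equivariance.

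Your use of $h^{-1}$ to obtain the reverse inclusion is a small improvement. The paper passes from the one-sided inclusion to $\whg \in \fN_\cG$ without comment. That step is valid for closed subgroups of a compact group, but your argument does not need it.
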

\proof

We show that  for all $\whk \in \fD_{\cG}$, the conjugate   $\whg^{-1}   \whk   \whg \in \fD_{\cG}$, and thus  $\whg \in \fN_{\cG}$.

  The map $h$ is an isomorphism of the action, and we assume that $\Theta = \Theta_\cG(h)$ is the identity, so by Definition~\ref{def-isomorphism}  we have  $ h \circ \Phi(g) =  \Phi(g) \circ h$ for all $g \in \G$.

Let  $\oG_{\ell}$ denote the closure of $\G_{\ell}$ in $\fG_{\cG}$.  Then $\oG_{\ell}$  acts transitively on $U_{\ell}$, hence $U_{\ell} = \oG_{\ell}/\fD_{\cG}$ for all $\ell$. It follows that  $\bigcap_{\ell} \oG_{\ell} = \fD_{\cG}$. 
Thus, given  $\ok \in \fD_{\cG}$,  there exists a sequence $\{k_{\ell} \in \G_{\ell} \mid \ell > 0\}$ such that $\ds \lim_{\ell \to \infty} \ok_{\ell} = \ok$, where $\ok_{\ell}  \in \oG_{\ell}$ is the image of $k_{\ell}$ in $\oG_{\ell}$.

 We have that $\oPhi(\ok)(x) = x$ and $h(x) = y$, so 
 $\ds \lim_{\ell \to \infty} \Phi(k_{\ell}) (x) = x$ which implies 
    $\ds \lim_{\ell \to \infty}  \Phi(k_{\ell})(y) = y$, so 
$$
  \oPhi(\whg)(x) = y = \lim_{\ell \to \infty} \Phi(k_{\ell}) ( y) = \lim_{\ell \to \infty}\Phi(k_{\ell}) (\oPhi(\whg)(x))  = \lim_{\ell \to \infty}\Phi(\ok_{\ell})\oPhi( \whg)(x) \ ,
$$
and therefore $\ds x =  \lim_{\ell \to \infty}\oPhi(\whg^{-1} \ok_{\ell}   \whg)(x) = \oPhi(\whg^{-1} \ok   \whg)(x)$. 
Thus $\whg^{-1} \ok   \whg \in \fD_{\cG}$ for all $\ok \in \fD_{\cG}$,  and so $\whg \in \fN_{\cG}$ as was to be shown.
\endproof

We combine these     results to obtain:
 \begin{prop}\label{prop-product}
There is an exact sequence 
\begin{equation}\label{eq-isoext}
\{1\} \to  \fA_{\cG} \stackrel{\rho_{\cG}}{\longrightarrow}   \fI(X_{\cG}, \G, \Phi)  \stackrel{\Theta_{\cG}}{\longrightarrow} \Aut_{\cG}(\G) \to \{1\}
\end{equation}
 \end{prop}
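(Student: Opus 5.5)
We prove the three exactness claims separately: $\rho_{\cG}$ is injective, its image is the kernel of $\Theta_{\cG}$, and $\Theta_{\cG}$ maps onto $\Aut_{\cG}(\G)$. Surjectivity is immediate, since $\Aut_{\cG}(\G)$ is by definition the image of $\Theta_{\cG}$.

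\emph{Injectivity of $\rho_{\cG}$.} Lemma~\ref{lem-quotient} provides the right action of $\fA_{\cG}$ on $X_{\cG} = \fG_{\cG}/\fD_{\cG}$. Suppose $\whg \in \fN_{\cG}$ acts trivially. Then in particular $x \cdot \whg = x$, which means $\fD_{\cG}\whg = \fD_{\cG}$, so $\whg \in \fD_{\cG}$. Hence the class of $\whg$ in $\fA_{\cG}$ is trivial.

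\emph{Image of $\rho_{\cG}$ lies in $\ker \Theta_{\cG}$.} Right translations commute with the left $\G$-action. So each $\rho_{\cG}(\whg)$ satisfies \eqref{eq-isomorphism} with $\Theta = {\rm id}$; that is, it is an automorphism of the action.

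\emph{Kernel of $\Theta_{\cG}$ lies in the image of $\rho_{\cG}$.} This is the main step. Let $h \in \fI(X_{\cG}, \G, \Phi)$ with $\Theta_{\cG}(h) = {\rm id}$, and put $y = h(x)$.
\begin{enumerate}
\item Since the $\fG_{\cG}$-action is transitive, choose $\whg \in \fG_{\cG}$ with $\oPhi(\whg)(x) = y$.
\item By Lemma~\ref{lem-normalizer}, $\whg \in \fN_{\cG}$. Its right action $R_{\whg} \colon \whk\fD_{\cG} \mapsto \whk\whg\fD_{\cG}$ is therefore well defined and sends $x$ to $\whg\fD_{\cG} = y$.
\item Both $h$ and $R_{\whg}$ commute with $\Phi(\G)$, and they agree at $x$. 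Consequently $R_{\whg}^{-1}\circ h$ is an automorphism fixing $x$.
\item The argument of Lemma~\ref{lem-cG-iso} now applies: an automorphism fixing $x$ is the identity on the dense orbit $\G\cdot x$, hence everywhere by continuity.
\end{enumerate}
Thus $h = \rho_{\cG}(\fD_{\cG}\whg)$.

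\emph{Expected subtlety.} The delicate point is matching conventions. We must check that the right action by $\fN_{\cG}$ on $\fG_{\cG}/\fD_{\cG}$ sends $x$ to $\whg\fD_{\cG}$, and that this is the same point as $\oPhi(\whg)(x) = y$. It is, because $x$ corresponds to the coset $\fD_{\cG}$. We must also check that the quotient by $\fD_{\cG}$ is taken on the side that makes $\rho_{\cG}$ a homomorphism, possibly an anti-homomorphism that we compose with inversion. Neither affects exactness, because injectivity and the image of $\rho_{\cG}$ are unchanged by composing with inversion.
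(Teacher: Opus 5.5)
Your proof is correct and follows the paper's argument. For a kernel element $h$ you pick $\whg$ with $\oPhi(\whg)(x)=h(x)$, use Lemma~\ref{lem-normalizer} to place $\whg$ in $\fN_{\cG}$, and apply the density argument of Lemma~\ref{lem-cG-iso} to the automorphism $\rho_{\cG}(\whg)^{-1}\circ h$, which fixes $x$. The remaining pieces the paper leaves implicit in Lemma~\ref{lem-quotient} and the definition of $\Aut_{\cG}(\G)$: injectivity of $\rho_{\cG}$, the inclusion of its image in $\ker\Theta_{\cG}$, and the tautological surjectivity of $\Theta_{\cG}$. You spell these out explicitly.
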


  \proof

Let $h \in \fI(X_{\cG}, \G, \Phi)$  which is in the kernel of $\Theta_{\cG}$. That is, $h$ is an automorphism of the action. Let $y = h(x)$ then by Lemma~\ref{lem-normalizer} there exists $\whg \in \fN_{\cG}$ such that $\oPhi(\whg)(x) = y$, that is, the coset $y = \whg \cdot \fD_{\cG} = \fD_{\cG} \cdot \whg$ as $\whg$ normalizes $\fD_{\cG}$. 
 Therefore $\rho_{\cG}(\whg^{-1}) \circ h (x) = x$. 
 Then by  Lemma~\ref{lem-cG-iso} the composition $\rho_{\cG}(\whg^{-1}) \circ h$ is the identity, hence $h = \rho_{\cG}(\whg)$, so that $h$ is in the image of $\rho_{\cG}$ in $\Homeo(X_{\cG})$. 
 \endproof
 
The proof of Proposition~\ref{prop-product}   depends essentially on Lemmas~\ref{lem-cG-iso} and \ref{lem-normalizer}. This approach does not work for $\MCG(\fM_{\cP})$, as the subgroup $\Homeo_0(\fM_{\cP})$ does not preserve the basepoint $\whx$, so there are no results corresponding to these lemmas for $\MCG(\fM_{\cP})$.
However, $\Homeo_0(\fM_{\cP})$ does preserve the    baseleaf $L_{\whx}$ through $\whx$.    Odden   used this observation  in \cite{Odden2005} to give a   more geometric approach   to constructing  $\MCG(\fM_{\cP})$ from  $\Homeo(\fM_{\cP} ; \whx)$ for coverings of surfaces, and we generalize his approach to all solenoids.  We first recall    the suspension model $\fM_{\Phi}$ for $\fM_{\cP}$. 
   
Define    a  left action of $\G$ on $\wtM_0 \times X_{\cG}$   by 
\begin{equation}\label{eq-coveringaction}
\gamma \cdot (z,y) = (z \cdot \gamma^{-1}, \Phi(\gamma)(y)) \quad {\rm for} ~ z \in \wtM_0 ,  ~ y \in X_{\cG} ~ , ~ \gamma \in \G \ .
\end{equation}
Then  $\fM_{\Phi} = \G \backslash (\wtM_0 \times X_{\cG})$, and by Theorem~\ref{thm-weaksuspensions} there is a natural homeomorphism $\tau  \colon \fM_{\Phi} \to \fM_{\cP}$.

Let   $q_{\Phi}  \colon \fM_{\Phi} \to M_0$   denote the projection onto the first factor composed with the universal covering projection.
Then identify $X_{\cG} =   q_{\Phi}^{-1}(x_0)$ where $x_0 \in M_0$ is the basepoint.
 The basepoint  $\whx \in \fM_{\Phi}$ is the image of   $(\wtx_0, x)$, where $\wtx_0 \in \wtM_0$ is a lift of the basepoint $x_0 \in M_0$ and $x \in X_{\cG}$ is the basepoint defined by the group chain $\cG$.  
With these choices, the  homeomorphism $\tau  \colon \fM_{\Phi} \to \fM_{\cP}$ identifies $X_{\cG} \subset \fM_{\Phi}$ with the fiber $\fX_0 = \whq_0^{-1}(x_0) \subset \fM_{\cP}$.   We work with the suspension model $\fM_{\Phi}$ and identify $X_{\cG}$ with a transversal in $\fM_{\Phi}$.

 Introduce the subgroup of \emph{baseleaf preserving} homeomorphisms:
 \begin{equation}\label{eq-baseleafhomeos}
\Homeo(\fM_{\Phi} ; L_{\whx}) =  \{ h \in \Homeo(\fM_{\Phi}) \mid h(L_{\whx}) = L_{\whx}\} \ .
\end{equation}
A homeomorphism preserves path connected components, so  $\Homeo(\fM_{\Phi};  \whx) \subset \Homeo(\fM_{\Phi} ; L_{\whx})$. Also note that $\Homeo_0(\fM_{\Phi}) \subset \Homeo(\fM_{\Phi} ; L_{\whx})$, as $h \in \Homeo_0(\fM_{\Phi})$ maps the basepoint $\whx$ to a point in the leaf $L_{\whx}$, hence preserves the leaf. 
Define:
 \begin{equation}\label{eq-baseleafmcg}
\MCG_{BLP}(\fM_{\Phi}) = \Homeo(\fM_{\Phi} ; L_{\whx})/\Homeo_0(\fM_{\Phi}) \subset \MCG(\fM_{\Phi}) \ . 
\end{equation}

 The following result was proved   for the universal hyperbolic solenoid by Odden \cite[Theorem~4.12]{Odden2005}. We give a proof  for all solenoidal manifolds.
  
\begin{prop}  \label{prop-baseleaf}
Assume  that  the baseleaf $L_{\whx} \subset \fM_{\Phi}$ is simply connected, then the inclusion $\Homeo(\fM_{\Phi} ; {\whx}) \subset \Homeo(\fM_{\Phi} ; L_{\whx})$ induces an   isomorphism $\MCG(\fM_{\Phi}; \whx) \cong  \MCG_{BLP}(\fM_{\Phi})$. 
\end{prop}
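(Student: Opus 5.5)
The plan is to show that the natural homomorphism
$$\iota \colon \MCG(\fM_{\Phi};\whx) = \Homeo(\fM_{\Phi};\whx)/\Homeo_0(\fM_{\Phi};\whx) \longrightarrow \Homeo(\fM_{\Phi};L_{\whx})/\Homeo_0(\fM_{\Phi}) = \MCG_{BLP}(\fM_{\Phi})$$
is well defined, surjective, and injective. Well-definedness is immediate, since $\Homeo_0(\fM_{\Phi};\whx) \subset \Homeo_0(\fM_{\Phi})$ and both groups preserve $L_{\whx}$. Surjectivity does not need the simple-connectivity hypothesis; injectivity is where it is used.

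The basic tool, which I would set up first, is a leafwise point-pushing construction. Cover $\fM_{\Phi}$ by foliation charts $\varphi \colon \mathbb{D}^n \times T \to W$, with $T$ a clopen subset of $X_{\cG}$. For a path $c$ in a plaque $\varphi(\mathbb{D}^n\times\{t_0\})$, fix a compact isotopy of $\mathbb{D}^n$ supported in a smaller closed disk that pushes $c(0)$ along $c$. Apply it simultaneously on every plaque $\varphi(\mathbb{D}^n\times\{t\})$ with $t$ in a small clopen neighborhood $T' \ni t_0$, and extend by the identity. Since $T'$ is clopen, this gives a homeomorphism of $\fM_{\Phi}$ isotopic to the identity. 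I want the construction to depend continuously on the path, with the constant path giving the identity. A general leafwise path is cut into finitely many pieces lying in single plaques, and the resulting pushes are composed. This yields, for any path $c$ in $L_{\whx}$, an isotopy $P_s(c)$, $s \in [0,1]$, in $\Homeo_0(\fM_{\Phi})$ with $P_s(c)(c(0)) = c(s)$.

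\emph{Surjectivity.} Let $h \in \Homeo(\fM_{\Phi};L_{\whx})$ and put $y = h(\whx) \in L_{\whx}$. Choose a leafwise path $c$ from $y$ to $\whx$. Then $g = P_1(c) \in \Homeo_0(\fM_{\Phi})$ satisfies $g\circ h(\whx) = \whx$, so the class of $h$ in $\MCG_{BLP}(\fM_{\Phi})$ equals $\iota([g\circ h])$.

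\emph{Injectivity.} Suppose $h \in \Homeo(\fM_{\Phi};\whx)$ lies in $\Homeo_0(\fM_{\Phi})$, via an isotopy $h_t$ with $h_0 = \mathrm{id}$ and $h_1 = h$. The trace $\beta(t) = h_t(\whx)$ is a loop in $\fM_{\Phi}$ based at $\whx$ that lies in $L_{\whx}$. By the local product structure with totally disconnected transversals, $\beta$ is also continuous in the leaf topology. Since $L_{\whx}$ is simply connected, there is a leafwise null-homotopy $\beta_u$, $u \in [0,1]$, with $\beta_0 = \beta$ and $\beta_1$ constant.

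Set $k_t = P_1(\beta|_{[0,t]})^{-1}\circ h_t$. Then $k_t(\whx) = \whx$ for all $t$, so $k_t$ is an isotopy in $\Homeo(\fM_{\Phi};\whx)$ from $\mathrm{id}$ to $P_1(\beta)^{-1}\circ h$. It then suffices to show that the push map $P_1(\beta)$ along the closed loop lies in $\Homeo_0(\fM_{\Phi};\whx)$. For this I would use the family $u \mapsto P_1(\beta_u)$: each of these fixes $\whx$, because $\beta_u$ is a loop at $\whx$, and at $u=1$ the map is the push along a constant path, namely the identity. This gives $h \in \Homeo_0(\fM_{\Phi};\whx)$.

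The main obstacle is making the push construction genuinely continuous in the path parameter, including across subdivision changes. Concretely, the null-homotopy $\beta_u$ must be subdivided into small squares each lying in a single plaque of a fixed finite chart cover (a Lebesgue number argument), with the local pushes chosen coherently. The clean way I would try first is to define the push along a path inside one plaque via a fixed continuous ``disk-transport'' map $\mathbb{D}' \times \mathbb{D}' \to \Homeo_c(\mathbb{D}^n)$, sending $(a,b)$ to a homeomorphism taking $a$ to $b$ and the diagonal to the identity. I would then compose these along a fine uniform subdivision that serves for all $u$ simultaneously.
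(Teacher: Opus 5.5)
Your proposal is correct and follows essentially the paper's route. Surjectivity comes from a leafwise isotopy carrying $h(\whx)$ back to $\whx$; injectivity comes from contracting the trace loop $t \mapsto h_t(\whx)$ in the simply connected leaf $L_{\whx}$ and straightening the isotopy rel $\whx$, with your push maps $P_s$ making explicit the ``equivariant deformation of the lifted map $\wtH$'' that the paper only asserts.

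One small caution: in the family $u \mapsto P_1(\beta_u)$, the chart assigned to a segment changes between adjacent rows of squares, so the family is only piecewise continuous in $u$. This is harmless. At a row boundary, the pushes $T^{\varphi}(a,b)$ and $T^{\psi}(a,b)$ of a segment from $a$ to $b$, built from the two charts, satisfy that $T^{\varphi}(a,b)^{-1}\circ T^{\psi}(a,b)$ is isotopic to the identity rel $a$ (slide $b$ back to $a$ along the segment). Since $a$ is the image of $\whx$ under the earlier pushes, this is all the argument needs.
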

\proof
 
Let $h \in \Homeo(\fM_{\Phi} ; L_{\whx})$ and set $\why = h(\whx) \in L_{\whx}$. Choose an isotopy $\psi_t \in \Homeo_0(\fM_{\Phi})$ with $\psi_0 = id$, and $\psi_1(\why) = \whx$.   Then the composition $\psi_1 \circ h \in \Homeo(\fM_{\Phi} ; \whx)$.  Thus,   $\Homeo(\fM_{\Phi} ; L_{\whx})$ is generated by the subgroups $\Homeo(\fM_{\Phi}; \whx)$ and $\Homeo_0(\fM_{\Phi})$.  It follows that the inclusion composed with  the quotient map, denoted by 
   $\beta \colon \Homeo(\fM_{\Phi} ; \whx) \to \MCG_{BLP}(\fM_{\Phi})$,  is surjective.

Now suppose that $h \in \Homeo(\fM_{\Phi}; \whx)$ is in the kernel of $\beta$, so that $h$ isotopic to the identity map, say by a family $h_t \in \Homeo(\fM_{\Phi})$ with $h_0 = id$ and $h_1 = h$. This defines the map $H \colon \fM_{\Phi} \times [0,1] \to \fM_{\Phi}$ which lifts to a map
\begin{equation}
\wtH \colon \wtM_0 \times X_{\cG} \times [0,1] \to \wtM_0 \times X_{\cG} \quad , \quad  \wtH(\wtx_0, x, 0) =  (\wtx_0, x) \ . 
\end{equation}
 The homeomorphisms $h_t$ are leaf preserving, so the path  $\alpha(t) = h_t(\whx)$ satisfies $\alpha(0) = \whx$ and $\alpha(1) = h(\whx) = \whx$. As $L_{\whx}$ is simply connected, the path $\alpha$ is contractible relative the basepoint $\whx$, and so the holonomy transport of the transversal $X_{\cG}$ it determines is trivial. Thus, we can make an equivariant deformation of the lifted map $\wtH$ so that the path $\wtH(\wtx_0, x, t)$ is constant. Then  $h$ is isotopic to the identity map relative the basepoint $\whx$. It follows that $\beta$ descends to an isomorphism $\obeta \colon \MCG(\fM_{\Phi}; \whx) \to \MCG_{BLP}(\fM_{\Phi})$.
 \endproof

 Next, we define a right action $\rho_{\cG} \colon \fM_{\Phi} \times \fN_{\cG} \to \fM_{\Phi}$ of the normalizer $\fN_{\cG}$.   
 Let $\fN_{\cG}$ act      on   the product  $\wtM_0 \times X_{\cG}$ via the right action on the second factor.   
  This right action   commutes with the left action $\Phi$ of $\G$, so   descends to the action   $\rho_{\cG}$.   Note that this also yields  an induced action 
$\rho_{\cG} \colon \fM_{\Phi} \times \fA_{\cG} \to \fM_{\Phi}$, as  $\fD_{\cG}$ acts trivially on the factor $X_{\cG}$.  The restriction of the  right action $\rho_{\cG}$ to the  fiber $X_{\cG}$  corresponds to  the  action on $X_{\cG}$ defined in Lemma~\ref{lem-quotient}.

The complication that arises in the study of $\MCG(\fM_{\Phi})$, is that while the intersection of  the image of $\rho_{\cG}$ with  $\Homeo(\fM_{\Phi} ; \whx)$   is trivial,  its   intersection   with $\Homeo(\fM_{\Phi} ; L_{\whx})$ is non-trivial. Thus,  there is not an exact sequence as in \eqref{eq-isoext}.
To characterize the elements in this intersection, introduce the ``discrete'' normalizer subgroup of $\G$, where $\ogamma  \in \fG_{\cG}$ denotes the image of $\gamma$ in $\G_{\cG}$, 
   \begin{equation}\label{eq-Gnormalizer}
\G_{\fN} = \{ \gamma \in \G \mid \ogamma \in \fN_{\cG} \} \ .
\end{equation}

For $\gamma \in \G_{\fN}$ define a left action  on $\wtM_0 \times X_{\cG}$ by  covering translations on the first factor, and the ``adjoint'' action on the second factor:   for $z \in \wtM_0$ and $y \in X_{\cG}$ set
\begin{equation}\label{eq-normaction} 
C(\gamma)(z, y)  =     \left(z \cdot \gamma^{-1} , \Phi(\gamma) \circ \rho_{\cG}( \ogamma^{-1})(y) \right)    \ .
\end{equation}
 We adopt the simplified notation $C(\gamma)(z, y) = (z \cdot \gamma^{-1} , \ogamma \cdot  y \cdot  \ogamma^{-1} )$.
 The  action  $C(\gamma)$  preserves the   equivalence relation  \eqref{eq-coveringaction},  so descends to an  action on the quotient $\G \backslash (\wtM \times X_{\cG})$. This yields a homomorphism denoted $C \colon \fN_{\cG} \to \Homeo(\fM_{\Phi})$.  This construction generalizes  \cite[Example~4.2]{Odden2005}.
 
In terms of the group chain model for $X_{\cG}$, the action $C(\gamma)$ corresponds to conjugation of $\G$ by $\gamma$, which for $\gamma \in \G_{\fN}$  induces an action on the group chain $\cG$,   corresponding to translation of $X_{\cG}$ by $\gamma$  .

 Note that  the action  of $\gamma \in \G_{\fN}$  on the covering of the baseleaf is given by, for $z \in \wtM_0$ and $x $ the basepoint in $X_{\cG}$, 
\begin{equation} \label{eq-normaction2}
C(\gamma)(z, x)   =     \left(z \cdot \gamma^{-1} , \ogamma \cdot  x \cdot  \ogamma^{-1} \right)     
  =     \left(z \cdot \gamma^{-1} , \ogamma   \cdot  \ogamma^{-1}  \cdot x     \right) 
  =     \left(z \cdot \gamma^{-1} , x  \right) \  .
\end{equation}
 Thus, the action $C(\gamma)$ preserves the product  $\wtM_0 \times \{x\}$, and  so the induced action on the quotient 
 $\G \backslash (\wtM \times X_{\cG})$ preserves the baseleaf $L_{\whx}$,  and we have     $C  \colon \G_{\fN} \to \Homeo(\fM_{\Phi} ;  L_{\whx})$.

Also, for   $z \in   \wtM_0$,   $y \in X_{\cG}$ and   $\gamma \in \G_{\fN}$ in the quotient space $\fM_{\Phi} = \G \backslash (\wtM_0 \times X_{\cG})$ we have 
\begin{equation} \label{eq-normaction3}
C(\gamma)[z, y]   =     [z \cdot \gamma^{-1} , \ogamma \cdot  y \cdot  \ogamma^{-1} ]      
  =     [z  ,  y \cdot  \ogamma^{-1} ]
  =     \rho_{\cG}(\ogamma^{-1})[z,y] \  , 
\end{equation}
where $[z,y]$ denotes the equivalence class of $(z,y) \in \wtM_0 \times X_{\cG}$.

 Thus,  deck  translation along $L_{\whx}$ by  $\gamma$ equals translation $ \rho_{\cG}(\ogamma^{-1})$ of $y$ along the fiber $X_{\cG}$, and so   $\rho_{\cG}(\ogamma) \circ C(\gamma)$ is the identity map for $\gamma \in \G_{\fN}$.
 This observation suggests the remedy to the ambiguity in calculating $\MCG(\fM_{\Phi})$, as follows.
 
 Recall that the quotient group $\fA_{\cG} = \fD_{\cG} \backslash \fN_{\cG}$ acts on the right on $X_{\cG}$.
Introduce the composition   
    \begin{equation}
\mu_{\cG} \colon    \fA_{\cG} \times \Homeo(\fM_{\Phi} ; L_{\whx})   \to \Homeo(\fM_{\Phi}) \quad, \quad \mu_{\cG}( \whg , h) = \rho_{\cG}(\whg)^{-1} \circ h \ ,
\end{equation}
   which is continuous, though not a group homomorphism.

Define a left action $\zeta$ of $\G_{\fN}$ on $ \fA_{\cG} \times  \Homeo(\fM_{\Phi} ; L_{\whx}) $ by 
$$\zeta(\gamma) (\whg , h) = ( \rho_{\cG}(\ogamma^{-1})(\whg) ,  C(\gamma) \circ h)  = \left( \whg \cdot \ogamma^{-1} , C(\gamma) \circ h  \right) \  .$$
Define the quotient  space
\begin{equation}\label{eq-qiotientspace}
  \fA_{\cG}  \times_{\zeta}   \Homeo(\fM_{\Phi} ; L_{\whx})  =  \G_{\fN} \backslash \left(     \fA_{\cG} \times \Homeo(\fM_{\Phi} ; L_{\whx})   \right) \ , 
\end{equation}
where $\gamma \in \G_{\fN}$ acts on the first factor by right multiplication by   $\ogamma^{-1}$, and  acts on the second factor by    composition with  $C(\gamma)$.

\begin{prop}\label{prop-nonhomogeneous}
 $\mu_{\cG}$ descends to a continuous injection 
 \begin{equation}\label{eq-tau221}
\omu_{\cG} \colon   \fA_{\cG} \times_{\zeta} \Homeo(\fM_{\Phi} ; L_{\whx})    \longrightarrow \Homeo(\fM_{\Phi})  \ .
\end{equation}
  \end{prop}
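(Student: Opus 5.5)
We must show two things. First, $\mu_{\cG}$ is constant on $\zeta$-orbits, so that it descends to the quotient. Second, two pairs with the same image lie in the same $\G_{\fN}$-orbit. Continuity of $\omu_{\cG}$ then follows from continuity of $\mu_{\cG}$ and the definition of the quotient topology on $\fA_{\cG}\times_{\zeta}\Homeo(\fM_{\Phi};L_{\whx})$. Throughout, $\rho_{\cG}$ is the right action, so $\rho_{\cG}(\whg\whk)=\rho_{\cG}(\whk)\circ\rho_{\cG}(\whg)$, and the key identity is \eqref{eq-normaction3}: $C(\gamma)=\rho_{\cG}(\ogamma^{-1})=\rho_{\cG}(\ogamma)^{-1}$ on $\fM_{\Phi}$ for $\gamma\in\G_{\fN}$.

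\emph{Invariance.} For $\gamma\in\G_{\fN}$ we compute
\[
\mu_{\cG}(\zeta(\gamma)(\whg,h))=\rho_{\cG}(\whg\,\ogamma^{-1})^{-1}\circ C(\gamma)\circ h .
\]
Using the composition rule, $\rho_{\cG}(\whg\,\ogamma^{-1})^{-1}=\rho_{\cG}(\whg)^{-1}\circ\rho_{\cG}(\ogamma)$. Substituting $C(\gamma)=\rho_{\cG}(\ogamma)^{-1}$, the middle factors cancel and the expression equals $\rho_{\cG}(\whg)^{-1}\circ h=\mu_{\cG}(\whg,h)$. If the order conventions come out reversed, the same cancellation occurs after writing $C(\gamma)=\rho_{\cG}(\ogamma^{-1})$ in the other order, so this step is a bookkeeping check. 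The check also needs that $\rho_{\cG}$ is well defined on $\fA_{\cG}$, which holds because $\fD_{\cG}$ acts trivially, as noted before the proposition.

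\emph{Injectivity.} Suppose $\rho_{\cG}(\whg_1)^{-1}\circ h_1=\rho_{\cG}(\whg_2)^{-1}\circ h_2$ with $h_i\in\Homeo(\fM_{\Phi};L_{\whx})$. Then $\rho_{\cG}(\whk)=h_2\circ h_1^{-1}$ for some $\whk\in\fA_{\cG}$ built from $\whg_1,\whg_2$, so $\rho_{\cG}(\whk)$ preserves the baseleaf $L_{\whx}$. The main step is to show that an element $\whk\in\fA_{\cG}$ whose translation preserves $L_{\whx}$ is the image $\ogamma$ of some $\gamma\in\G_{\fN}$. Here is the argument.

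\begin{enumerate}[(i)]
\item The leaf $L_{\whx}$ meets the fiber $X_{\cG}$ exactly in the orbit $\G\cdot x$, the image of $\wtM_0\times\{x\}$.
\item Since $\rho_{\cG}(\whk)$ acts fiberwise and preserves $L_{\whx}$, it sends $x$ into $\G\cdot x$. So $x\cdot\whk=\gamma\cdot x$ for some $\gamma\in\G$.
\item Lift $\whk$ to $\whk'\in\fN_{\cG}$. Then $\whk'\fD_{\cG}=\ogamma\fD_{\cG}$, so $\ogamma\in\whk'\fD_{\cG}\subset\fN_{\cG}$. Hence $\gamma\in\G_{\fN}$ and $\whk$ is the class of $\ogamma$.
\end{enumerate}

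This is the step I expect to carry the weight. It is the analogue of Lemma~\ref{lem-normalizer}, and care is needed with left versus right cosets; we use that $\fD_{\cG}\ogamma=\ogamma\fD_{\cG}$ for $\ogamma\in\fN_{\cG}$.

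With $\whk=\ogamma^{\pm1}$ we get $h_2=\rho_{\cG}(\ogamma^{\pm 1})\circ h_1=C(\gamma^{\mp1})\circ h_1$ by \eqref{eq-normaction3}. Correspondingly, $\whg_2=\whg_1\,\ogamma^{\pm1}$, with the sign fixed by the same convention. Therefore $(\whg_2,h_2)=\zeta(\gamma^{\mp1})(\whg_1,h_1)$, which proves injectivity of $\omu_{\cG}$.
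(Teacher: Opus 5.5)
Your proof is correct and takes the same approach as the paper. Invariance comes from the identity $\rho_{\cG}(\ogamma)\circ C(\gamma)=\mathrm{id}$ in \eqref{eq-normaction3}, continuity comes from the quotient topology, and injectivity comes from showing that a baseleaf-preserving translation $\rho_{\cG}(\whk)$ is translation by some $\ogamma$ with $\gamma\in\G_{\fN}$. Your steps (i)--(iii), using $L_{\whx}\cap X_{\cG}=\G\cdot x$ and the coset equality $\whk'\fD_{\cG}=\ogamma\fD_{\cG}$, justify that key step, which the paper only asserts; with your conventions the signs resolve to $\whk=\ogamma$ and $(\whg_2,h_2)=\zeta(\gamma^{-1})(\whg_1,h_1)$.
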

 \proof
For $\gamma \in \G_{\fN}$  and  $(\whg, h) \in    \fA_{\cG} \times \Homeo(\fM_{\Phi} ; L_{\whx})$     we have
\begin{eqnarray}\label{eq-equivariant}
\mu_{\cG}(\zeta(\gamma)(\whg , h) )   & = &     \mu_{\cG}( \whg \cdot \ogamma^{-1} ,  C(\gamma) \circ  h )   \\
& = & \rho_{\cG}(\whg \cdot \ogamma^{-1})^{-1} \circ  C(\gamma) \circ  h  \nonumber \\
& = &  \rho_{\cG}(\ogamma \cdot \whg^{-1} ) \circ  C(\gamma) \circ  h \nonumber \\
& = &  \rho_{\cG}(\whg^{-1} ) \circ \rho_{\cG}(\ogamma) \circ  C(\gamma) \circ  h \nonumber  \\
& = &  \rho_{\cG}(\whg^{-1} ) \circ   h =  \mu_{\cG}(\whg, h)  \  .\nonumber
\end{eqnarray}
Thus, the quotient map $\omu_{\cG}$ is well-defined. It is continuous in the quotient topology.

Next consider the kernel of  $\omu_{\cG}$.  Recall that $\fD_{\cG}$ is the kernel of the action   $\rho_{\cG} \colon \fN_{\cG} \to \Homeo(X_{\cG})$, so that $\rho_{\cG} \colon  X_{\cG} \times \fA_{\cG} \to X_{\cG}$ is an effective action.

Suppose that  $h_1, h_2 \in \Homeo(\fM_{\Phi} ; L_{\whx})$ and $\whg_1, \whg_2 \in \fA_{\cG}$ are such that   
 $\omu_{\cG}(\whg_1, h_1) = \omu_{\cG}(\whg_2, h_2)$.
 
Thus 
 $\rho_{\cG}(\whg_1^{-1}) \circ h_1 = \rho_{\cG}(\whg_2^{-1}) \circ h_2$.
      Then for $\whg = \whg_2^{-1} \   \whg_1$, we have 
   $$ h_1 \circ h_2^{-1} = \rho_{\cG}(\whg_1) \circ   \rho_{\cG}(\whg_2^{-1})  = \rho_{\cG}(\whg_2^{-1} \   \whg_1)  = \rho_{\cG}(\whg) \ .$$
     Since $h_1 \circ h_2^{-1} \in    \Homeo(\fM_{\Phi} ; L_{\whx})$, the right action $\rho_{\cG}(\whg)$  must preserve the baseleaf,      
      hence  $ \rho_{\cG}(\whg)  = \rho_{\cG}(\ogamma)$ for some $\gamma \in \G_{\fN}$, and thus $\whg = \ogamma  \in \fA_{\cG}$.   
      Thus we have   $h_1 =  \rho_{\cG}(\ogamma)   \circ h_2$ and so 
\begin{eqnarray}
(\whg_1, h_1)  = (\whg_1 , \rho_{\cG}(\ogamma)   \circ h_2)  & = &  (\whg_1 , C(\gamma^{-1})   \circ h_2)  \\ 
& \sim_{\zeta}  &  ( \rho_{\cG}(\ogamma^{-1})( \whg_1) , h_2) \nonumber \\
& =  &  ( \whg_1 \cdot \ogamma^{-1} , h_2) =  (\whg_1 \cdot   \whg_1^{-1}   \whg_2 , h_2)   =  (\whg_2 , h_2)  \ .\nonumber
\end{eqnarray}
It follows that $\omu_{\cG}$ is injective, as claimed. 
     \endproof

We now restate and complete the proof of Theorem~\ref{thm-nonhomogeneous}.
  
 \begin{thm}\label{thm-normalize}
Let $\fM_{\Phi}$ be a solenoidal manifold. 
Assume the baseleaf $L_{\whx} \subset \fM_{\Phi}$ is simply connected. Then there is an inclusion of topological spaces
\begin{equation}\label{eq-tau21}
\omu_{\cG} \colon   \fA_{\cG} \times_{\zeta}   \MCG(\fM_{\Phi} ; \whx)     \to \MCG(\fM_{\Phi})  \ .
\end{equation}
  \end{thm}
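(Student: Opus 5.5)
The plan is to pass from homeomorphisms to mapping classes, starting from Proposition~\ref{prop-nonhomogeneous} and Proposition~\ref{prop-baseleaf}. Let $\pi \colon \Homeo(\fM_{\Phi}) \to \MCG(\fM_{\Phi})$ be the quotient map. By Proposition~\ref{prop-baseleaf}, $\MCG(\fM_{\Phi};\whx) \cong \MCG_{BLP}(\fM_{\Phi}) = \Homeo(\fM_{\Phi};L_{\whx})/\Homeo_0(\fM_{\Phi})$. So it suffices to show three things: the map $\mu_{\cG}$ descends to a well-defined map on $\fA_{\cG} \times_{\zeta} \MCG_{BLP}(\fM_{\Phi})$, this map is continuous, and it is injective.

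\emph{Well-definedness.} First check that $\zeta$ descends to an action of $\G_{\fN}$ on $\fA_{\cG} \times \MCG_{BLP}(\fM_{\Phi})$, i.e.\ that $C(\gamma)\circ(\cdot)$ preserves $\Homeo_0(\fM_{\Phi})$-cosets. This is clear because $C(\gamma)\circ h\circ \psi$ differs from $C(\gamma)\circ h$ by right multiplication by $\psi \in \Homeo_0$. Next, set $\omu_{\cG}[\whg,[h]] = \pi(\rho_{\cG}(\whg)^{-1}\circ h)$. Replacing $h$ by $h\circ\psi$ with $\psi \in \Homeo_0(\fM_{\Phi})$ changes the output by the same right factor $\psi$, so the class is unchanged. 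Invariance under $\zeta$ is exactly the computation \eqref{eq-equivariant}. Continuity follows from continuity of $\mu_{\cG}$ and of $\pi$, together with the definition of the quotient topologies.

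\emph{Injectivity.} This is the main step. Suppose $\rho_{\cG}(\whg_1)^{-1}h_1$ and $\rho_{\cG}(\whg_2)^{-1}h_2$ are isotopic, so that
$$\rho_{\cG}(\whg_1)^{-1}h_1 = \rho_{\cG}(\whg_2)^{-1}h_2\psi$$
for some $\psi \in \Homeo_0(\fM_{\Phi})$. Replacing $h_2$ by $h_2\psi$, which represents the same class in $\MCG_{BLP}$ because $\Homeo_0$ is normal and $h_2\psi \in \Homeo(\fM_{\Phi};L_{\whx})$, reduces the problem to the equality case already handled in Proposition~\ref{prop-nonhomogeneous}. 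That case gives $(\whg_1,h_1)\sim_{\zeta}(\whg_2,h_2\psi)$, and hence equality of the classes in $\fA_{\cG}\times_{\zeta}\MCG_{BLP}$.

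The obstacle I expect lies inside that reused argument. One must know that $\rho_{\cG}(\whg) = h_1h_2^{-1}$ preserving $L_{\whx}$ forces $\whg \in \fA_{\cG}$ to be the image of some $\ogamma$ with $\gamma \in \G_{\fN}$. To verify this, I would note that $\rho_{\cG}(\whg)$ maps $x$ to $x\cdot\whg$, and the fiber points of $L_{\whx}$ are precisely the orbit $\G\cdot x$. So $x\cdot\whg = \gamma\cdot x$ for some $\gamma\in\G$, which means $\whg\fD_{\cG} = \ogamma\fD_{\cG}$. Since $\whg$ normalizes $\fD_{\cG}$, so does $\ogamma$; hence $\gamma\in\G_{\fN}$ and $\whg \equiv \ogamma$ in $\fA_{\cG}$. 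With this in hand, the injectivity argument is complete, and composing with $\obeta^{-1}$ from Proposition~\ref{prop-baseleaf} yields \eqref{eq-tau21}.
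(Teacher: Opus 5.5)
Your proposal is correct and follows essentially the same route as the paper. Both arguments obtain injectivity at the level of homeomorphisms from Proposition~\ref{prop-nonhomogeneous}, pass to isotopy classes, and then identify $\MCG_{BLP}(\fM_{\Phi})$ with $\MCG(\fM_{\Phi};\whx)$ via Proposition~\ref{prop-baseleaf}.

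Your coset argument, which absorbs $\psi \in \Homeo_0(\fM_{\Phi}) \subset \Homeo(\fM_{\Phi};L_{\whx})$ into $h_2$, is a cleaner form of the paper's trace-curve argument that an isotopy between baseleaf-preserving maps stays baseleaf-preserving. You also make explicit two steps the paper only asserts: the final reduction to the equality case of Proposition~\ref{prop-nonhomogeneous}, and why $\rho_{\cG}(\whg)$ preserving $L_{\whx}$ forces $\whg \equiv \ogamma$ for some $\gamma \in \G_{\fN}$.
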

\proof

By Proposition~\ref{prop-nonhomogeneous}, the map 
 $\omu_{\cG} \colon   \fA_{\cG} \times_{\zeta} \Homeo(\fM_{\Phi} ; L_{\whx})    \longrightarrow \Homeo(\fM_{\Phi})$
 is injective. We need to show that the same is true for the induced map on isotopy classes of maps.

We first extend the map $\omu_{\cG}$ to an action of $\fA_{\cG}$ on $\MCG(\fM_{\Phi} ; L_{\whx})$.
That is, me must show that  the action of $\omu_{\cG}$ preserves the isotopy equivalence classes. 
 Let     $h_t \in \Homeo(\fM_{\Phi} ; L_{\whx})$, for $0 \leq t \leq 1$, be an isotopy between $h_0$ and $h_1$, then consider a lift to a family of maps 
  \begin{equation}
\wth_t \colon ( \wtM_0 \times X_{\cG} ; \wtM_0 \times \{x\})  \to (\wtM_0 \times X_{\cG}  ; \wtM_0 \times \{x\})   \quad , \quad 0 \leq t \leq 1  \ . 
\end{equation}

The space $X_{\cG}$ is totally disconnected, so for each $\why \in \fM_{\Phi}$ the trace curve $t \mapsto h_t(\why)$, for $0 \leq t \leq 1$, of the isotopy is a leafwise curve. Thus, for $\whg \in \fA_{\cG}$ the action of $\rho_{\cG}(\whg)$ on $X_{\cG}$ permutes these trace curves, and hence preserves the   isotopy equivalence relation between the maps $h_0$ and $h_1$.
It follows that there is  a well-defined map on the isotopy classes  
\begin{equation}\label{eq-modinduced}
\mu_{\cG} \colon  \fA_{\cG} \times \MCG(\fM_{\Phi} ; L_{\whx})   \to \MCG(\fM_{\Phi})  \ .
\end{equation}
For each $\whg \in \fA_{\cG}$, the map $\rho_{\cG}(\whg)$ is uniformly continuous on $\fM_{\Phi}$,  and therefore acts uniformly continuous on $\Homeo(\fM_{\Phi})$. Thus we obtain a continuous action by  $ \fA_{\cG} $ which descends to an action on the quotient space
\begin{equation}\label{eq-actionmap}
\omu_{\cG} \colon   \fA_{\cG} \times_{\zeta}   \MCG(\fM_{\Phi} ; L_{\whx})     \to  \MCG(\fM_{\Phi})  \ .
\end{equation}
We next show that the  map \eqref{eq-actionmap}  is injective.  
Let  $h, h' \in \Homeo(\fM_{\Phi} ; L_{\whx})$ and $\whg, \whg' \in \fA_{\cG}$  be such that   $\rho_{\cG}(\whg)(h)$ and $\rho_{\cG}(\whg')(h')$ are    isotopic. 
Then for $\whg'' =   \whg' \cdot \whg^{-1}$,  the maps $h$ and $\rho_{\cG}(\whg'') \circ h'$ are isotopic in $\Homeo(\fM_{\Phi})$, and we need to show they are isotopic in $\Homeo(\fM_{\Phi} ; L_{\whx})$.

Let    $H \colon \fM_{\Phi} \times [0,1] \to \fM_{\Phi}$ be an isotopy with  
$$h =  H |_{t=0}  \colon \fM_{\Phi} \times \{0\} \to \fM_{\Phi} \quad {\rm  and} \quad  \rho_{\cG}(\whg'') \circ h' =  H |_{t=1}  \colon \fM_{\Phi} \times \{1\} \to \fM_{\Phi}  \  .$$
The map $h$ preserves the leaf $L_{\whx}$, thus  $h(\whx) \in L_{\whx}$ and hence  the trace $H(\whx, t)$  is a path in $L_{\whx}$. 

Let $h_t(\why) = H(\why, t)$ for $\why \in \fM_{\Phi}$, which defines $h_t \in \Homeo(\fM_{\Phi})$.  We have that $h_t(\whx) = H(\whx, t) \in L_{\whx}$ for $0 \leq t \leq 1$, so $h_t$ is an isotopy in $\Homeo(\fM_{\Phi} ; L_{\whx})$. 
 It follows that 
the map $\omu_{\cG}$ in \eqref{eq-actionmap} is injective.

Finally, by   Proposition~\ref{prop-baseleaf} we have  $\MCG_{BLF}(\fM_{\Phi}) \cong \MCG(\fM_{\Phi} ; \whx)$ so that Theorem~\ref{thm-normalize} is proven. 
\endproof

 Here is the first application of Theorem~\ref{thm-normalize}.
   
      \begin{thm}\label{thm-nonhomogeneous2}
  Let   $\fM_{\Phi}$ be a homogeneous   solenoid  with   simply connected baseleaf $L_{\whx} \subset \fM_{\Phi}$. 
  Then there is a homeomorphism
  \begin{equation}\label{eq-partialproddyn2}
\MCG(\fM_{\Phi}) \cong  \fG_{\cG}  \times_{\zeta}   \MCG(\fM_{\Phi} ; L_{\whx})     \ .
\end{equation}
  \end{thm}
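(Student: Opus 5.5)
We will deduce the statement from Theorem~\ref{thm-normalize}. The extra input is homogeneity: by \cite{CHL2019a,FO2002}, $\fM_{\Phi}$ is homogeneous exactly when $\fD_{\cG}$ is trivial. In that case $\fN_{\cG} = \fG_{\cG}$ and $\fA_{\cG} = \fD_{\cG}\backslash \fN_{\cG} = \fG_{\cG}$. We also get $\G_{\fN} = \G$, and the quotient in \eqref{eq-qiotientspace} becomes $\G\backslash(\fG_{\cG} \times \Homeo(\fM_{\Phi};L_{\whx}))$. Moreover $X_{\cG} = \fG_{\cG}$, so the right action $\rho_{\cG}$ of $\fG_{\cG}$ on $\fM_{\Phi}$ acts simply transitively on each fiber $X_{\cG}$. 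Theorem~\ref{thm-normalize} together with Proposition~\ref{prop-baseleaf} then gives a continuous injection
\[
\omu_{\cG} \colon \fG_{\cG} \times_{\zeta} \MCG(\fM_{\Phi};L_{\whx}) \to \MCG(\fM_{\Phi}).
\]
It remains to prove that $\omu_{\cG}$ is surjective and that its inverse is continuous.

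\textbf{Surjectivity.} Let $f \in \Homeo(\fM_{\Phi})$. Its image leaf $f(L_{\whx})$ meets the transversal $X_{\cG}$ in some point $y$, because every leaf of the suspension is the image of some $\wtM_0 \times \{y\}$. Since $X_{\cG} = \fG_{\cG}$, there is a unique $\whg \in \fG_{\cG}$ with $x\cdot\whg = y$. Each $\rho_{\cG}(\whg)$ maps the leaf of $[z,x']$ to the leaf of $[z,x'\cdot\whg]$. Hence $h = \rho_{\cG}(\whg)^{-1}\circ f$ satisfies $h(L_{\whx}) = L_{\whx}$, that is, $h \in \Homeo(\fM_{\Phi};L_{\whx})$. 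Then $f = \rho_{\cG}(\whg)\circ h = \mu_{\cG}(\whg^{-1},h)$, so $\omu_{\cG}$ is already surjective at the level of homeomorphisms, and hence also on isotopy classes.

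\textbf{Continuity of the inverse.} The choice of $y$ above is not unique: $y$ is well defined only up to the $\G$-orbit of points of $f(L_{\whx}) \cap X_{\cG}$. Changing $y$ to $\gamma\cdot y$ changes the pair $(\whg^{-1},h)$ by exactly the $\zeta$-action of $\gamma$, by \eqref{eq-normaction3} and the identity $\rho_{\cG}(\ogamma)\circ C(\gamma) = id$. So the inverse map is well defined into the $\zeta$-quotient. The main obstacle is showing this inverse is continuous, i.e.\ that the class of $\whg$ in $\fG_{\cG}/\oG$-orbits depends continuously on $[f]$. First I will show it is locally constant modulo $\G$: a small perturbation of $f$ moves $f(\whx)$ by a short leafwise-plus-transverse amount, which can be corrected by a nearby $\whg'$ close to $\whg$. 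Passing to isotopy classes, an isotopy $f_t$ moves $f_t(\whx)$ along a leaf, so the corresponding $\whg_t$ varies within one $\G$-coset class, compatible with the quotient. Finally, $\fG_{\cG}$ is compact. If the quotient topology behaves well, a continuous bijection from the compact factor side suffices. If not, I will instead verify openness of $\omu_{\cG}$ directly, using the local triviality of $\rho_{\cG}$ on $\wtM_0\times X_{\cG}$.
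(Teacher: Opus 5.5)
Your argument is the paper's: triviality of $\fD_{\cG}$ gives $\fA_{\cG}=\fN_{\cG}=\fG_{\cG}$ and $\G_{\fN}=\G$, injectivity comes from Theorem~\ref{thm-normalize} with Proposition~\ref{prop-baseleaf}, and surjectivity comes from using transitivity of $\rho_{\cG}$ on $X_{\cG}$ to carry $f(L_{\whx})$ back to the baseleaf. Your version of that step lands in $\Homeo(\fM_{\Phi};L_{\whx})$ rather than in $\Homeo(\fM_{\Phi};\whx)$ as the paper writes, and it is the more accurate one.

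The paper stops at bijectivity and never addresses continuity of the inverse, so your last paragraph goes beyond it. Note, however, that the compact-to-Hausdorff shortcut you float is unavailable. The twisted product need not be compact, and $\MCG(\fM_{\Phi})$ need not be Hausdorff: for a $p$-adic solenoid, $\Homeo_0$ contains the translations $\rho_{\cG}(n)$, $n\in\mZ$, but not their limits $\rho_{\cG}(\whg)$ with $\whg\notin\mZ$. So if you want that extra step, the local-section argument via foliation charts is the one to pursue.
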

\proof
The discriminant group $\fD_{\cG}$ is trivial exactly when $\fM_{\Phi}$ is a homogeneous space \cite{FO2002,DHL2016a}, so $\fM_{\Phi}$ homogeneous implies that  $\fN_{\cG} = \fA_{\cG} = \fG_{\cG}$.  

Let $h \in \Homeo(\fM_{\Phi})$ and let $\why \in h(L_{\whx}) \cap X_{\cG}$.
The right action $\rho_{\cG}$ of $\fG_{\cG}$ on $\fM_{\Phi}$ is   transitive on $X_{\cG}$ so there exists $\whg \in \fG_{\cG}$ such that $\rho_{\cG}(\whg)(\why) = \whx$. Then $\rho_{\cG}(\whg) \circ h \in \Homeo(\fM_{\cG} ; \whx)$.

It follows that the map $\mu_{\cG}$ in \eqref{eq-tau21} is onto, so the claim follows.
\endproof

  Corollaries~\ref{cor-homogeneous} and \ref{cor-nonhomogeneous}  follow from  Theorems~\ref{thm-summary} and \ref{thm-nonhomogeneous2}.

For a non-homogeneous solenoidal manifold $\fM_{\Phi}$, the group $\fD_{\cG}$ is non-trivial, and the following property of this group implies that its normalizer is never all of $\fG_{\cG}$.
Recall that $\ogamma = \iota_{\cG}(\gamma) \in \fG_{\cG}$ is the image of $\gamma \in \G$ in the completion $\fG_{\cG}$.
 
 \begin{lemma}\cite[Corollary~5.4]{DHL2016a} \label{lem-cnn}
 The normal core $\bigcap_{\gamma \in \G} \ \  \ogamma^{-1} \ \fD_{\cG} \ \ogamma \ = \ \{e\}$ is trivial.
 \end{lemma}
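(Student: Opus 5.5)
The plan is to show that any normal subgroup of $\fG_{\cG}$ contained in $\fD_{\cG}$ acts trivially on $X_{\cG}$, and then to use the fact that $\fG_{\cG}$ acts effectively on $X_{\cG}$ by construction. Set $N = \bigcap_{\gamma \in \G} \ogamma^{-1} \fD_{\cG} \ogamma$. Since $\fD_{\cG}$ is closed (it is an isotropy group of a continuous action), $N$ is closed, being an intersection of closed sets. By construction $N$ is invariant under conjugation by the image $\iota_{\cG}(\G)$. Since $\iota_{\cG}(\G)$ is dense in $\fG_{\cG}$ and conjugation is continuous, the closed subgroup $N$ is then normal in $\fG_{\cG}$. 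So $N$ is a normal subgroup of $\fG_{\cG}$ contained in $\fD_{\cG}$, and it suffices to show that such a subgroup is trivial.

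Next, take $\whk \in N$. For each $\gamma \in \G$ we have $\ogamma \whk \ogamma^{-1} \in \fD_{\cG}$, so $\oPhi(\whk)(\ogamma^{-1}\cdot x) = \ogamma^{-1}\cdot x$. Hence $\whk$ fixes every point of the orbit $\G \cdot x$. This orbit is dense in $X_{\cG}$ by minimality, so $\oPhi(\whk)$ is the identity on $X_{\cG}$. The alternative is to argue at finite levels: $\whk$ is represented by $(k_\ell C_\ell)$ with $k_\ell \in \bigcap_{g} g\G_\ell g^{-1} = C_\ell$ at each level, so each coordinate is trivial.

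Finally, I would check that the action $\oPhi$ of $\fG_{\cG}$ on $X_{\cG}$ is faithful. Each $G_\ell = \G/C_\ell$ acts faithfully on $X_\ell = \G/\G_\ell$, because $C_\ell$ is exactly the kernel of this action. Therefore an element of the inverse limit $\fG_{\cG}$ acting trivially on all $X_\ell$ has trivial coordinates, so $\whk = e$. This is the step I would expect to need the most care: $\fG_{\cG}$ is defined as an inverse limit of the $G_\ell$, and its action on $X_{\cG}$ must be identified coordinatewise with the actions of $G_\ell$ on $X_\ell$. Once that identification is written down, the triviality of $N$ follows, which is the statement of Lemma~\ref{lem-cnn}.
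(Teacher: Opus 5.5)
Your argument is correct and is essentially the paper's proof. Both rest on the faithfulness of the $\fG_{\cG}$-action on $X_{\cG} \cong \fG_{\cG}/\fD_{\cG}$ together with a density argument. You run it directly: an element of the core fixes the dense orbit $\G\cdot x$, so it acts trivially and is the identity. The paper runs it contrapositively: a non-trivial $\whh \in \fD_{\cG}$ moves some coset $\whg\,\fD_{\cG}$, and density of $\iota_{\cG}(\G)$ plus closedness of $\fD_{\cG}$ then yield a suitable $\gamma \in \G$.

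Your first paragraph, showing $N$ is normal in $\fG_{\cG}$, is never actually used. Your finite-level variant, where $k_\ell \in \bigcap_{g} g\G_\ell g^{-1} = C_\ell$, gives faithfulness and the conclusion in one step.
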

 \proof
Recall that     $\fD_{\cG}$ is identified with a subgroup of $\Homeo(X_{\cG})$, via the left action of $\fG_{\cG}$ on the space, so each non-trivial  $\whh \in \fD_{\cG}$ acts non-trivially on $X_{\cG} \cong \fG_{\cG}/\fD_{\cG}$. 

Thus, for $\whh \ne id$, there is some $\whg \in \fG_{\cG}$ such that  $\whh \cdot \whg \ \fD_{\cG} \ne \whg \ \fD_{\cG}$,  hence  $\whg^{-1} \  \whh \ \whg \not\in \fD_{\cG}$. 
 As the image of $\iota_{\cG} \colon \G \to  \fG_{\cG}$ is dense and $\fD_{\cG}$ is closed, there exists $\gamma \in \G$ such that  $\ogamma^{-1} \ \whh \ \ogamma \not\in \fD_{\cG}$.
\endproof

\begin{cor}
If $\fD_{\cG}$ is non-trivial, then its normalizer $\fN_{\cG}$ is a proper subgroup of $\fG_{\cG}$.
\end{cor}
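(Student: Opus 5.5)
We argue by contradiction, using Lemma~\ref{lem-cnn} together with the density of $\iota_{\cG}(\G)$ in $\fG_{\cG}$. Suppose that $\fD_{\cG}$ is non-trivial and that $\fN_{\cG} = \fG_{\cG}$, so that $\fD_{\cG}$ is a normal subgroup of $\fG_{\cG}$. In particular, for every $\gamma \in \G$ the image $\ogamma = \iota_{\cG}(\gamma)$ lies in $\fN_{\cG}$. Hence $\ogamma^{-1}\, \fD_{\cG}\, \ogamma = \fD_{\cG}$ for all $\gamma \in \G$.

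Taking the intersection over all $\gamma \in \G$ then gives
\[
\bigcap_{\gamma \in \G} \ogamma^{-1}\, \fD_{\cG}\, \ogamma = \fD_{\cG} \ .
\]
By Lemma~\ref{lem-cnn}, the left-hand side is the trivial group $\{e\}$. This forces $\fD_{\cG} = \{e\}$, contradicting the hypothesis. Therefore $\fN_{\cG} \neq \fG_{\cG}$, and $\fN_{\cG}$ is a proper subgroup.

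Note that only the conjugates by elements of $\iota_{\cG}(\G)$ are needed, and these already lie in $\fN_{\cG}$ under the contrary assumption. So no continuity or closure argument is required beyond what is already built into Lemma~\ref{lem-cnn}. The one point to check carefully is that $\fN_{\cG}$ is the normalizer taken inside $\fG_{\cG}$, so that assuming $\fN_{\cG} = \fG_{\cG}$ really does put every $\ogamma$ in it. This is immediate from the definition $\fN_{\cG} = \fN_{\fG_{\cG}}(\fD_{\cG})$.

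Equivalently, the argument can be phrased directly: any non-trivial $\whh \in \fD_{\cG}$ has some $\gamma$ with $\ogamma^{-1} \whh\, \ogamma \notin \fD_{\cG}$, so that $\ogamma \notin \fN_{\cG}$. This exhibits an explicit element of $\fG_{\cG} \setminus \fN_{\cG}$, and it even lies in the dense subgroup $\iota_{\cG}(\G)$. There is no real obstacle here.
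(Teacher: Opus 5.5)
Your argument is correct and is essentially the paper's proof. Assuming $\fN_{\cG} = \fG_{\cG}$ makes every conjugate $\ogamma^{-1}\fD_{\cG}\ogamma$ equal to $\fD_{\cG}$, so the normal core in Lemma~\ref{lem-cnn} would be $\fD_{\cG} \neq \{e\}$, a contradiction; the paper routes this through the conjugate normalizers $\ogamma^{-1}\fN_{\cG}\ogamma$, but your direct version is the same idea stated more cleanly.
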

\proof
For $\gamma \in \G$, set $\fD_{\gamma} = \ogamma^{-1} \ \fD_{\cG} \ \ogamma$. The normalizer of $\fD_{\gamma}$ is the conjugate
  $\fN(\fD_{\gamma})  =  \ogamma^{-1} \ \fN_{\cG}  \ \ogamma$. So if $\fN_{\cG} = \fG_{\cG}$, then every conjugate $\fN(\fD_{\gamma})   = \fG_{\cG}$ and   this contradicts Lemma~\ref{lem-cnn}.
 \endproof
 
We conclude with an observation about the non-homogeneous case.  The intersection of the baseleaf $L_{\whx}$ with the transversal $X_{\cG}$ is identified with the orbit $\cO(x) = \{ \Phi(\gamma)(x) \mid \gamma \in \G\}$  of the $\G$ action on $X_{\cG}$, so  $L_{\whx} \cap X_{\cG} = \cO(x)$. Also, from the proof of Proposition~\ref{prop-baseleaf}, for any $\gamma \in \G$ there exists $h_{\gamma} \in \Homeo_0(\fM_{\cG})$ such that $h_{\gamma}(\whx) = \gamma \cdot x \in L_{\whx} \cap X_{\cG}$.  Thus, for any $\why \in L_{\whx} \cap X_{\cG}$ we have
$$\Homeo(\fM_{\cG} ; \why)  = h_{\gamma} \circ  \Homeo(\fM_{\cG} ; \whx) \circ h_{\gamma}^{-1}   \subset \Homeo(\fM_{\cG}) \ .$$ 
Thus, we can apply Theorem~\ref{thm-normalize} to the conjugate subgroups of $\Homeo(\fM_{\cG} ; \whx)$ to obtain:

\begin{cor}\label{cor-BC}
 Let $\fM_{\Phi}$ be a solenoidal manifold. 
Assume the baseleaf $L_{\whx} \subset \fM_{\Phi}$ is simply connected. Then $\MCG(\fM_{\Phi})$ contains the subgroup generated by all images of $\fN(\fD_{\gamma})  \  \times_{\zeta}  \  \MCG(\fM_{\Phi} ; \why) $ under the corresponding multiplication map $ \mu_{\cG, \why}$ for $\why =\gamma \cdot \whx \in X_{\cG}$.
\end{cor}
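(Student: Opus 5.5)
The plan is to reduce Corollary~\ref{cor-BC} to Theorem~\ref{thm-normalize} by conjugation. For $\gamma \in \G$ and $\why = \gamma\cdot\whx$, I would first transport the whole construction of Section~\ref{sec-normalizer} from the basepoint $x$ to the basepoint $y = \Phi(\gamma)(x) \in X_{\cG}$. The isotropy group of the $\fG_{\cG}$-action at $y$ is the conjugate $\ogamma\,\fD_{\cG}\,\ogamma^{-1}$, which matches the group $\fD_{\gamma}$ of the preceding corollary up to the convention of replacing $\gamma$ by $\gamma^{-1}$. Its normalizer is therefore the conjugate $\fN(\fD_{\gamma}) = \ogamma^{-1}\fN_{\cG}\,\ogamma$. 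The group chain based at $y$ is the conjugate chain $\{\gamma\G_{\ell}\gamma^{-1}\}$, whose suspension is canonically the same space $\fM_{\Phi}$.

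Next I would check that the hypothesis is preserved. The point $\why$ lies on the same leaf $L_{\whx}$, which is simply connected by assumption. Hence Theorem~\ref{thm-normalize}, applied with basepoint $\why$ and the conjugated data, gives an injection
\[
\omu_{\cG,\why} \colon \fA_{\gamma} \times_{\zeta} \MCG(\fM_{\Phi};\why) \to \MCG(\fM_{\Phi}),
\]
where $\fA_{\gamma}$ is the quotient of $\fN(\fD_{\gamma})$ by $\fD_{\gamma}$. Composing with the quotient map $\fN(\fD_{\gamma}) \to \fA_{\gamma}$ gives the image of $\fN(\fD_{\gamma}) \times_{\zeta} \MCG(\fM_{\Phi};\why)$ appearing in the statement.

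Alternatively, I would use the isotopy $h_{\gamma} \in \Homeo_0(\fM_{\Phi})$ from Proposition~\ref{prop-baseleaf}, which satisfies $h_{\gamma}(\whx) = \why$. It gives $\Homeo(\fM_{\Phi};\why) = h_{\gamma}\Homeo(\fM_{\Phi};\whx)h_{\gamma}^{-1}$. Since $h_{\gamma}$ is isotopic to the identity, conjugation by it is trivial on $\MCG(\fM_{\Phi})$. So the classes coming from basepoint $\why$ are honest elements of $\MCG(\fM_{\Phi})$, compatible with those from $\whx$.

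Finally, each image $\omu_{\cG,\why}(\cdots)$ is a subset of the group $\MCG(\fM_{\Phi})$. The subgroup generated by the union of these images is therefore a subgroup of $\MCG(\fM_{\Phi})$, which is the claim. The only genuine check is that the right action $\rho$ of $\fN(\fD_{\gamma})$ at basepoint $y$ and the map $C$ of \eqref{eq-normaction} are the conjugates of the original ones by $\ogamma$. This is a bookkeeping computation with $X_{\cG} \cong \fG_{\cG}/\fD_{\cG}$, sending $\whg\fD_{\cG} \mapsto \whg\ogamma^{-1}(\ogamma\fD_{\cG}\ogamma^{-1})$.

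The main obstacle I expect is the convention for left versus right conjugation, so that $\fN(\fD_{\gamma})$ as defined in the text is exactly the normalizer of the isotropy group at $\why$. If it is instead the isotropy group at $\gamma^{-1}\cdot\whx$, I would simply reindex over $\gamma^{-1}$. This does not affect the generated subgroup, since $\gamma$ ranges over all of $\G$.
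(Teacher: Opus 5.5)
Your proposal is correct and follows essentially the paper's route: the paper also takes $h_{\gamma} \in \Homeo_0(\fM_{\Phi})$ from the proof of Proposition~\ref{prop-baseleaf} with $h_{\gamma}(\whx) = \why$, writes $\Homeo(\fM_{\Phi};\why) = h_{\gamma}\circ\Homeo(\fM_{\Phi};\whx)\circ h_{\gamma}^{-1}$, and applies Theorem~\ref{thm-normalize} at each basepoint $\why$ on the simply connected leaf $L_{\whx}$. Your explicit re-basing via the conjugate chain $\{\gamma\G_{\ell}\gamma^{-1}\}$, and your remark on the $\gamma$ versus $\gamma^{-1}$ convention for $\fD_{\gamma} = \ogamma^{-1}\fD_{\cG}\ogamma$, make precise some bookkeeping that the paper leaves implicit.
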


That is, 
$\MCG(\fM_{\Phi})$ contains  the embedded image of $\fA_{\cG} \times_{\zeta}   \MCG(\fM_{\Phi} ; \whx)$ and the subgroup generated by all of its conjugates by $\Homeo_0(\fM_{\Phi}, \whx)$.
Note that this subgroup can never be all of $\MCG(\fM_{\Phi})$ in the non-homogeneous case, as otherwise the action of $\Homeo(\fM_{\Phi})$ on $\fM_{\Phi}$ would be transitive, which by  Fokkink and Oversteegen \cite{FO2002} would imply that $\fM_{\Phi}$ is homogeneous space.
  
Corollary~\ref{cor-BC}  should be compared to  Theorem~5.6 by Boyle and  Chuysurichay \cite{BoyleChuysurichay2018}, which describes  the mapping class group of suspensions of subshifts  of finite type.

 \section{Examples}\label{sec-examples}
 
    We give a collection of basic  examples which illustrate the  calculation of $\MCG(\fM_{\Phi})$. The calculation of more difficult or involved examples are graciously left to the reader, or possibly to Claude.
              
      The first step in the calculation of $\MCG(\fM_{\Phi})$ for the suspension of the odometer $(X_{\cG}, \G, \Phi)$ is to determine  $\Comm(\G)$. This group   has fortunately been calculated in many cases  \cite{Studenmund2015,Studenmund2024}. 
      
 The second step  is to calculate $\Comm(X_{\cG}, \G, \Phi, x)$, which consists of local conjugacies of the action $\Phi$ at the basepoint $x \in X_{\cG}$. A  local homeomorphism $h_{loc} \in \Comm(X_{\cG}, \G, \Phi, x)$ determines a commensurator $[\phi] = \Theta_{\cG}(h_{loc}) \in \Comm(\G)$ as in Diagram~\ref{eq-diagram}.

If the action $(X_{\cG}, \G, \Phi)$ satisfies the hypotheses of Theorem~\ref{thm-gottalift} and the base manifold $M$ satisfies the ultra Borel Property, as in      Remark~\ref{rmk-borelex}, then $[\phi] =  \chi([h])$ for some $h \in \Homeo(\fM_{\Phi} ; \whx)$.
 
Finally, we calculate the normalizer $\fN_{\cG}$    to complete the calculation of   $\MCG(\fM_{\Phi})$ when $\fM_{\Phi}$ is  homogeneous, or to obtain a partial calculation of $\MCG(\fM_{\Phi})$ in the non-homogeneous case.

\subsection{Dynamical commensurators}\label{subsec-dyncomm}

We briefly discuss the calculation of  $\Comm(X_{\cG}, \G, \Phi, x)$ for an odometer  $(X_{\cG}, \G, \Phi)$, as this is a key point in the following examples.

Recall from Definition~\ref{def-dyncomm-point} that a dynamical commensurator at $x$ is a   homeomorphism $h_U \colon U \to V$  where  $U, V \subset \fX$ are adapted subsets with $x \in U \cap V$,  $h_U(x) = x$,  and such that $h_U$ induces an isomorphism $\Theta_U \colon \cH_U \to \cH_V$ of their restricted holonomy actions.
  
 For the model space $X_{\cG}$ defined by the group chain $\{\G_{\ell} \mid \ell \geq 0\}$, we take $x = \cap_{\ell > 0} \ U_{\ell}$ and $U= U_{\ell}$ for some $\ell > 0$. Then a commensurator $\phi \in \Comm(\G)$ induces a map $h_{\phi}$, if  for some $\ell_0$,  $\phi$ maps the chain $\{\G_{\ell} \mid \ell \geq \ell_0\}$   to a chain in  $\G$ that is zig-zag equivalent to $\cG$. 
 
 That is,  $\phi$ must satisfy three conditions:
 \begin{enumerate}
\item For   $\ell > \ell_0$ sufficiently large,   there exists $ j > k$ such that $\G_j \subset \phi(\Gamma_{\ell}) \subset \G_{k}$. 
\item  The map $h_{\phi}$ satisfies $h_{\phi}(x) = x$ if  for all $k > 0$ there exists some $\ell > 0$ such that $\phi(\Gamma_{\ell}) \subset \G_{k}$.
\item  The map $h_{\phi}$ is a local homeomorphism if the   inverse commensurator $\phi^{-1}$ maps the chain $\{\G_{\ell} \mid \ell \geq \ell_0'\}$ for some $\ell_0'$,  to a chain in  $\G$ that is zig-zag equivalent to $\cG$.
\end{enumerate}

 Two commensurators $\phi, \phi' \in \Comm(\G)$ satisfy $[h_{\phi}] = [h_{\phi'}] \in    \Comm(X_{\cG}, \G, \Phi, x)$ if for some $\ell_0$ sufficiently large,   the chains 
 $\{\phi(\G_{\ell}) \mid \ell \geq \ell_0\}$ and $\{\phi'(\G_{\ell}) \mid \ell \geq \ell_0\}$ are  zig-zag equivalent.
    
 We use these conditions in the following examples to construct classes in    $\Comm(X_{\cG}, \G, \Phi, x)$.
   
 \subsection{Universal solenoids} \label{subsec-universal}

The calculation of the   group $\MCG(\fM_{\cP} ; \whx)$    by Odden \cite{Odden2005} and Bering and Studenmund \cite{BeringStudenmund2024} are all for   special cases where $\fM_{\cP}$ is the universal solenoidal manifold over an aspherical manifold $M$.  In this case, the map $\Theta_{\cG}$ in Diagram~\ref{eq-diagram} is always defined, and the normalizer $\fN_{\cG} = \fG_{\cG} = \whGamma$ is the full profinite completion of $\G$. We briefly recall this construction.

Recall that $\G$ is finitely generated.  Let  $\cG_f(\G)$ denote the collection of all finite-index subgroups of $\G$,  partially ordered   by inclusion. 
Let $[\G:H]$ denote the index of $H$ in $\G$. Then for each $k > 0$,  the intersection $N_k = \bigcap_{[\G:H] \leq k}   H$ is a normal subgroup which is \emph{characteristic}; that is,   $N_k$ is invariant under every automorphism of $\G$. In particular, each subgroup $N_k$ is normal.

Choose a properly descending subchain   $\{N_{k_\ell} \mid \ell > 0\}$, set $\G_{\ell} = N_{k_{\ell}}$ then form the    normal group chain  $\cG_{u}(\G) = \{ \G = \G_1 \supset \G_2  \supset \cdots \}$.   For    any commensurator $[\phi] \in \Comm(\G)$, it is immediate to check that the three conditions in Section~\ref{subsec-dyncomm} are satisfied, so $\chi$ is onto.  
As a result,  by Theorem~\ref{thm-homogeneous} we obtain a common generalization of the results of Odden in Theorem~\ref{thm-odden}, and of Bering and Studenmund in Theorem~\ref{thm-BS2024}:

\begin{thm}\label{thm-universalthm}
Let $M$ be an ultra Borel manifold, and $\G = \pi_1(M, x)$ be residually finite. Then the map $\chi \colon \MCG(\whM, \whx) \to \Comm(\G)$ is an isomorphism. Moreover, the map $\chi$ induces an isomorphism $\MCG(\whM) \to \whGamma \times_{\zeta} \Comm(\G)$.
\end{thm}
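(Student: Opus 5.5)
The plan is to assemble Theorem~\ref{thm-universalthm} from Theorem~\ref{thm-summary}, Theorem~\ref{thm-nonhomogeneous2}, and the description of $\Comm(X_{\cG}, \G, \Phi, x)$ in Section~\ref{subsec-dyncomm}, applied to the universal chain $\cG_u(\G) = \{N_{k_\ell}\}$. First I would verify the hypotheses of Theorem~\ref{thm-summary}. Since $M$ is ultra Borel it is aspherical with torsion-free $\G$. Since $\G$ is residually finite and $\{N_{k_\ell}\}$ is cofinal among finite-index subgroups, $\ker(\cG_u) = \bigcap_\ell N_{k_\ell} = \{e\}$. By the formula $\pi_1(L_{\whx}, \whx) = \{\gamma \in \G \mid \gamma\cdot x = x\} = \ker(\cG_u)$ preceding Lemma~\ref{lem-simplyconnected}, the baseleaf is simply connected. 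Theorem~\ref{thm-summary} then shows that $\sigma_{\cP,\whx}$ is an isomorphism and $\Theta_{\cG}$ is injective. Since $\chi = \Theta_{\cG}\circ\sigma_{\cP,\whx}$, this reduces injectivity of $\chi$ to what is already proved, so it remains to show $\Theta_{\cG}$ is onto $\Comm(\G)$.

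For surjectivity, take $\phi \colon H \to K$ representing a class in $\Comm(\G)$ and check the three zig-zag conditions of Section~\ref{subsec-dyncomm}. Choose $\ell_0$ with $N_{k_{\ell_0}} \subset H$. For $\ell \ge \ell_0$, $\phi(N_{k_\ell})$ has finite index $[H:N_{k_\ell}]$ in $K$, hence finite index in $\G$. By cofinality it contains some $N_{k_j}$. Conversely, given $k$, some $N_{k_\ell}$ lies in the finite-index subgroup $\phi^{-1}(N_{k_k}\cap K)$, so $\phi(N_{k_\ell}) \subset N_{k_k}$. This gives conditions (1) and (2), and the same argument for $\phi^{-1}$ gives (3). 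Hence $\phi$ induces a germ $h_\phi \colon U_{\ell} \to V$ fixing $x$ and intertwining the restricted holonomy groups, and $\Theta_{\cG}[h_\phi] = [\phi]$ by the construction in \eqref{eq-induced}. Therefore $\chi$ is an isomorphism.

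For the full mapping class group, each $N_{k_\ell}$ is normal, so every $C_\ell = N_{k_\ell}$ and $\fD_{\cG} = \varprojlim N_{k_\ell}/N_{k_\ell}$ is trivial. Thus $\fM_{\cP}$ is homogeneous, $\fN_{\cG} = \fA_{\cG} = \fG_{\cG}$, and $\fG_{\cG} = \varprojlim \G/N_{k_\ell} = \whGamma$ by cofinality. Theorem~\ref{thm-nonhomogeneous2}, together with Proposition~\ref{prop-baseleaf}, gives $\MCG(\whM) \cong \whGamma \times_{\zeta} \MCG(\whM;\whx)$, and substituting the isomorphism $\chi$ yields $\MCG(\whM) \cong \whGamma\times_\zeta \Comm(\G)$. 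Here $\zeta$ is transported through $\chi$, with $\G_{\fN} = \G$ acting via $C(\gamma)$, which corresponds under $\chi$ to the inner commensurator of $\gamma$.

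The main obstacle is the surjectivity step. One must confirm that the combinatorial zig-zag conditions really produce an adapted-set homeomorphism $h_\phi$ whose induced holonomy isomorphism lifts back to $\phi$ itself, not merely to some commensurator agreeing with $\phi$ on a smaller subgroup. I would handle this by defining $h_\phi$ explicitly on $U_{\ell} = \varprojlim_{m\ge\ell} N_{k_\ell}/N_{k_m}$ via $\phi$ on cosets, using that $\{\phi(N_{k_m})\}$ is zig-zag equivalent to the universal chain. This makes $\Theta_U$ equal to conjugation by $\phi$ directly, and topological freeness (Lemma~\ref{lem-topfree}) then identifies the lift with $\phi$.
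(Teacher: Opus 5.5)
Your proposal is correct and follows the paper's route. The paper asserts that every commensurator satisfies the three zig-zag conditions of Section~\ref{subsec-dyncomm} for the characteristic chain $\cG_u(\G)$, so $\chi$ is onto, with injectivity supplied by Theorem~\ref{thm-summary} once residual finiteness makes the baseleaf simply connected. It then applies Theorem~\ref{thm-homogeneous} with $\fD_{\cG}$ trivial and $\fG_{\cG} = \whGamma$. Your explicit construction of $h_\phi$ on cosets, with topological freeness identifying the lift as $\phi$ itself, fills in the step the paper calls immediate.
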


 \subsection{Vietoris solenoids} \label{subsec-vietoris}
 
 A 1-dimensional solenoid has base   $\mS^1$ with $\G = \mZ$. It is a classical result, and an easy exercise, that  $\Comm(\mZ) \cong \mQ^*$ is the group of invertible rational numbers. 
 We   recall some examples of the calculation of $\MCG(\fS_{\Phi})$ where $\fS_{\Phi}$ is the solenoid associated to a group chain $\cG$ in $\mZ$. These examples are   calculated also  by Kwapisz in   \cite{Kwapisz2001,Kwapisz2011};  see also Lind and Ward \cite{LindWard1988}. 

 Let $\cG_u$ be the   group chain defined by   $\G_{m} = (m +1) ! \cdot \mZ$, which is a  universal group chain for $\mZ$.  The resulting solenoid   over $\mS^1$ is denoted  by  $\widehat{\mS^1}$, and the fiber $X_{\cG_u} = \whmZ$ is the profinite completion of $\mZ$.

For every non-zero $a \in \mZ$, let $\mu_a \colon \mZ \to \mZ$ be multiplication by $a$.
Again, it is immediate  to check that the three conditions in Section~\ref{subsec-dyncomm} are satisfied, so   we obtain a homeomorphism $h_a \in \Homeo(\widehat{\mS^1} , \whx)$ which realizes an element of  $\MCG(\widehat{\mS^1} ; \whx ) \cong \mQ^*$. 
The profinite completion $\whmZ$ acts on the right on $\widehat{\mS^1}$, so we obtain $\MCG(\widehat{\mS^1}) \cong \whmZ \times_{\zeta} \mQ^*$.

 Next, suppose we are given a group chain $\cG = \{\G_{\ell} \mid \ell \geq 0\}$ in $\mZ$. For each $\ell$ there is an integer $m_{\ell} > 1$ with $\G_{\ell} = m_{\ell} \cdot \G_{\ell -1}$.
The problem is   to determine when  multiplication by   $\alpha = a/b \in \mQ^*$  satisfies  the three conditions in Section~\ref{subsec-dyncomm}, and so generates an element in $\Comm(X_{\cG}, \mZ, \Phi, x)$.   The answer is given using the notion of  the   {Steinitz}  number associated to   $\vec{m} = \{ m_{\ell} > 1  \mid 1 \leq \ell < \infty\}$, which is the formal infinite product 
     \begin{equation}\label{eq-steinitzorder}
\alpha(\vec{m}) = {\rm lcm} \{ m_1   m_2 \cdots m_{\ell} \mid  \ell > 0\} \ ,
\end{equation}
  where ${\rm lcm}$ denotes the least common multiple of the collection of integers.

A Steinitz number $\alpha$ can be uniquely written   as the formal product over the set of primes   $\Pi$, 
\begin{equation}
\alpha  = \prod_{p \in \Pi} ~ p^{\chi_{\alpha}(p)} \  ,
\end{equation}
where the \emph{characteristic function} $\chi_{\alpha} \colon \Pi \to \{0,1,\ldots, \infty\}$ counts the multiplicity with which a prime $p$ appears in the infinite product $\alpha$.
\begin{defn}\label{defn-type}
Two Steinitz numbers  $\alpha$ and $\alpha'$ are said to be \emph{asymptotically equivalent} if there exist finite integers $m, m' \geq 1$ such that $m \cdot \alpha = m' \cdot \alpha'$, and we then write $\alpha \mor \alpha'$. The equivalence class of $\alpha$ is called its \emph{type}, and denoted by $\tau[\alpha]$. 
\end{defn}

Associate to the collection  $\vec{m} = \{ m_{\ell} > 1  \mid 1 \leq \ell < \infty\}$, covering maps   $p_{\ell} \colon \mS^1 \to \mS^1$ with  degree $m_{\ell}$. Let $\fS_{\vec{m}}$ denote the solenoidal manifold defined by  these maps.   
Then we have:
  \begin{thm}[Bing \cite{Bing1960}, McCord \cite{McCord1965},  Fokkink and Oversteegen \cite{FO2002}] \label{thm-onedimSol}
  Let $\fM$ be an orientable 1-dimensional solenoid. Then there exists $\vec{m}$ such that there is a homeomorphism  $\fM \cong \fS_{\vec{m}}$. Moreover,   solenoids $\fS_{\vec{m}}$ and $\fS_{\vec{m}'}$ are homeomorphic if and only if their types are equal, $\tau[\vec{m}] = \tau[\vec{m}']$.
  \end{thm}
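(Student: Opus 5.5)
The plan has three steps: show every orientable 1-dimensional solenoid is some $\fS_{\vec{m}}$, prove the ``if'' direction by an explicit zig-zag construction, and prove the ``only if'' direction via return equivalence of the monodromy odometers (Theorem~\ref{thm-charmap}) together with the Steinitz-number classification of $\mZ$-odometers.

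\emph{Existence.} A 1-dimensional solenoid in the sense of this paper is the inverse limit of a tower of finite coverings of compact connected 1-manifolds. Every such manifold is $\mS^1$, and every finite covering $\mS^1 \to \mS^1$ is, up to a homeomorphism of the total space, $z \mapsto z^{m}$ with $m = \pm$ degree. Orientability of $\fM$ lets us choose orientations on all the $M_\ell$ compatibly, so all degrees become positive integers $m_\ell>1$. Composing the identifications level by level (using Lemma~\ref{lem-canonical} to absorb reindexing) gives $\fM \cong \fS_{\vec{m}}$.

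\emph{Sufficiency.} Suppose $\tau[\vec{m}] = \tau[\vec{m}']$, so $a\cdot\alpha(\vec{m}) = a'\cdot\alpha(\vec{m}')$ for integers $a,a'\ge 1$. The group chains are $\G_\ell = (m_1\cdots m_\ell)\mZ$ and $\G'_\ell = (m'_1\cdots m'_\ell)\mZ$.
\begin{itemize}
\item The equality of Steinitz numbers up to finite factors says exactly that the chains $\{a\G_\ell\}$ and $\{a'\G'_\ell\}$ are zig-zag equivalent: each term of one contains some term of the other, by comparing prime exponents.
\item A zig-zag inclusion $a\G_{\ell} \supset a'\G'_{j}$ corresponds to a covering $\mS^1\to\mS^1$ of the appropriate degree.
\item These coverings fit into a commuting shuffle diagram as in \eqref{eq-shuffle} between the two towers, with the finite indices $a, a'$ absorbed by passing to the truncated towers $\cP_\ell$.
\item The shuffle maps in both directions induce mutually inverse maps of inverse limits, giving $\fS_{\vec{m}}\cong\fS_{\vec{m}'}$.
\end{itemize}

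\emph{Necessity.} Suppose $\fS_{\vec{m}}\cong\fS_{\vec{m}'}$. By Theorem~\ref{thm-charmap} the monodromy odometers on $X_{\cG}$ and $X_{\cG'}$ are return equivalent. Because $\mZ$ is abelian, $X_{\cG}$ is the group $\varprojlim \mZ/\G_\ell$, and for the adapted set $U_k$ the stabilizer is $\G_{U_k}=\G_k$. The restricted action of $\G_k \cong \mZ$ on $U_k$ is again an odometer, with Steinitz number $\alpha(\vec m)/(m_1\cdots m_k)$. A return equivalence $h_U\colon U\to U'$ yields an isomorphism $\cH_U\cong\cH'_{U'}$ of groups isomorphic to $\mZ$, hence given by $\pm 1$, and so conjugates two $\mZ$-odometers. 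Conjugate $\mZ$-odometers have the same Steinitz number, because that number is determined by the set of finite cyclic quotients of the action on clopen partitions, equivalently by the spectrum of the action. Hence $\alpha(\vec m)/N = \alpha(\vec m')/N'$ for finite integers $N, N'$, which is asymptotic equivalence.

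The main obstacle is this last step: proving that the Steinitz number is a conjugacy invariant of the $\mZ$-odometer restricted to an adapted set, and keeping track of the finite index factors $N, N'$ that come from restricting to $U$ and $U'$. I would handle it by identifying the eigenvalue group of the odometer on $U_k$ with the subgroup of $\mQ/\mZ$ generated by $1/d$ for all finite divisors $d$ of $\alpha(\vec m)/(m_1\cdots m_k)$. This eigenvalue group is manifestly a topological conjugacy invariant, and it determines the Steinitz number exactly.
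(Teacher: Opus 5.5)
The paper does not prove this theorem; it is quoted from Bing, McCord and Fokkink--Oversteegen, so your argument has to stand on its own. It does. The existence step is immediate from the paper's definition of a solenoid. Orientability is not actually needed there, since orientations pull back along coverings; the substantive ``existence'' content in the classical literature is Bing's characterization of solenoids among continua, which the paper's definition bypasses. For sufficiency, the standard argument is yours: compare prime exponents, interleave the chains $\mZ \supset a\G_0 \supset a\G_1 \supset \cdots$ and $\mZ \supset a'\G'_0 \supset a'\G'_1 \supset \cdots$, and truncate by Lemma~\ref{lem-canonical}.

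For necessity, two details should be written out. First, the $\mZ$-action on $X_{\cG}$ is free, so $\cH_U \cong \G_U \cong \mZ$ and $\Theta_U$ sends a generator to $\pm$ a generator. Second, you use $U_k$, but the adapted set $U$ supplied by return equivalence is arbitrary. If $\G_U = N\mZ$, then $U$ is invariant under $\overline{N\mZ}$, and the adapted property forces $U$ to be a single coset of $\overline{N\mZ}$ in the profinite group $X_{\cG}$. The stabilizer in $\mZ$ of that coset is $\overline{N\mZ} \cap \mZ$, and it must equal $N\mZ$. This forces $N \mid m_1 \cdots m_\ell$ for some $\ell$. So the restricted odometer has Steinitz number $\alpha(\vec m)/N$, and your eigenvalue comparison gives $N'\alpha(\vec m) = N\alpha(\vec m')$, as required.

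Where you genuinely depart from the classical proof is in necessity. The usual argument is cohomological. The group $\check H^1(\fS_{\vec m};\mZ) \cong \varinjlim\{\mZ \xrightarrow{m_1} \mZ \xrightarrow{m_2} \cdots\} \cong \bigcup_\ell \frac{1}{m_1\cdots m_\ell}\mZ \subset \mQ$ is a homeomorphism invariant. Baer's classification of rank-one torsion-free abelian groups says two such groups are isomorphic exactly when their types agree. Sufficiency can also be read off from Pontryagin duality, since $\fS_{\vec m}$ is the dual of this group. Your eigenvalue group for the odometer on $U$ is the image in $\mQ/\mZ$ of $N \cdot \check H^1(\fS_{\vec m};\mZ)$, so you are computing the same invariant. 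The difference is that you get its invariance only after passing through the return-equivalence theorem (Theorem~\ref{thm-charmap}), which is a deep foliated-space result. The rescaling by the index $N = [\mZ:\G_U]$ is exactly where the finite factors of asymptotic equivalence enter. The cohomological route is elementary and extracts the invariant directly from the homeomorphism. Yours stays inside the paper's dynamical framework and shows how return equivalence produces classification invariants for odometers, but it uses much heavier machinery than this result needs.
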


 Thus, for each  choice of a type $\tau(\vec{m})$, the problem is to calculate $\MCG(\fS_{\vec{m}})$. Here are examples.
  
Let $\fS_{\vec{p}}$ be the solenoid associated  an infinite sequence of distinct primes 
  $\vec{p} = \{1 < p_1 < p_2 < \cdots \}$ with $\cG_{\vec{p}}$ the corresponding group chain in $\mZ$.
   That is, we set $\G_{\ell} = p_1 p_2 \cdots p_{\ell} \cdot  \mZ$.  The type of $\cG_{\vec{p}}$ is given by the characteristic function $\chi_{\alpha}$ which takes value $1$ for $p \in \vec{p}$ and 0 otherwise.

  For subgroups $\G_k,\G_\ell \subset \mZ$ we have by the Lagrange theorem that $\G_k \subset \G_\ell $ if and only if the index $|\mZ: \G_\ell|$ divides $|\mZ: \G_k|$. Thus for $\mu_a$ we have that the chain $\cG_{\vec p}$ and its image $\cG_{\vec p}^{\mu_a}$ under $\mu_a$  are zig-zag equivalent if and only if for infinitely many $\ell >0$, there is $i_\ell$ such that $a p_1 \cdots p_{\ell}$ divides $p_1 p_2 \cdots p_{i_\ell}$. Since all primes $p_i$ are distinct, this is not possible, and there is no $a \in \mZ^*$ other than $\pm 1$ which produces a   chain  $\cG_{\vec{p}}^{\mu_a}$ which is zig-zag equivalent to $\cG_{\vec{p}}$.  So,  $\MCG(\fS_{\vec{p}} ; \whx ) \cong \{\pm 1\}$ and so $\MCG(\fS_{\vec{p}}) \cong \mZ_{\vec{p}} \times_{\mZ}    \{\pm1\}$, where 
  \begin{equation} 
    \mZ_{\vec{p}} = \lim_{\longleftarrow}\{\mZ/p_1p_2 \cdots p_{\ell+1} \cdot \mZ \to \mZ/ p_1p_2 \cdots p_\ell \cdot \mZ, \ \ell \geq 0\}
 \end{equation}
is the profinite completion of $\mZ$ with respect to the group chain defined by the sequence $\vec{p}$.

The next example has many generalizations. Choose a prime $p > 1$ and let $\cG_{\whp}$ be the group chain formed by the subgroups $\G_{\ell} = p^{\ell} \cdot \mZ \subset \mZ$ which has type $[p^{\infty}]$. Multiplication by a power of $p$ maps the group chain $\cG_{\whp}$ to a truncated chain $\cG_{\whp}^{\mu_{p^\ell}} = p^\ell \cdot \mZ \supset p^{\ell+1} \cdot \mZ \supset \cdots$. The other two conditions in Section~\ref{subsec-dyncomm} are also easily checked.  Thus,   $\MCG(\fS_{\whp} ; \whx ) \cong \{\pm1\} \times  \mZ$ and these are the only virtual automorphisms which produce a chain zig-zag equivalent to $\cG_{\whp}$. 
It follows that  $\MCG(\fS_{\whp}) \cong \whmZ_p \times_{\mZ}  (\{\pm1\} \times \mZ)$,   
 where $\whmZ_p$ is  the $p$-adic integers.

 The calculation of $\MCG(\fS_{\vec{m}})$ for all other collections of integer sequences $\vec{m}$ gives   results which are analogous to these calculations.  See Kwapisz \cite{Kwapisz2001} for further  calculations.

\subsection{Toral solenoids} \label{subsec-toral}
A \emph{toral solenoid} has  base manifold the $n$-dimensional torus $\mT^n$, and group $\G \cong \mZ^n$. Then $\Comm(\mZ^n) \cong {\rm GL}(\mQ^n)$. 
The manifold  $\mT^n$ is ultra Borel, so  we can apply Theorem~\ref{thm-summary}  to the suspension $\fM_{\Phi}$ of an effective odometer action    $(X_{\cG}, \Phi, \mZ^n)$. 
 
Consider first the universal toral solenoid $\widehat{\mT^n}$ is defined by the group chain $\cG_u$ with $\G_{\ell} = (\ell +1) ! \cdot \mZ^n$. There is an isomorphism of profinite completions $\whG_{\cG_u} \cong \widehat{\mZ^n}$. 
  Every element of    $\Comm(\mZ^n) \cong {\rm GL}(\mQ^n)$   maps the group chain $\cG_u$ to a zig-zag equivalent group chain, so we have   
  \begin{equation}\label{eq-toralauto}
\MCG(\widehat{\mT^n}, \whx) \cong {\rm GL}(\mQ^n) \quad {\rm and} \quad \MCG(\widehat{\mT^n}) \cong \widehat{\mZ^n}  \times_{\zeta}  {\rm GL}(\mQ^n)      \ .
\end{equation}
Note that Odden obtains the result \eqref{eq-toralauto} in \cite[Section~4.5]{Odden2005} for the case $\mT^2$, and also remarks that  $\MCG(\widehat{\mT^2})$ is  isomorphic as groups to the semi-direct product.

 Next,  consider the general case of a toral solenoid $\fM_{\Phi}$, with  group chain    $\cG = \{\G_0 \supset \G_1 \supset \G_2 \cdots \}$ where $\G_0 = \mZ^n$, and  each $\G_{\ell} \subset \mZ^n$ has  rank $n$.  
 For each $\ell > 0$, there exists an invertible $n \times n$ matrix $A_{\ell}$ with integer coefficients and determinant $m_{\ell} = |A_{\ell}| > 1$, such that the map $\phi_{A_{\ell}} \colon \G_{\ell-1} \to \G_{\ell}$ defined by multiplication is an isomorphism.
  Thus, 
$\G_{\ell} = \phi_{A_{\ell}} \circ \cdots \phi_{A_{1}} (\mZ^{n})$.

The $p$-adic toral solenoid $\whmT^n_{p}$ is defined by the group chain $\cG_{(p)}$ with $\G_{\ell} = p^{\ell} \cdot \mZ^n$, for a prime $p \geq 2$. That is, the matrices $A_{\ell}$ above are given by $p$ times the identity,   
and $\whG_{(p)} \cong \whmZ^n_{p}$, where $\whmZ_p$ is the $p$-adic integers. 
   Let  $\phi \in \Comm(\mZ^n) \cong {\rm GL}(\mQ^n)$ and assume that $\phi$ maps   the group chain $\cG_{(p)}$ to   a group chain equivalent to $\cG_{(p)}$. Then $\MCG(\widehat{\mT^n}, \whx) \cong {\rm GL}(\mZ^n [\frac{1}{p}])$, and so 
     $\MCG(\whmT^n_{p}) \cong \whmZ^n_{p}  \times_{\mZ^n} {\rm GL}(\mZ^n [\frac{1}{p}])$.

The next simplest  case is where each $A_{\ell}$   is a diagonal matrix  for all $\ell > 0$. For example, given  a   collection of disjoint primes $S = \{p_1, p_2, \ldots , p_n\}$, let $\cG_{S}$ be the group chain defined by the matrices $A_{\ell}$ whose diagonal entries are the $\ell$-th powers of primes in $S$ in the same order, and are 0 off the diagonal. The associated toral solenoid $\whmT^n_{S}$ is the product of 1-dimensional solenoids,  
$\ds \whmT^n_{S} \cong \fS_{\widehat{p_1}} \times \cdots \times \fS_{\widehat{p_n}}$. A map which is a return equivalence of the action of $\mZ^n$  on $X_{\cG_S}$    must preserve the factors for distinct primes, so
\begin{equation}\label{eq-prodprimes}
\Comm(X_{\cG_S}, \mZ^n, \Phi, x) \cong {\rm GL}(\mZ [\frac{1}{p_1}]) \times \cdots \times {\rm GL}(\mZ [\frac{1}{p_n}]) \cong  \mQ^* \times \cdots \times \mQ^* \ .
\end{equation}
 
One can modify this construction by choosing a multiplicity $n_i$ for each prime, then taking the product of solenoids 
$\whmT^n_S = \whmT^{n_1}_{p_1} \times \cdots \times \whmT^{n_k}_{p_k}$ where $n = n_1 + \cdots + n_k$. We then obtain
\begin{equation}
\Comm(X_{\cG_S}, \mZ^n, \Phi, x) \cong {\rm GL}(\mZ^{n_1} [\frac{1}{p_1}]) \times \cdots \times {\rm GL}(\mZ^{n_k} [\frac{1}{p_k}]) \ , 
\end{equation}
and so we have
 \begin{equation}\label{eq-fancytoral}
 \MCG(X_{\cG_S}) \cong 
\left\{\widehat{\mZ_{p_1}}^{n_1} \times \cdots \times \widehat{\mZ_{p_k}}^{n_k} \right\}   \times_{\zeta} \left\{ {\rm GL}(\mZ^{n_1} [\frac{1}{p_1}]) \times \cdots \times {\rm GL}(\mZ^{n_k} [\frac{1}{p_k}]) \right\} \ .
\end{equation}

The next class  of toral solenoid examples to consider are  when $A_{\ell} = A$ for all $\ell > 0$, and all eigenvalues of the integer $n \times n$ matrix $A$ are larger than 1, and $n \geq 2$. 
 Then $A$ induces a proper self-covering map $\phi_A \colon \mT^n \to \mT^n$. Let $\mT^n_A$ denote  the inverse limit solenoid.  The map $\phi_A$ induces a self-map $\oPhi_A \in \Homeo(\mT^n_A ; \whx)$, so that $\MCG(\mT^n_A ; \whx)$ always contains a factor of $\mZ$.  
 The surprising fact is that the group  $\MCG(\mT^n_A ; \whx)$ can be much larger and complex than this obvious subgroup of expanding maps. 
 
For example, when $n=2$,   Cabezas and  Petite give in   \cite[Theorem 3.3]{CabezasPetite2024}   a complete description of  $\MCG(\mT^2_A ; \whx)$. In addition, \cite[Example 3.4]{CabezasPetite2024}   describes one of the cases for $n > 2$.
The calculation depends on the algebraic number theory properties of the roots of the characteristic polynomial $p_A(\lambda)$ of $A$.  See also the work of Ha   and Lee  \cite{HaLee2023}.

When $n > 2$, in most cases the  complete calculation of  $\MCG(\mT^n_A ; \whx)$ is unknown, even in the case where 
$p_A(\lambda)$ is separable with distinct roots.  
The calculation of $\Comm(X_{\cG_A}, \mZ^n, \Phi, x)$ is equivalent to calculating the endomorphisms of the subgroup   $ G_{A^t}  =  \{ (A^t)^k\vec{x} \mid \vec{x} \in \mZ^n \ , \ k \in \mZ  \} \subset \mQ^n$  determined by the adjoint matrix $A^t$.  The most complete results  to date have been obtained by  Sabitova in \cite{Sabitova2024b}.

 \subsection{Nilpotent solenoids}\label{subsec-nilpotent}
 
A   \emph{nilmanifold} is a closed connected manifold $M$ whose fundamental group $\G = \pi_1(M,x)$ is   nilpotent, and   the universal cover $\wtM$ is contractible.  A nilmanifold is ultra Borel and its fundamental group is torsion free.

For a group chain $\cG$ in $\G$ which determines a presentation $\cP$,   the associated \emph{nilpotent solenoid} is denoted $\whM_{\cP}$.  
By Corollary~\ref{cor-nilp} and Theorem~\ref{thm-surjective}, we can   calculate $\MCG(\whM_{\cP}, \whx)$ in terms of its dynamical commensurator group  $\Comm(X_{\cG}, \G, \Phi, x)$.

A finitely generated nilpotent group $\G$ is residually finite, so  the universal   solenoid $\whM$  over $M$ contains a simply connected leaf. Then proceeding as in the abelian case, we have:
\begin{thm}\label{thm-nilmcg}
Let $M$ be a nilmanifold with $\G = \pi_1(M,x)$. Then
$\MCG(\widehat{M}) \cong \whGamma \times_{\zeta} \Comm(\G)$.
\end{thm}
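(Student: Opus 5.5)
The plan is to treat Theorem~\ref{thm-nilmcg} as a special case of Theorem~\ref{thm-universalthm}, combined with the homogeneous splitting of Theorem~\ref{thm-homogeneous}. The only work is to check that every hypothesis is satisfied for the universal solenoid $\whM$ over a nilmanifold $M$.

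\textbf{Base and group.} First, $M$ is ultra Borel. This holds because $M$ is an infra-nilmanifold, which appears in Remark~\ref{rmk-borelex}, and the uniqueness of the homeomorphism up to isotopy is the ultra Borel property asserted there for these classes. Second, $\G=\pi_1(M,x)$ is finitely generated nilpotent, hence residually finite. It is also torsion free, since $M$ is aspherical.

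\textbf{The leaf and the action.} Take the universal chain $\cG_u(\G)=\{N_{k_\ell}\}$ of characteristic subgroups. Residual finiteness gives $\ker(\cG_u)=\bigcap_\ell N_{k_\ell}=\{e\}$. Since $\pi_1(L_{\whx})=\Iso(\{x\})=\ker(\cG_u)$, the baseleaf $L_{\whx}$ is simply connected. By Lemma~\ref{lem-topfree}, the odometer $(\whGamma,\G,\Phi)$ is then topologically free, hence effective and coherent. Alternatively, this follows from Corollary~\ref{cor-nilp} and Proposition~\ref{prop-nilpotentcoherent}. Consequently Theorem~\ref{thm-gottalift} supplies the injection $\Theta_{\cG}$.

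\textbf{Surjectivity and homogeneity.} Since each $N_k$ is characteristic and these subgroups are cofinal among finite index subgroups, every commensurator $\phi\colon H\to K$ satisfies the three zig-zag conditions of Section~\ref{subsec-dyncomm}. Indeed, $\phi(N_{k}\cap H)$ has finite index in $K$, so it contains some $N_{j}$; it is also contained in some $N_{k'}$, because a subgroup of index at most $k$ in $H$ maps to one of bounded index in $K$. The same holds for $\phi^{-1}$. Hence $\Theta_{\cG}$ is onto $\Comm(\G)$. Theorem~\ref{thm-summary} then gives $\MCG(\whM;\whx)\cong\Comm(X_{\cG_u},\G,\Phi,x)\cong\Comm(\G)$. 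Next, the chain consists of normal subgroups, so $C_\ell=\G_\ell$, the discriminant $\fD_{\cG}$ is trivial by \eqref{eq-discriminant}, and $\fG_{\cG}=\whGamma$. Therefore $\whM$ is homogeneous, and Theorem~\ref{thm-homogeneous} (via Corollary~\ref{cor-homogeneous}) yields $\MCG(\whM)\cong\whGamma\times_\zeta\Comm(\G)$.

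The main obstacle is the verification of the zig-zag conditions for an arbitrary commensurator. Specifically, one must show that the image of the chain restricted to $H$ is interleaved with the chain in $K$. I would handle this by bounding indices: for $H'\subset H$ one has $[K:\phi(H')]=[H:H']\le[\G:H']$, and this bound gives containment of a characteristic $N_{k''}$. Everything else is a direct citation of earlier results.
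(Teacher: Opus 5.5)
Your proposal is correct and takes essentially the paper's route. Residual finiteness of the finitely generated nilpotent group gives $\ker(\cG_u)=\{e\}$ and hence a simply connected baseleaf; the nilmanifold is ultra Borel; every commensurator preserves the universal chain up to zig-zag equivalence; and the normal universal chain makes $\whM$ homogeneous, so Theorem~\ref{thm-universalthm} together with Theorem~\ref{thm-homogeneous} gives the result, exactly as the paper does by ``proceeding as in the abelian case.'' Your index-bound check of the zig-zag conditions just fills in the step the paper calls immediate; the containment $\phi(N_j\cap H)\subset N_k$ is cleanest via the finite-index subgroup $\phi^{-1}(N_k\cap K)$, which is your remark that the same holds for $\phi^{-1}$.
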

 
The calculation of $\Comm(\G)$   for a finitely generated nilpotent group $\G$ reduces to the calculation of the automorphisms of the rational Lie algebra ${\mathfrak g}_{\G}$ associated to the Mal'cev completion of $\G$. This is explained, for  example, in  \cite[Chapter 4]{Baumslag1971}. Thus, the calculation is  analogous to the abelian case discussed above, although the calculation of $\Aut({\mathfrak g}_{\G})$  can be much more subtle.

Another class of nilpotent solenoid examples uses the fact   that a  finitely generated torsion-free nilpotent group   admits an embedding  $\sigma \colon \G \to {\rm SL}(\mZ^m)$ for some $m > 0$ sufficiently large, as shown  by Jennings \cite{Jennings1955} (see also \cite{Formanek1970}). Fix such an embedding $\sigma$, and choose a prime $p > 0$. Then define, for each $\ell \geq 0$, the normal subgroup of finite index
\begin{equation}\label{eq-nilpotentfiltration}
\G_{\sigma,p,\ell} = \ker \left\{ {\rm mod}_p \circ \sigma \colon \G \to {\rm SL}(\mZ^m) \rightarrow {\rm SL}(\mZ^m/p^{\ell} \mZ^m) \right\} \ .
\end{equation}
This defines a normal group chain $\cG_{\sigma,p}$ with $\cap_{\ell > 0} \G_{\sigma, p, \ell} = \{e\}$. Let $\whGamma_{\cG_{\sigma,p}}$ denote the profinite group associated to the inverse limit of the quotients $\G/\G_{\G_{\sigma,p,\ell}}$.  The associated tower of coverings $\cP_{\sigma,p}$ of $M$ defines the solenoidal manifold  $\whM_{\sigma,p}$.
Then as before, we have:
\begin{thm}\label{thm-pnilmcg}
Let $M$ be a nilmanifold with $\G = \pi_1(M,x)$, $\sigma \colon \G \to  {\rm SL}(\mZ^n)$ an embedding, and $p > 1$ a prime. Then
$\MCG(\whM_{\sigma,p}) \cong \whGamma_{\cG_{\sigma,p}} \times_{\zeta} \Comm(X_{\cG_{\sigma,p}}, \G, \Phi, x)$. 
\end{thm}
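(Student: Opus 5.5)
The plan is to follow the template used for Theorem~\ref{thm-nilmcg}: reduce to Corollary~\ref{cor-homogeneous}. That requires checking three things: $M$ is ultra Borel, $\whM_{\sigma,p}$ is homogeneous, and its baseleaf is simply connected. The first is already recorded: a nilmanifold is aspherical with torsion-free nilpotent fundamental group, and it is ultra Borel (Remark~\ref{rmk-borelex} and Definition~\ref{def-stronglyborel}, via the infra-nilmanifold case and the uniqueness of the induced homeomorphism up to isotopy). So Theorem~\ref{thm-summary} applies once the leaf condition is verified.

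Next, the chain $\cG_{\sigma,p}$ consists of kernels of homomorphisms $\G \to {\rm SL}(\mZ^m/p^{\ell}\mZ^m)$, so each $\G_{\sigma,p,\ell}$ is normal in $\G$. Hence $C_\ell = \G_{\sigma,p,\ell}$ and $D_\ell = \G_\ell/C_\ell$ is trivial. By \eqref{eq-discriminant} the discriminant $\fD_{\cG}$ is trivial, so $\fM_{\cP}$ is homogeneous and $\fG_{\cG} = X_{\cG} = \whGamma_{\cG_{\sigma,p}}$. Also $\fN_{\cG}=\fA_{\cG}=\fG_{\cG}$, and $\G_{\fN}=\G$ acts via $\zeta$.

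For simple connectivity, $\pi_1(L_{\whx},\whx)$ is the isotropy of $x$ in $\G$, which equals $\ker(\cG)=\bigcap_\ell \G_{\sigma,p,\ell}$. I would show this is trivial as follows. If $\gamma\ne e$, then $\sigma(\gamma)\ne I$ because $\sigma$ is injective. Some entry of $\sigma(\gamma)-I$ is then a nonzero integer, and it is not divisible by $p^\ell$ for $\ell$ large. So $\gamma\notin\G_{\sigma,p,\ell}$ for large $\ell$. By Lemma~\ref{lem-simplyconnected} (or directly), the baseleaf is simply connected and the action is topologically free. Effectiveness follows as well, since the cores are the groups themselves. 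Alternatively, Corollary~\ref{cor-nilp} already gives regularity.

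With these hypotheses, Theorem~\ref{thm-summary} identifies $\MCG(\whM_{\sigma,p};\whx)\cong\Comm(X_{\cG_{\sigma,p}},\G,\Phi,x)$. Theorem~\ref{thm-nonhomogeneous2} (equivalently Theorem~\ref{thm-homogeneous}) then gives $\MCG(\whM_{\sigma,p})\cong \fG_{\cG}\times_\zeta \MCG(\whM_{\sigma,p};\whx)$. Combining the two yields the stated formula, as in Corollary~\ref{cor-homogeneous}.

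The only step with real content is the leaf computation, i.e.\ $\ker(\cG)=\{e\}$ via the faithfulness of $\sigma$ and the mod-$p^\ell$ separation. Everything else is bookkeeping from earlier results. One point to watch is that the statement writes ${\rm SL}(\mZ^n)$ while the construction uses $m$; the argument is independent of that size.
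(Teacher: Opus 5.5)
Your proposal is correct and follows the paper's route. The paper observes that $\cG_{\sigma,p}$ is a normal chain with $\bigcap_\ell \G_{\sigma,p,\ell}=\{e\}$ and then argues ``as before'' via Corollary~\ref{cor-homogeneous} (ultra Borel base, homogeneity, simply connected baseleaf); your mod-$p^\ell$ argument supplies the trivial-intersection check that the paper only asserts. One small correction: Lemma~\ref{lem-simplyconnected} runs in the opposite direction (simply connected $\Rightarrow$ $\ker(\cG)=\{e\}$), so for simple connectivity rely on your direct identification $\pi_1(L_{\whx},\whx)=\ker(\cG)$, and cite Lemma~\ref{lem-topfree} for topological freeness.
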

 The calculation of $\Comm(X_{\cG_{\sigma,p}}, \G, \Phi, x)$ is related to the representation theory associated with  the action of $\G$ on the $p$-localization $\whGamma_{\cG_{\sigma,p}}$ of the profinite completion, as studied  for example  in     \cite{Moore1965}.
 Some examples are calculated in \cite{HL2026a}.

 Recall that a group $\G$ is said to be (finitely) \emph{non-coHopfian} \cite{HLvL2021} if there exists a proper, finite-index  self-embedding $\vp \colon \G \to \G$. Such a map $\vp$ is called a \emph{renormalization} of $\G$, and we say that $\G$ is renormalizable. For example, homogeneous nilpotent groups are renormalizable \cite{Cornulier2016,Dere2017}, though there are many examples of renormalizable groups which are not nilpotent, as discussed in \cite{HLvL2021}.

Suppose that $\G$ admits a proper self-embedding $\vp \colon \G \to \G$. For $\ell \geq 0$, let $\G_{\ell} = \vp^{\ell}(\G) = \vp \circ \cdots \circ \vp(\G)$ be the image of the $\ell$-th  iteration of $\vp$. Also assume that $\cap_{\ell > 0} \G_{\ell} = \{e\}$.
Then the group chain  $\cG_{\vp} = \{\G_0 \supset \G_1 \supset \G_2 \supset \cdots\}$ is a \emph{strong scale} in the sense of \cite{NekrashevychPete2011}. 

Let $(X_{\vp}, \G, \Phi)$ denote the odometer associated to the chain $\cG_{\vp}$.
If the groups $\G_{\ell}$ are normal in $\G$, then van Limbeek showed in \cite{WvL2021} that $\G$ must be abelian. Thus, if $\G$ is non-abelian, then  the subgroups $\G_{\ell}$ are never normal, the associated discriminant group $\fD_{\cG}$ is not trivial, and so the inverse limit space $X_{\vp}$ is not a pro-finite group. 

Let $\whM_{\vp}$ denote the nilpotent solenoid associated to the coverings defined by the group chain $\cG_{\vp}$.  Then,   $\whM_{\vp}$ has a simply connected leaf, since $\ker \cG_\phi = \{e\}$. 
 By Theorem \ref{thm-surjective} 
\begin{equation}
\MCG(\whM_{\vp} ; \whx) \cong \Comm(X_{\vp}, \G, \Phi, x) \ .
\end{equation}
The map $\vp$ induces a self map $\widetilde{\vp} \colon M \to M$, which in turn defines $\whvp \in \Homeo(\whM_{\vp} ; \whx)$ so that the group $\Comm(X_{\vp}, \G, \Phi, x)$ always contains at least a copy of $\mZ$. However, as with the abelian case analyzed by Sabitova \cite{Sabitova2024b},   there are many more ``exotic'' classes in $\MCG(\whM_{\vp})$ which arise from number theoretic properties of the map $\vp$, as discussed in   \cite{HL2026a}. 
 
 The calculation of $\MCG(\whM_{\vp})$ from $\MCG(\whM_{\vp} ; \whx)$ using  Corollary~\ref{cor-nonhomogeneous} depends on understanding the normalizer $\fN_{\cG}$, which is not a straightforward problem in general.

 \subsection{Hyperbolic solenoids}\label{subsec-hyperbolic}
 Let $\Sigma_g$ denote the closed orientable surface of genus $g \geq 2$,  let  $x \in \Sigma_g$ be a basepoint, and then set $\G_g = \pi_1(\Sigma_g, x)$. 
Theorem~\ref{thm-odden}   by Odden  together with Theorem \ref{thm-universalthm}    states that  the characteristic map $\chi$ yields  an isomorphism $\MCG(\widehat{\Sigma_g})  \cong \whG_g \times_{\zeta} \Comm(\G_g)$, where   $\widehat{\Sigma_g}$ is the universal hyperbolic solenoid and $\whG_g$ is the profinite completion of $\G_g$. 
 This result can be generalized in two directions. The following follows from  Theorem~\ref{thm-surjective}.
 \begin{thm}
Let $\cG$ be a group chain in $\G_g$ such that $\ker(\cG) = \{e\}$.  Then $\MCG(\widehat{\Sigma_g} ; \whx) \cong \Comm(X_{\cG}, \G_g, \Phi, x)$.
\end{thm}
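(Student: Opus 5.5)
The plan is to obtain this statement as a direct specialization of Theorem~\ref{thm-surjective} (equivalently Theorem~\ref{thm-summary}), checking its two hypotheses for $M_0 = \Sigma_g$ and the chain $\cG$. Here $\widehat{\Sigma_g}$ is read as the solenoid $\fM_{\cP}$ defined by the presentation $\cP_{\cG}$, with monodromy odometer $(X_{\cG}, \G_g, \Phi)$ and basepoint $\whx$ given by $x = \bigcap_\ell U_\ell$.

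First, I would verify that $\Sigma_g$ is ultra Borel. Every finite cover of $\Sigma_g$ is again a closed orientable surface of genus $\geq 2$, hence aspherical. A homotopy equivalence between such surfaces is homotopic to a homeomorphism by the Dehn--Nielsen--Baer theorem. Moreover, homotopic homeomorphisms of surfaces are isotopic (Baer--Epstein), which gives the required uniqueness up to isotopy. Together these show that $\Sigma_g$ is strongly Borel with the uniqueness clause of Definition~\ref{def-stronglyborel}. This is also the case listed in Remark~\ref{rmk-borelex}, as closed surfaces carry metrics of negative curvature.

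Second, I would verify that the baseleaf is simply connected. In the suspension model $\fM_{\Phi} = \G_g\backslash(\wtM_0 \times X_{\cG})$, the leaf $L_{\whx}$ is the image of $\wtM_0 \times\{x\}$. Its fundamental group is the isotropy group $\{\gamma \mid \gamma\cdot x = x\}$, and this equals $\bigcap_\ell \G_\ell = \ker(\cG)$, since $\gamma$ fixes $x$ exactly when $\gamma \in \G_\ell$ for all $\ell$. The hypothesis $\ker(\cG)=\{e\}$ then makes $L_{\whx}$ a quotient of $\wtM_0$ by the trivial group, so $L_{\whx}\cong \mathbb{H}^2$ is simply connected. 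This is the converse direction to Lemma~\ref{lem-simplyconnected}, and it follows from the same computation. Lemma~\ref{lem-topfree} gives topological freeness, and hence coherence and effectiveness, of the odometer, so $\Theta_{\cG}$ exists by Theorem~\ref{thm-gottalift}.

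With both hypotheses in place, Theorem~\ref{thm-surjective} states that $\sigma_{\cP,\whx}\colon \MCG(\fM_{\cP};\whx)\to \Comm(X_{\cG},\G_g,\Phi,x)$ is an isomorphism, which is the claim. The only point requiring care is the uniqueness-up-to-isotopy part of the ultra Borel property for all finite covers. I would handle it by citing the classical surface results above, applied uniformly since every cover is again a hyperbolic surface.
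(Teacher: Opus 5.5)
Your proposal is correct and is essentially the paper's argument: the paper derives this result in one line as a consequence of Theorem~\ref{thm-surjective}. Your two checks supply the hypotheses the paper leaves implicit, namely that $\Sigma_g$ is ultra Borel (it is hyperbolic as in Remark~\ref{rmk-borelex}, or directly by Dehn--Nielsen--Baer and Baer--Epstein) and that $\pi_1(L_{\whx}) = \ker(\cG) = \{e\}$, so the baseleaf is simply connected.
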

  
 The second generalization extends the results of Bering and Studenmund in    \cite{BeringStudenmund2024}.
 Let $M$ be a closed connected $n$-manifold with a metric of negative sectional curvatures, for $n \geq 2$. Then $M$ is ultra  Borel. Let $x \in M$ and $\G = \pi_1(M, x)$.
  If the metric on $M$ has constant curvature, then $\G$ is linear; that is, $\G$ admits an embedding into a connected Lie group, and so $\G$ is residually finite.  For more general hyperbolic groups,   Tholozan and Tsouvalas  have constructed in \cite{TT2025} examples which are Gromov hyperbolic and not residually finite.  Thus, we assume that $\G$ is linear.

Let $\whGamma$ be the profinite completion of $\G$, then the universal odometer action of $\G$ on $\whGamma$ is effective as $\G$ is residually finite. We then obtain the corresponding result to Theorem 5.6 in   \cite{BeringStudenmund2024}:

 \begin{thm}\label{thm-negcomm}
Let $M$ be a closed connected manifold with constant negative curvature, and let $\G = \pi_1(M,x)$. Then
$\MCG(\widehat{M}) \cong \whGamma \times_{\zeta} \Comm(\G)$. 
\end{thm}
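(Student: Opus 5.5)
The plan is to realize $\MCG(\widehat{M})$ as an instance of the homogeneous case, Theorem~\ref{thm-homogeneous} together with Corollary~\ref{cor-homogeneous}, and then to identify the dynamical commensurator group with all of $\Comm(\G)$ using the universal chain, as in Theorem~\ref{thm-universalthm}.

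\textbf{Step 1: verify the hypotheses.} A closed manifold of constant negative curvature is a closed hyperbolic manifold. It is aspherical, and by Remark~\ref{rmk-borelex} it is ultra Borel, a property inherited by every finite cover. Its fundamental group $\G$ embeds as a lattice in ${\rm Isom}(\mathbb{H}^n)$, so it is linear, finitely generated and torsion free. Malcev's theorem then makes $\G$ residually finite. Take the characteristic chain $\cG_u(\G)$ of Section~\ref{subsec-universal}, so that $\widehat{M}=\fM_{\cP}$ with fiber $X_{\cG_u}\cong\whGamma$. Residual finiteness gives $\ker(\cG_u)=\bigcap_\ell \G_\ell=\{e\}$. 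Hence the baseleaf $L_{\whx}$, whose fundamental group is the isotropy $\{\gamma \mid \gamma\cdot x=x\}=\ker(\cG_u)$, is simply connected. Since the chain is normal, $\fD_{\cG_u}$ is trivial by \eqref{eq-discriminant}. Therefore $\widehat{M}$ is homogeneous, with $\fG_{\cG_u}=\fN_{\cG_u}=\fA_{\cG_u}=\whGamma$.

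\textbf{Step 2: apply the homogeneous theorems.} Corollary~\ref{cor-homogeneous} now gives
$\MCG(\widehat{M})\cong \whGamma\times_\zeta \Comm(X_{\cG_u},\G,\Phi,x)$. It also gives, through Theorem~\ref{thm-summary}, that $\Theta_{\cG_u}\colon\Comm(X_{\cG_u},\G,\Phi,x)\to\Comm(\G)$ is injective and satisfies $\chi=\Theta_{\cG_u}\circ\sigma_{\cP,\whx}$.

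\textbf{Step 3: show $\Theta_{\cG_u}$ is surjective.} This is the step I expect to carry the real content, and also the main obstacle. Given $\phi\colon H\to K$, I will check the three zig-zag conditions of Section~\ref{subsec-dyncomm}.
\begin{enumerate}
\item The subgroups $N_k=\bigcap_{[\G:L]\le k}L$ are characteristic in $\G$. Moreover, any finite-index subgroup $L$ contains some $N_k$, namely $N_{[\G:L]}$.
\item Choose $\ell_0$ with $\G_{\ell_0}\subset H$. For $\ell\ge\ell_0$ the image $\phi(\G_\ell)$ has finite index in $K$, hence in $\G$. So it contains some $\G_j$.
\item In the other direction, $\phi^{-1}(\G_k\cap K)$ has finite index in $H$. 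So it contains some $\G_\ell$, and then $\phi(\G_\ell)\subset\G_k$.
\end{enumerate}
These give the chain condition and the basepoint condition. Applying the same argument to $\phi^{-1}$ gives the local homeomorphism condition, so each $[\phi]$ is realized by a germ $h_\phi$ with $\Theta_{\cG_u}[h_\phi]=[\phi]$.

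The subtle point is that these checks must produce an honest isomorphism $\Theta_U$ of restricted holonomy groups on adapted sets. I will handle this by taking $U=U_\ell$ and $V=h_\phi(U_\ell)$, using topological freeness (Lemma~\ref{lem-simplyconnected}) so that $\Phi_U$ and $\Phi_V$ are injective and $\Theta_U$ is just conjugation by $\phi$. I will also need to confirm that $V$ is adapted, via \cite[Lemma~3.4]{CHL2019a}. The homeomorphism realizing the germ is then the suspension construction \eqref{eq-twisted} built from the ultra Borel homeomorphism $M_H\to M_K$.

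\textbf{Conclusion.} Combining Steps 2 and 3, $\Comm(X_{\cG_u},\G,\Phi,x)\cong\Comm(\G)$, and therefore $\MCG(\widehat{M})\cong\whGamma\times_\zeta\Comm(\G)$, with $\G_{\fN}=\G$ acting through $\zeta$.
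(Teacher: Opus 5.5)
Your proposal is correct and follows essentially the same route as the paper. Residual finiteness makes the universal chain give a homogeneous solenoid with simply connected baseleaf. Corollary~\ref{cor-homogeneous}, via the ultra Borel property of Remark~\ref{rmk-borelex}, then reduces $\MCG(\widehat{M})$ to $\whGamma\times_\zeta\Comm(X_{\cG_u},\G,\Phi,x)$, and the zig-zag check for the characteristic chain identifies this with $\Comm(\G)$, as in Theorem~\ref{thm-universalthm}. Your Step~3 spells out the verification the paper calls immediate, and your use of linearity plus Malcev's theorem is a cleaner justification of residual finiteness than the paper's remark about embedding in a connected Lie group.
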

\proof
For the universal odometer action, we have $\Comm(X_{\cG}, \G, \Phi, x) \cong \Comm(\G)$, and the map $\chi$ is then an isomorphism by Remark~\ref{rmk-borelex}.
\endproof


\end{document}